\newtheorem{theorem}{Theorem}[section]
\newtheorem{lemma}[theorem]{Lemma}
\newtheorem{conjecture}[theorem]{Conjecture}
\newtheorem{proposition}[theorem]{Proposition}
\newcommand{\cliques}[1]{\K(#1)}
\newcommand{\tcliques}[2]{\K_{#1}(#2)}
\newcommand{\maximalcliques}[1]{\tcliques{\text{max}}{#1}}
\newcommand{\bc}{\mathbf{c}}
\newcommand{\cH}{\mathcal{H}}
\newcommand{\cX}{\mathcal{X}}
\newcommand{\cK}{\mathcal{K}}
\newcommand{\R}{\mathbb{R}}
\newcommand{\tcoveropt}[1]{\mathcal{C}\mathcal{C}_t(#1)}
\newcommand{\tcoveroptfrac}[1]{\mathcal{C}\mathcal{C}^*_t(#1)}
\newcommand{\turangraph}[2]{T_{#1,#2}}
\newcommand{\coveropt}[2]{{\mathcal{C}\mathcal{C}_{#1}(#2)}}
\newcommand{\coveroptfrac}[2]{\mathcal{C}\mathcal{C}^*_{#1}(#2)}
\newcommand{\decompositionopt}[2]{\mathcal{C}\mathcal{D}_{#1}(#2)}
\newcommand{\decompositionoptfrac}[2]{\mathcal{C}\mathcal{D}^*_{#1}(#2)}
\newcommand{\floor}[1]{\left\lfloor#1\right\rfloor}
\newcommand{\ceil}[1]{\left\lceil#1\right\rceil}
\newcommand{\ep}{\varepsilon}
\newcommand{\K}{\mathcal{K}}
\newcommand{\norm}[1]{\left\lVert#1\right\rVert}
\newcommand{\pr}{\mathbb{P}}
\newcommand{\E}{\mathbb{E}}
\begin{document}

\title{Clique covers and decompositions of cliques of  graphs}

\author{
    J\'ozsef Balogh\footnote{
    Department of Mathematics, University of Illinois at Urbana--Champaign, Urbana, IL, USA. Research supported in part by NSF grants DMS-1764123 and RTG DMS-1937241, FRG DMS-2152488, the Arnold O.~Beckman Research Award (UIUC Campus Research Board RB 24012), and the Simons Fellowship. E-mail: jobal@illinois.edu.}
 \and
    Jialin He\footnote{Department of Mathematics, Hong Kong University of Science and Technology, Clear Water Bay, Kowloon 999077, Hong Kong. Partially supported by Hong Kong RGC grant GRF 16308219 and Hong Kong RGC grant ECS 26304920. Email: majlhe@ust.hk.} \and
    Robert A. Krueger\footnote{Department of Mathematical Sciences, Carnegie Mellon University, Pittsburgh, PA, USA. Research supported by NSF Awards DGE 21-4675 and DMS-2402204. Email: rkrueger@andrew.cmu.edu.} \and
    The Nguyen\footnote{Department of Mathematics, University of Illinois at Urbana--Champaign, Urbana, IL, USA. Email: thevn2@illinois.edu.} \and
    Michael C. Wigal\footnote{Department of Mathematics, University of Illinois at Urbana--Champaign, Urbana, IL, USA. Research supported in part by NSF RTG DMS-1937241 and an AMS-Simons Travel Grant. Email: wigal@illinois.edu}
}
    
\date{\today}

\maketitle

\begin{abstract}
In 1966, Erd\H{o}s, Goodman, and P{\'o}sa showed that if $G$ is an $n$-vertex graph, then at most $\lfloor n^2/4 \rfloor$ cliques of $G$ are needed to cover the edges of $G$, and the bound is best possible as witnessed by the balanced complete bipartite graph. This was generalized independently by Gy{\H o}ri--Kostochka, Kahn, and Chung, who showed that every $n$-vertex graph admits an edge-decomposition into cliques of total `cost' at most $2 \lfloor n^2/4 \rfloor$, where an $i$-vertex clique has cost $i$. Erd\H{o}s suggested the following strengthening: every $n$-vertex graph admits an edge-decomposition into cliques of total cost at most $\lfloor n^2/4 \rfloor$, where now an $i$-vertex clique has cost $i-1$. We prove fractional relaxations and asymptotically optimal versions of both this conjecture and a conjecture of Dau, Milenkovic, and Puleo on covering the $t$-vertex cliques of a graph instead of the edges. Our proofs introduce a general framework for these problems using Zykov symmetrization, the Frankl-R\"odl nibble method, and the Szemer\'edi Regularity Lemma.\\

\textbf{Mathematics Subject Classification (2020):} 05B40
 05C69 05C70 05C72
\end{abstract}

\section{Introduction}\label{sec:intro}

A \emph{$t$-clique}, denoted by $K_t$, is a complete graph on $t$ vertices. A \emph{$t$-clique cover (decomposition)} of a graph $G$ is a collection of cliques that `covers' (`partitions') all $t$-cliques of $G$, that is, every $t$-clique in $G$ is a subgraph of some (exactly one) clique from the clique cover (decomposition). A \emph{cost vector} $\mathbf{c}$ is a sequence of nonnegative real numbers $\mathbf{c} = (c_i)_{i=1}^\infty$. Let $\tcoveropt{G,\mathbf{c}}$ ($\decompositionopt{t}{G,\mathbf{c}}$) denote the lowest cost $t$-clique cover (decomposition) of $G$, where the cost of a cover (decomposition) is the sum of the costs of the cliques in the cover (decomposition). When $\mathbf{c}$ is the constant $1$ cost vector, we write $\tcoveropt{G,\mathbf{c}} = \tcoveropt{G}$ ($\decompositionopt{t}{G,\mathbf{c}} = \decompositionopt{t}{G})$.

 It is easy to see that $\coveropt{1}{G}$ is the chromatic number of the complement of $G$. The graph parameter $\coveropt{2}{G}$ is well-studied~\cite{EGP,L,R,ST} and also known as the intersection number of $G$: $\coveropt{2}{G}$ is the minimum $k$ such that there exists $\phi: V(G) \to 2^{[k]}$ such that $uv \in E(G)$ if and only if $\phi(u) \cap \phi(v) \neq \emptyset$. Recently, Dau, Milenkovic, and Puleo~\cite{DMP} introduced $\coveropt{t}{G}$ for $t>2$, motivated by community detection problems.

We are interested in the following extremal question: what is the maximum value of $\coveropt{t}{G,\mathbf{c}}$ ($\decompositionopt{t}{G,\mathbf{c}}$) over all $n$-vertex graphs $G$? Our starting point is a result of Erd{\H o}s, Goodman, and P{\'o}sa~\cite{EGP}.
\begin{theorem}[Erd{\H o}s, Goodman, and P{\'o}sa~\cite{EGP}]\label{thm:EGP}
If $G$ is an $n$-vertex graph, then $\coveropt{2}{G} \leq \floor{n^2/4}$, and equality holds if and only if $G$ is a balanced complete bipartite graph. Furthermore, if $G$ is an $n$-vertex graph, then $\decompositionopt{2}{G,\mathbf{c}} \leq \floor{n^2/4}$, where $c_2 = c_3 = 1$ and $c_i = \infty$ for all $i \geq 4$.
\end{theorem}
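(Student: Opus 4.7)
The plan is to prove the stronger decomposition statement first, which immediately implies the cover bound since the cost vector assigns $1$ to each edge and triangle and any decomposition is also a cover. So it suffices to show that every $n$-vertex graph $G$ admits an edge-decomposition into at most $\lfloor n^2/4 \rfloor$ pieces, each a $K_2$ or $K_3$. I proceed by induction on $n$, with trivial base cases $n \leq 2$.

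Let $v$ be a vertex of minimum degree in $G$. In the easy case $d(v) \leq \lfloor n/2 \rfloor$, I apply the induction hypothesis to $G - v$ and cover the edges at $v$ with $d(v)$ individual edges; the identity $\lfloor (n-1)^2/4 \rfloor + \lfloor n/2 \rfloor = \lfloor n^2/4 \rfloor$ closes this case. The main obstacle is the dense case $\delta(G) > \lfloor n/2 \rfloor$, where naive deletion of $v$ overspends. Here I exploit density: pick a maximum matching $M$ in $G[N(v)]$, and for each $uw \in M$ use the triangle $\{v,u,w\}$, which absorbs the two edges $vu, vw$ incident to $v$ and the matching edge $uw$ all at cost $1$. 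The remaining $d(v) - 2|M|$ edges at $v$ are covered individually, giving $d(v) - |M|$ pieces around $v$; induction applied to $G - v$ (with the $M$-edges removed) contributes at most $\lfloor (n-1)^2/4 \rfloor$ further pieces. The total stays within $\lfloor n^2/4 \rfloor$ precisely when $|M| \geq d(v) - \lfloor n/2 \rfloor$.

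To verify this matching bound, note that $N(v) \setminus V(M)$ is independent in $G$ by the maximality of $M$. Every $u \in N(v)$ satisfies $d_G(u) \geq d(v)$ by the choice of $v$, hence has at most $n - 1 - d(v)$ non-neighbors in $V(G) \setminus \{u\}$; so any independent subset of $N(v)$ containing $u$ has size at most $1 + (n - 1 - d(v)) = n - d(v)$. Thus $d(v) - 2|M| \leq n - d(v)$, and by integrality $|M| \geq d(v) - \lfloor n/2 \rfloor$, completing the induction. The equality characterization for $\coveropt{2}{G}$ requires a further analysis: the existence of any triangle in $G$ permits a strict improvement in the inductive step (the triangle replaces three edge-pieces by a single one), forcing every extremal $G$ to be triangle-free, at which point $\coveropt{2}{G} = e(G)$ and Mantel's theorem identifies the balanced complete bipartite graph as the unique extremal example.
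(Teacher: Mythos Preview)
This theorem is quoted from Erd\H{o}s--Goodman--P\'osa as background; the paper does not supply its own proof, so there is nothing to compare against directly. Your argument for the upper bound is the classical one and is correct: the choice of a minimum-degree vertex $v$, the matching $M$ in $G[N(v)]$, and the independence bound $|N(v)\setminus V(M)|\le n-d(v)$ together give the decomposition into at most $\lfloor n^2/4\rfloor$ edges and triangles, and the cover bound follows.

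The equality analysis, however, has a genuine gap. You argue that ``the existence of any triangle in $G$ permits a strict improvement in the inductive step (the triangle replaces three edge-pieces by a single one)''. But your induction is for the $\{K_2,K_3\}$-\emph{decomposition}, and there strict improvement in the presence of a triangle is simply false: $K_4$ has triangles yet needs exactly $4=\lfloor 4^2/4\rfloor$ pieces in any edge/triangle decomposition (any two triangles in $K_4$ share an edge). So you cannot read off the cover equality characterization from the decomposition induction as written.

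A correct route is to run the equality analysis for the \emph{cover} inequality $\coveropt{2}{G}\le \coveropt{2}{G-v}+d(v)-|M|$ directly. If $\coveropt{2}{G}=\lfloor n^2/4\rfloor$, equality forces $\coveropt{2}{G-v}=\lfloor(n-1)^2/4\rfloor$ and $d(v)-|M|=\lfloor n/2\rfloor$. By induction $G-v$ is the balanced complete bipartite graph $K_{A,B}$, hence triangle-free; writing $a=|N(v)\cap A|$, $b=|N(v)\cap B|$ one has $|M|=\min(a,b)$ and $\max(a,b)=\lfloor n/2\rfloor$, and comparing degrees in $G$ against $\delta(G)=a+b$ forces $\min(a,b)=0$. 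This makes $G$ complete bipartite with parts $A$ and $B\cup\{v\}$ (or symmetrically), which is balanced. Your Mantel step then handles the triangle-free case. The missing piece is precisely this analysis of how $v$ attaches to $K_{A,B}$; the one-line ``replace three edges by a triangle'' heuristic does not suffice.
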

In other words, it is the hardest to cover the edges of a graph with cliques when the graph is triangle-free and the number of edges is maximized with respect to this condition; furthermore, such a covering by cliques may be replaced by decomposition into edges and triangles. Bollob\'as~\cite{Bollobas}, proving a conjecture of Erd\H{o}s, extended Theorem~\ref{thm:EGP} and determined the maximum number of edges and $K_r$'s needed to decompose the edges of any $n$-vertex graph.

Perhaps the most natural non-constant cost vector is $c_i = i$ for all $i$. Katona and Tarj\'an conjectured the following generalization of Theorem~\ref{thm:EGP}.
\begin{theorem}[Gy\H{o}ri and Kostochka~\cite{GyoriKostochka}, Kahn~\cite{Kahn}, Chung~\cite{Chung}]\label{thm:GyKKC}
Let $c_i = i$ for all $i$. If $G$ is an $n$-vertex graph, then $\decompositionopt{2}{G, \mathbf{c}} \leq 2 \cdot \floor{n^2/4}$, and equality holds if and only if $G$ is a balanced complete bipartite graph.
\end{theorem}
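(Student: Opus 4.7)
My plan is to prove the theorem by induction on $n$. Since $2\lfloor n^2/4 \rfloor - 2\lfloor (n-1)^2/4 \rfloor$ equals $n$ if $n$ is even and $n-1$ if $n$ is odd, the inductive step reduces, given an $n$-vertex graph $G$, to choosing a vertex $v$ and extending an inductively obtained decomposition of $G-v$ to one of $G$ with extra cost at most $n-1$ (or at most $n$ if $n$ is even).

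To extend, I would partition $N(v)$ into cliques $C_1, \ldots, C_s$ of the induced subgraph $G[N(v)]$ and use the cliques $\{v\}\cup C_1, \ldots, \{v\}\cup C_s$ to cover the edges between $v$ and $N(v)$. Naively this double-covers the edges inside each $C_i$ that the decomposition of $G-v$ already accounts for, so the plan is to strengthen the inductive hypothesis so that the $G-v$ decomposition can be chosen with the $C_i$'s appearing as constituent cliques; then replacing each $C_i$ by $\{v\}\cup C_i$ raises the cost by exactly $1$, and the edges from $v$ to any vertex outside $\bigcup_i C_i$ are handled by fresh $K_2$'s at cost $2$ apiece. The total extra cost becomes $s + 2(\deg(v) - \sum_i |C_i|)$, and one must show $v$ and the partition can be chosen so that this is at most $n - 1$ (or $n$).

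In the triangle-free case, $G[N(v)]$ has no edges, every $C_i$ must be a singleton, and the extra cost reduces to $2\deg(v)$. Choosing $v$ of minimum degree and applying Mantel's theorem $\delta(G) \le 2e(G)/n \le n/2$ closes the bound. The main obstacle is the case where $G$ has triangles near the extremal threshold: here one must genuinely exploit $|C_i|\ge 2$ to reduce $s$, which requires a delicate joint optimization between the inductive decomposition of $G-v$ and the clique partition of $N(v)$; making the strengthened induction hypothesis both strong enough to yield the required structural guarantee and still tractable is the technical heart of the proof. For the equality characterization, I would trace tightness back through the induction: equality forces $G$ to be triangle-free with $\deg(v) = n/2$ at every step, which combined with Tur\'an's theorem identifies $G$ as the balanced complete bipartite graph.
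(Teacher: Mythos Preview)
The paper does not prove this theorem; it is quoted from the literature (Gy\H{o}ri--Kostochka, Kahn, Chung) as background. The paper's own machinery---symmetrization to complete multipartite graphs (Theorem~\ref{thm:frac}), the LP bound (Theorem~\ref{thm:solve_LP}(ii), which does apply to $c_i=i$), and the regularity/nibble passage from fractional to integer (Theorem~\ref{thm:asymptotic_bound})---would only recover the asymptotic statement $\decompositionopt{2}{G,\mathbf{c}}\le 2\lfloor n^2/4\rfloor + o(n^2)$ and says nothing about the equality case. So there is no proof in the paper to compare your proposal against.

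Regarding your proposal itself: the outline is plausible, but the step you flag as the ``technical heart''---a strengthened induction hypothesis guaranteeing that some optimal decomposition of $G-v$ contains a prescribed clique partition $C_1,\dots,C_s$ of part of $N(v)$---is not a detail to be filled in later; it is the whole problem, and you have not said what the strengthened hypothesis is or why it survives the inductive step. There is no a priori reason an optimal decomposition of $G-v$ should contain any particular cliques inside $N(v)$, and forcing them to appear may push the cost of the $G-v$ decomposition above $2\lfloor (n-1)^2/4\rfloor$, breaking the induction. Your triangle-free analysis is fine, but the general case as written is a hope, not an argument. The published proofs (and McGuinness's greedy argument mentioned after Theorem~\ref{thm:GyKKC}) do not proceed via such a strengthened hypothesis; if you want to pursue induction on $n$, you would need to identify an invariant that is genuinely preserved, and that is not visible in your sketch.
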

Theorem~\ref{thm:GyKKC} generalizes Theorem~\ref{thm:EGP} in the following way. Observe that for $\mathbf{c}$ and $\mathbf{c'}$ with $c_i \leq c_i'$ for all $i \geq t$, we have that $\decompositionopt{t}{G,\mathbf{c}} \leq \decompositionopt{t}{G,\mathbf{c'}}$, $\coveropt{t}{G,\mathbf{c}} \leq \coveropt{t}{G,\mathbf{c'}}$, and $\coveropt{t}{G,\mathbf{c}} \leq \decompositionopt{t}{G,\mathbf{c}}$. Thus for the cost vector $c_i = i$ for all $i$ and the constant $1$ cost vector $\mathbf{1}$, we have $\coveropt{2}{G} 
 = \coveropt{2}{G,\mathbf{1}} = \frac{1}{2} \coveropt{2}{G,2\cdot \mathbf{1}} \leq \frac{1}{2} \decompositionopt{2}{G,\mathbf{c}}$, which is at most $\floor{n^2/4}$ by Theorem~\ref{thm:GyKKC}, implying the first part of Theorem~\ref{thm:EGP}. Later, McGuinness~\cite{McGuinness} showed that Theorem~\ref{thm:GyKKC} can be proved in a greedy manner. Recently, using the method of flag algebras and stability, in~\cite{BLPPPV,KLMP} the maximum value of $\decompositionopt{2}{G,\mathbf{c}}$ was determined for all $\mathbf{c}$ with $c_i = \infty$ for all $i \geq 4$, that is, when only edges and triangles are allowed in the decomposition. The case $c_2 = 2$ and $c_3 = 3$ is particularly challenging, as there are two different asymptotically extremal graphs: $K_{n/2,n/2}$ and $K_n$.

Erd\H{o}s suggested (see \cite[Problem 43]{Tuza} or \cite{Gyori}) the following strengthening of Theorem~\ref{thm:GyKKC}.
\begin{conjecture}[Erd\H{o}s]\label{conjerdos}
Let $c_i = i-1$ for all $i$. If $G$ is an $n$-vertex graph, then $\decompositionopt{2}{G,\mathbf{c}} \leq \floor{n^2/4}$.
\end{conjecture}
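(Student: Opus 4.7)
The plan is to prove Conjecture~\ref{conjerdos} by induction on $n$, with trivial base case $n \leq 3$. The arithmetic identity $\floor{n^2/4} - \floor{(n-1)^2/4} = \floor{n/2}$ governs the budget for removing a single vertex. Pick $v \in V(G)$ of minimum degree $\delta(G)$. In the easy regime $\delta(G) \leq \floor{n/2}$, apply the inductive hypothesis to $G - v$ to obtain a decomposition of cost at most $\floor{(n-1)^2/4}$, then extend by appending each of the $d(v)$ edges incident to $v$ as a $2$-clique of cost $1$. The total cost is at most $\floor{(n-1)^2/4} + d(v) \leq \floor{(n-1)^2/4} + \floor{n/2} = \floor{n^2/4}$, as required.

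In the difficult regime $\delta(G) > \floor{n/2}$, I would partition the vertex set $N(v)$ into a minimum number $s$ of cliques $C_1, \ldots, C_s$ of $G[N(v)]$ and aim to incorporate the cliques $\{v\} \cup C_i$ into the decomposition of $G$. Assuming the inductive decomposition of $G - v$ can be chosen to use each $C_i$ as one of its cliques, replacing each $C_i$ (cost $|C_i|-1$) by $\{v\} \cup C_i$ (cost $|C_i|$) adds $1$ per promotion and covers the star at $v$; the resulting decomposition of $G$ has cost $\decompositionopt{2}{G-v,\mathbf{c}} + s \leq \floor{(n-1)^2/4} + s$. Since $\delta(G) > \floor{n/2}$ forces each vertex of $N(v)$ to have at least $2\delta(G) - n$ neighbors in $N(v)$, the complement $\overline{G[N(v)]}$ has maximum degree at most $n - \delta(G) - 1$, so Brooks's theorem gives $s = \chi(\overline{G[N(v)]}) \leq n - \delta(G) \leq \floor{n/2}$; hence the total cost is at most $\floor{(n-1)^2/4} + \floor{n/2} = \floor{n^2/4}$.

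The main obstacle is ensuring the inductive decomposition of $G - v$ contains the prescribed cliques $C_1, \ldots, C_s$: the inductive hypothesis in the form of Conjecture~\ref{conjerdos} does not guarantee this. The plan is to prove simultaneously a strengthened statement of the form ``every graph $H$ admits a clique decomposition of cost at most $\floor{|V(H)|^2/4}$ that uses each clique from any prescribed vertex-disjoint family of cliques of $H$,'' analogous in spirit to the strengthenings used in the proof of Theorem~\ref{thm:GyKKC} by McGuinness~\cite{McGuinness}. Proving this strengthening appears to be the crux: the sharp cost $c_i = i-1$ leaves essentially no slack when the prescribed family saturates the extremal decomposition, and inductively swapping one $C_i$ into the decomposition of $H$ can cascade through many other cliques already used. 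Should the strengthening resist, a fallback is to decouple the difficult regime by a stability dichotomy around $K_{\floor{n/2},\ceil{n/2}}$, using the asymptotic and fractional results of this paper for graphs close to the extremal bipartite structure and a direct triangle-packing argument for graphs far from it; the transitional regime $\delta(G) \approx \floor{n/2}$ is expected to be the most delicate, since neither approach closes the gap there on its own.
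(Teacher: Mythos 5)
First, note that the statement you are proving is stated in the paper as an open conjecture: the paper itself proves only the asymptotic version (Theorem~\ref{thm:erdos_decomp}, giving $(1+o(1))n^2/4$ via symmetrization, the LP analysis of Lemma~\ref{lem:decompLP}, and the regularity/nibble transfer of Theorem~\ref{thm:asymptotic_bound}), and cites Gy\H{o}ri--Keszegh only for the $K_4$-free case. So there is no ``paper proof'' to match; your proposal must stand on its own, and it does not. Your easy regime $\delta(G) \leq \floor{n/2}$ is fine and standard, but the entire difficulty sits in the other regime, and there your argument rests on the strengthened inductive statement that every graph $H$ admits a decomposition of cost at most $\floor{|V(H)|^2/4}$ containing any prescribed vertex-disjoint family of cliques. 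As stated, that strengthening is false: take $H = K_4$ and prescribe the two edges of a perfect matching, say $12$ and $34$. Every triangle of $K_4$, and $K_4$ itself, contains one of these edges, so a decomposition using both prescribed edges must cover the remaining four edges individually, for total cost $2+4 = 6 > \floor{16/4} = 4$. Thus the key lemma needs at minimum a careful restriction on which prescribed families are allowed (e.g., clique partitions of a minimum-degree vertex's neighborhood), and identifying and proving such a restriction is exactly the open problem --- the analogous greedy/induction strategies of Gy\H{o}ri--Kostochka, Kahn, Chung and McGuinness work for $c_i = i$ precisely because that cost leaves slack that $c_i = i-1$ does not.

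Your fallback also does not close the gap. The machinery of this paper inherently loses additive errors of order $\rho n^2$: Theorem~\ref{thm:asymptotic_bound} and Lemma~\ref{lem:frac_to_int} only transfer fractional bounds up to $\rho n^t$, and Section~\ref{sec:gap} shows additive integrality-type gaps can be as large as $n^{2-o(1)}$, so no direct invocation of these results can yield the exact bound $\floor{n^2/4}$, even for graphs close to $K_{\floor{n/2},\ceil{n/2}}$. A genuine stability argument pinning down the exact extremal behavior (including the transitional regime $\delta(G) \approx \floor{n/2}$ you flag) would be new work beyond both your sketch and this paper. In short: the proposal is a reasonable plan of attack, but its crux is an unproven --- and, in its stated generality, false --- lemma, so the conjecture remains unresolved by it.
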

If true, Conjecture~\ref{conjerdos} is best possible, again as witnessed by the balanced complete bipartite graph. Conjecture~\ref{conjerdos} generalizes Theorem~\ref{thm:GyKKC} since 
$$\decompositionopt{2}{G,(i)} = 2 \cdot \decompositionopt{2}{G,(i/2)} \leq 2 \cdot \decompositionopt{2}{G,(i-1)},$$ because $i/2 \leq i-1$ for all $i \geq 2$. While the cost vector $c_i = i-1$ may seem arbitrary, it has some connection to the well-studied spanning forest polytope --- see the discussion at the end of Section~\ref{sec:LP}. Conjecture~\ref{conjerdos} has been solved for $K_4$-free graphs by Gy\H{o}ri and Keszegh~\cite{GyoriKeszegh}, but otherwise remains open. We asymptotically solve Conjecture~\ref{conjerdos}.
\begin{theorem}\label{thm:erdos_decomp}
Let $c_i = i-1$ for all $i$. If $G$ is an $n$-vertex graph, then $\decompositionopt{2}{G,\mathbf{c}} \leq (1+o(1))n^2/4$, where the $o(1)$ goes to $0$ as $n$ goes to infinity. That is, every $n$-vertex graph admits a partition of its edges into cliques $Q_1, Q_2, \dots$ such that $\sum_i (|V(Q_i)|-1) \leq (1+o(1)) n^2/4$.
\end{theorem}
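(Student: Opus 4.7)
The plan is to prove Theorem~\ref{thm:erdos_decomp} in two stages. First, I would bound the linear-programming relaxation $\decompositionoptfrac{2}{G,\bc}$ of $\decompositionopt{2}{G,\bc}$ (the LP assigning $x_K \ge 0$ to each clique $K$ of $G$, minimizing $\sum_K(|V(K)|-1)x_K$ subject to $\sum_{K \ni e} x_K = 1$ for every edge $e$), showing $\decompositionoptfrac{2}{G,\bc} \le n^2/4$ for every $n$-vertex graph $G$. Second, I would round a near-optimal fractional decomposition to an integral edge-decomposition using the Szemer\'edi Regularity Lemma and the Frankl--R\"odl nibble, losing only an $o(n^2)$ factor.

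For the fractional step, by LP duality it suffices to show that every edge weighting $y \in \R^{E(G)}$ satisfying $\sum_{e \in K} y_e \le |V(K)|-1$ for every clique $K$ of $G$ has $\sum_e y_e \le n^2/4$. I would prove this by Zykov symmetrization applied to $y$. Given non-adjacent vertices $u,v$ with $\sum_{e \ni u} y_e \ge \sum_{e \ni v} y_e$, delete $v$ and insert a clone $v'$ of $u$ (so $N(v') = N(u)$), setting $y'_{v'w} := y_{uw}$ for $w \in N(u)$ and $y'_e := y_e$ otherwise. Every clique $K'$ of the new graph $G'$ either avoids $v'$ (hence is a clique of $G$ and inherits feasibility) or has the form $\{v'\} \cup K$ with $K$ a clique in $N(u)$, in which case $\sum_{e \in K'} y'_e = \sum_{e \in \{u\} \cup K} y_e \le |V(K')|-1$; the total $\sum_e y'_e$ increases by $\sum_{e \ni u} y_e - \sum_{e \ni v} y_e \ge 0$. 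Iterating makes non-adjacency an equivalence relation, reducing to a complete multipartite graph $K_{a_1,\dots,a_r}$ on $y$-invariant parts, and a direct computation using the transversal-clique constraints $\sum_{\{i,j\} \subseteq S} z_{ij} \le |S|-1$ on the induced edge weights $z_{ij}$ shows that $\sum_e y_e \le n^2/4$, attained at $r = 2$ balanced.

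For the rounding step, I would apply the Szemer\'edi Regularity Lemma to obtain a partition $V_1 \cup \dots \cup V_k$ with almost all pairs $(V_i,V_j)$ being $\ep$-regular, form the weighted reduced graph $R$ on $[k]$ whose edge weights are the pair densities, and apply the fractional bound (in weighted form) to $R$ to produce a fractional clique decomposition of $R$ of total cost $(1+o(1))n^2/4$. Each fractional clique $K$ of $R$ with weight $x_K$ lifts, via the counting lemma for regular tuples, to a near-regular family of roughly $x_K \prod_{i \in V(K)} |V_i| \prod_{\{i,j\} \subseteq V(K)} d_{ij}$ cliques of $G$ with one vertex in each cluster of $V(K)$. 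Collecting all these lifted cliques yields an auxiliary hypergraph $\cH$ with ground set $E(G)$ whose near-regularity and small relative codegree meet the hypotheses of the Frankl--R\"odl nibble/Pippenger--Spencer theorem, giving an edge-disjoint family of cliques of total cost $(1+o(1))n^2/4$ covering all but $o(n^2)$ edges of $G$; the remaining edges (including edges in irregular pairs and inside clusters) are added as trivial $K_2$'s at cost $1$ each, preserving the overall bound. The main obstacle I expect is the quantitative coordination between the two stages: one must simultaneously verify the codegree hypotheses of the nibble and the cost control from the fractional LP, which requires taking $\ep \to 0$ slowly with $n$, choosing the reduced-graph density threshold carefully so that neither the discarded edges nor the cost error exceeds $o(n^2)$, and invoking the counting lemma with explicit error terms for every clique size appearing in the LP support.
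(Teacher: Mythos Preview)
Your two-stage plan---bound $\decompositionoptfrac{2}{G,\bc}$ by $n^2/4$ via Zykov symmetrization, then round via regularity and the nibble---matches the paper's strategy, and your dual-side symmetrization is a valid alternative to the paper's primal version (Lemma~\ref{lem:frac_sym}); the feasibility check you give for the cloned vertex is correct. The rounding outline parallels Section~\ref{sec:asymptotic}; one point you gesture at but should make explicit is that the counting lemma and nibble require the clique sizes in the LP support to be bounded independently of $\ep$, which the paper arranges by first passing to a subgraph with $\omega \le b$ (Lemma~\ref{lem:no_big_cliques}) before cleaning.

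The real gap is the ``direct computation'' at the complete multipartite endpoint. After reduction you must show $\sum_{i<j} a_i a_j z_{ij} \le n^2/4$ subject to $\sum_{\{i,j\}\subseteq S} z_{ij} \le |S|-1$ for every $S$, with \emph{no sign restriction} on $z$ (this is the dual of the decomposition LP, not the cover LP). With nonnegativity this is the forest polytope and a greedy/matroid argument works, but dropping it changes the extreme-point structure; the paper remarks at the end of Section~\ref{sec:LP} that it was unable to determine the dual optimum directly and that optimal solutions appear to depend on relations such as $x_1 - x_2 \ge x_2 - x_3$. The paper instead obtains the bound on the primal side by an inductive construction (Lemma~\ref{lem:decompLP}): assuming a feasible fractional decomposition on $k-1$ parts with $x_{k-1}$ and $x_k$ merged, split weights proportionally between the two new parts and then shift mass by a correction term $\xi$ so as to meet the new equality constraint on $\{k-1,k\}$, using the concavity hypothesis $2c_i \ge c_{i-1}+c_{i+1}$ to ensure the cost does not increase. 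You should either supply a genuine dual argument (which would be new) or switch to this primal construction.
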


We also study the extremal problem of maximizing $\coveropt{t}{G,\mathbf{c}}$ over all $n$-vertex graphs $G$ for $t > 2$. For $n \ge k$, denote by $\turangraph{n}{k}$ the \emph{Tur\'an graph}, the $n$-vertex complete $k$-partite graph with part sizes differing by at most one. Zykov~\cite{Z} generalized Tur\'an's Theorem~\cite{Turan} with a symmetrization argument, proving that the $K_k$-free $n$-vertex graph maximizing the number of $K_t$'s is $\turangraph{n}{k-1}$, provided $t < k$. Dau, Milenkovic, and Puleo~\cite{DMP} extended Theorem~\ref{thm:EGP} to $t=3$, proving that $\coveropt{3}{G} \leq \coveropt{3}{\turangraph{n}{3}}$, and (when $n \ge 3$) equality holds if and only if $G$ is isomorphic to $\turangraph{n}{3}$. They conjectured the following for larger $t$.
\begin{conjecture}[Dau, Milenkovic, and Puleo~\cite{DMP}]\label{conj:clique_cover}
For every $4\le t\le n$, for every $n$-vertex graph $G$, we have $\tcoveropt{G} \le \tcoveropt{\turangraph{n}{t}}$, with equality if and only if $G = \turangraph{n}{t}$. 
\end{conjecture}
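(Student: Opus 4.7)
Since the conjecture asks for an exact extremal result, my plan is to first prove the asymptotic version $\tcoveropt{G}\le(1+o(1))\tcoveropt{\turangraph{n}{t}}$ together with an exact bound on its LP relaxation, and to defer the uniqueness part to a stability step. Observe that in $\turangraph{n}{t}$ every $t$-clique is a maximal clique, so $\tcoveropt{\turangraph{n}{t}}$ is just the number of $t$-cliques in $\turangraph{n}{t}$, namely $\prod_{i=1}^t |V_i|=(1+o(1))(n/t)^t$.

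Step~1 is to introduce the LP relaxation $\tcoveroptfrac{G}$, which minimizes $\sum_K x_K$ over nonnegative weights on cliques $K$ of $G$ subject to $\sum_{K\supseteq T} x_K\ge 1$ for every $t$-clique $T$ of $G$, and to show $\tcoveroptfrac{G}\le\tcoveroptfrac{\turangraph{n}{t}}=\tcoveropt{\turangraph{n}{t}}$ by Zykov symmetrization. Given a pair of non-adjacent vertices $u,v$, I would compare the fractional cover values of the two twin-symmetrized graphs obtained by replacing $N(v)$ by $N(u)$ or vice versa, and use convexity of the LP value to show that one of the two symmetrizations does not decrease $\tcoveroptfrac{\cdot}$. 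Iterating reduces $G$ to a complete multipartite graph, and among complete multipartite $n$-vertex graphs a direct LP computation combined with AM-GM on transversal counts shows that $\tcoveroptfrac{\cdot}$ is maximized by $\turangraph{n}{t}$.

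Step~2 is the rounding inequality $\tcoveropt{G}\le(1+o(1))\tcoveroptfrac{G}$, which I would obtain from the Frankl-R\"odl semi-random nibble applied to the covering hypergraph $\cH$ whose vertices are the $t$-cliques of $G$ and whose hyperedges are the cliques of $G$ of size $\ge t$. The nibble yields a near-perfect cover of $\cH$ of weight $(1+o(1))\tcoveroptfrac{G}$ provided $\cH$ is approximately uniform and approximately regular with small codegrees. To achieve this, I would truncate to cliques of bounded size $C$, arguing that larger cliques contribute negligibly to an optimal LP solution, and then invoke Szemer\'edi's Regularity Lemma so that within regular tuples of parts the counts of $t$-cliques inside $(t+j)$-cliques are quasirandom, which supplies the necessary regularity and uniformity.

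Combining the two steps gives $\tcoveropt{G}\le(1+o(1))\tcoveroptfrac{G}\le(1+o(1))(n/t)^t$, matching $\tcoveropt{\turangraph{n}{t}}$ asymptotically. The hardest step is Step~2: the covering hypergraph is inherently non-uniform, since cliques come in all sizes $\ge t$, and the technical work lies in coupling the LP truncation with the regularity-based partition so that the nibble applies to an approximately regular, bounded-codegree hypergraph while the truncation cost remains $o(\tcoveroptfrac{G})$. The exact form of Conjecture~\ref{conj:clique_cover} with $\turangraph{n}{t}$ as the unique extremal graph would then require a separate stability argument, likely showing that any near-extremal $G$ is close in edit distance to $\turangraph{n}{t}$ and admits a local perturbation absorbing the $o(1)$ slack.
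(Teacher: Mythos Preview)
First, note that the paper does \emph{not} prove Conjecture~\ref{conj:clique_cover}; it only establishes the asymptotic version (Theorem~\ref{thm:clique_cover_asym}). Your proposal likewise arrives only at the asymptotic bound and defers the exact statement with uniqueness to an unspecified stability step, so at best you are matching what the paper actually proves, not the conjecture itself.

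For that asymptotic result, your overall architecture (Zykov symmetrization for the fractional problem, then Regularity Lemma plus Frankl--R\"odl nibble to round) coincides with the paper's Theorems~\ref{thm:frac}, \ref{thm:solve_LP}(i), and~\ref{thm:asymptotic_bound}. However, your Step~2 as written contains a genuine gap. You assert the rounding inequality $\tcoveropt{G}\le(1+o(1))\tcoveroptfrac{G}$ for the \emph{same} graph $G$, but this is precisely Conjecture~\ref{conj:gap} in the paper and is left open. The difficulty is that the fractional cover number is not robust under edge deletion in the needed direction: the paper notes that $\coveroptfrac{2}{K_{1,1,n}}=n$ while $\coveroptfrac{2}{K_{2,n}}=2n$, so passing to a cleaned subgraph can \emph{increase} $\tcoveroptfrac{\cdot}$ substantially, and there is no analogue of Lemma~\ref{lem:remove_edges_decomposition} for covers. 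Your sentence ``truncate to cliques of bounded size $C$, arguing that larger cliques contribute negligibly to an optimal LP solution'' is exactly the step that fails: for decompositions the paper carries this out in Lemma~\ref{lem:remove_edges_decomposition}, but for covers no such truncation is available.

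The paper sidesteps this by never comparing $\tcoveropt{G}$ with $\tcoveroptfrac{G}$. Instead, Theorem~\ref{thm:asymptotic_bound} produces a \emph{different} $n$-vertex graph $\tilde{G}$ (obtained by bounding $\omega$ via Lemma~\ref{lem:no_big_cliques} and then cleaning via Lemma~\ref{lem:cleaning}) with $\tcoveropt{G}\le\tcoveroptfrac{\tilde{G}}+\rho n^t$; one then applies symmetrization and the LP bound to $\tilde{G}$, not to $G$. If you reorganize Step~2 so that the regularity/cleaning is applied \emph{before} taking the fractional relaxation, your argument lines up with the paper's. As stated, though, your Step~2 claims something the paper explicitly cannot prove.
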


It is easy to calculate $\tcoveropt{\turangraph{n}{t}}$ explicitly, since the largest cliques in $\turangraph{n}{t}$ are $t$-cliques, and hence each must be taken in the cover. Dau, Milenkovic, and Puleo~\cite{DMP} proved Conjecture~\ref{conj:clique_cover} for $t=3$ by induction on $n$. 
They noted that for larger $t$, the number of cases in their proof would be too large to handle effectively. With a different approach, we prove an asymptotic version of Conjecture~\ref{conj:clique_cover}.

\begin{theorem}\label{thm:clique_cover_asym}
For every fixed $t \geq 2$, we have $\tcoveropt{G} \leq (1+o(1))\tcoveropt{\turangraph{n}{t}}$ for every $n$-vertex graph $G$, where the $o(1)$ goes to $0$ as $n$ goes to infinity.
\end{theorem}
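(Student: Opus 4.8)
The plan is to prove Theorem~\ref{thm:clique_cover_asym} by a combination of Zykov symmetrization, the Frankl--R\"odl nibble, and the Szemer\'edi Regularity Lemma, following the framework advertised in the abstract. First I would reduce the covering problem to a fractional relaxation. Observe that a $t$-clique cover assigns a collection of cliques covering every $K_t$; relaxing to nonnegative weights on cliques that sum to at least $1$ over each $K_t$ gives the fractional parameter $\tcoveroptfrac{G}$, and the LP-duality/nibble philosophy says that as long as $G$ is ``spread out'' enough, $\tcoveropt{G} = (1+o(1))\tcoveroptfrac{G}$. The key technical tool here is the Frankl--R\"odl--Pippenger nibble (or a Kahn-type result on fractional vs.\ integral cover numbers of hypergraphs with small codegrees): build the hypergraph $\cH$ whose vertices are the $K_t$'s of $G$ and whose edges are the vertex sets of larger cliques; a near-optimal fractional cover of this hypergraph rounds to an integral cover of cost $(1+o(1))$ times as large, provided $\cH$ is nearly regular with small codegree. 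One must handle the dense parts of $G$ separately --- there one simply takes $V(G)$ itself or large cliques greedily, which is cheap --- so the nibble is applied on the sparse pieces obtained from a regularity partition.

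Next I would use the Szemer\'edi Regularity Lemma to reduce to a weighted/reduced graph problem. Apply the regularity lemma to $G$ to obtain a partition $V_1, \dots, V_k$ with $k$ bounded, almost all pairs $\ep$-regular, and form the reduced graph $R$ on $[k]$ where $ij \in E(R)$ if $(V_i,V_j)$ is $\ep$-regular of density above some threshold $d$. The $K_t$'s of $G$ essentially live inside the ``blow-up'' of $R$: up to a negligible $o(n^t)$ of them, every $K_t$ of $G$ is a transversal or near-transversal of some $t$-clique of $R$. Dually, a cheap $t$-clique cover of $G$ can be assembled from covers of each blown-up $t$-clique of $R$ plus a small correction term for irregular/sparse pairs. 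So the extremal question becomes: over all reduced graphs $R$ (with a density-weighting on edges), how expensive is it to cover the $K_t$'s living in the blow-up? Here is where Zykov symmetrization enters: I would argue that the cost of covering the blow-up is, up to $(1+o(1))$, a function only of the ``clique profile'' of $R$, and that symmetrizing $R$ --- merging two non-adjacent vertices of $R$ into a twin class, redistributing the weight --- does not decrease the $K_t$-count while not increasing the covering cost, so the worst case is when $R$ is a complete multipartite graph, i.e.\ $G$ is (close to) a blow-up of a Tur\'an graph $\turangraph{}{t}$. A short direct computation then shows that among blow-ups of $\turangraph{}{t}$-type graphs, the balanced one $\turangraph{n}{t}$ maximizes the covering cost asymptotically, since the number of $K_t$'s (each forced into the cover) is maximized by balancing.

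The main obstacle I expect is making the Zykov symmetrization argument compatible with the \emph{covering} cost rather than merely with the clique count. Symmetrization is classically used to show a clique count is maximized by a Tur\'an-type graph, but here we need the stronger statement that the entire extremal function $G \mapsto \tcoveropt{G}$ is (asymptotically) maximized there, which requires showing that the covering cost is ``monotone'' under the symmetrization step --- that collapsing two non-adjacent vertices into a blown-up twin class does not create a cheaper cover that has no analogue in the original graph, and conversely that a cheap cover of the symmetrized graph pulls back. This is delicate because covers are global objects: a clique in a cover of $G$ might straddle the two vertices being identified. I would handle this by passing through the fractional relaxation (where convexity/averaging arguments over the symmetrization are much cleaner: a fractional cover of $G$ averages to a fractional cover of the symmetrized graph of no greater cost, and for the reverse direction one blows up an optimal fractional cover of the Tur\'an-type reduced graph), proving the extremal bound at the fractional level first, and only then invoking the nibble to transfer back to integral covers with a $(1+o(1))$ loss. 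A secondary but routine obstacle is bookkeeping the several error terms (regularity error $\ep$, density threshold $d$, nibble error, the $o(1)$ from bounded $k$) so they can be sent to $0$ in the right order; this is standard but must be done carefully since $\tcoveropt{\turangraph{n}{t}}$ itself is only of order $n^{t}/t^{t}$ up to constants, so the error terms must be genuinely lower-order in $n$.
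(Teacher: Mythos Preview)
Your overall architecture matches the paper's: pass to the fractional problem, use Zykov symmetrization to show the fractional extremum is attained by a complete multipartite graph, solve that case directly, and use the Regularity Lemma together with the Frankl--R\"odl nibble to close the gap between fractional and integer covers. The paper packages these three steps as Theorems~\ref{thm:asymptotic_bound},~\ref{thm:frac}, and~\ref{thm:solve_LP}(i), after which Theorem~\ref{thm:clique_cover_asym} is a one-line corollary.

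However, you have the direction of the symmetrization inequality backward, and this is the heart of the argument. You write that ``a fractional cover of $G$ averages to a fractional cover of the symmetrized graph of no greater cost,'' which would yield $\coveroptfrac{t}{G_{\mathrm{sym}}} \le \coveroptfrac{t}{G}$ and hence that the multipartite graph is the \emph{minimizer} of the fractional cover number --- useless for an upper bound on $\coveropt{t}{G}$. The correct direction, carried out in Lemma~\ref{lem:frac_sym}, runs the other way: one takes optimal fractional covers $\bar f_0,\bar f_1$ of the \emph{two} symmetrized graphs $G(V_1\to V_0)$ and $G(V_0\to V_1)$ (first averaged over automorphisms permuting the clone class $V_0\cup V_1$), and splices them together to obtain a fractional cover of $G$ whose cost is a convex combination of $\|\bar f_0\|_{\mathbf{c}}$ and $\|\bar f_1\|_{\mathbf{c}}$. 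This gives $\coveroptfrac{t}{G} \le \max\{\coveroptfrac{t}{G(V_0\to V_1)},\coveroptfrac{t}{G(V_1\to V_0)}\}$, so at least one symmetrization direction does not \emph{decrease} the fractional cover number; iterating lands on a complete multipartite $H$ with $\coveroptfrac{t}{G}\le\coveroptfrac{t}{H}$. Your earlier clause ``not increasing the covering cost'' has the same sign error.

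One further point of divergence: the paper does \emph{not} prove $\coveropt{t}{G} \le \coveroptfrac{t}{G} + o(n^t)$ for arbitrary $G$ --- that is left open as Conjecture~\ref{conj:gap}. What Theorem~\ref{thm:asymptotic_bound} actually gives is $\coveropt{t}{G} \le \coveroptfrac{t}{\tilde G} + o(n^t)$ for a \emph{different} $n$-vertex graph $\tilde G$, obtained from $G$ by first bounding the clique number (Lemma~\ref{lem:no_big_cliques}) and then regularity-cleaning (Lemma~\ref{lem:cleaning}); the nibble (Lemma~\ref{lem:random_nibble}) is applied only to this cleaned graph, and the deleted edges are paid for separately. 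Symmetrization is then applied to $\tilde G$ itself, not to the reduced graph. Your proposal reads as though the nibble closes the integrality gap for $G$ directly, which is not what is established and would require an additional idea.
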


We provide a general framework for proving theorems like Theorems~\ref{thm:erdos_decomp} and~\ref{thm:clique_cover_asym}, where the cost vectors could be chosen almost arbitrarily.
In order to describe our approach in more detail, we introduce some notation. For a function $f : {\cal S} \to \mathbb{R}$, which we think of as an assignment of weights to the elements of $\cal S$, we let the \emph{total $\mathbf{c}$-cost} of $f$, denoted by $\norm{f}_\mathbf{c}$, be
\[ \norm{f}_\mathbf{c} = \sum_{S \in {\cal S}} c_{|S|}f(S) .\]
The \emph{support} of $f$ is the set of $S \in {\cal S}$ with $f(S) \neq 0$.

For a graph $G$, let $\cliques{G}$ denote the set of  cliques of $G$,  $\tcliques{\le t}{G}$  the set of  cliques of $G$ of size at most $t$, and  $\tcliques{t}{G}$  the set of  cliques of $G$ of size $t$. We may view a $t$-clique cover of $G$ as an assignment of weights $f : \cliques{G} \to \{0,1\}$ such that for every $T \in \tcliques{t}{G}$,
\begin{equation}\label{eq:covercond}
\sum_{\substack{T \subseteq K \in \cliques{G} }} f(K) \ge 1.
\end{equation}
In this formulation, for a cost vector $\mathbf{c}$, $\tcoveropt{G,\mathbf{c}}$ is the minimum total $\mathbf{c}$-cost of a $t$-clique cover of $G$. A \emph{fractional $t$-clique cover} is an assignment of \emph{nonnegative real} weights $f : \cliques{G} \to \mathbb{R}_{\ge 0}$ satisfying~\eqref{eq:covercond}. We let $\tcoveroptfrac{G,\mathbf{c}}$ denote the minimum total $\mathbf{c}$-cost of a fractional $t$-clique cover of $G$. By definition, $\tcoveroptfrac{G,\mathbf{c}} \le \tcoveropt{G,\mathbf{c}}$ for all $G$ and $\mathbf{c}$.

A \emph{(fractional) $t$-clique decomposition} is an assignment of weights $f: \cliques{G} \to \{0,1\}$ ($f : \cliques{G} \to \mathbb{R}_{\geq 0}$) such that equality holds in~\eqref{eq:covercond} for every $T \in \tcliques{t}{G}$. We let $\decompositionopt{t}{G,\mathbf{c}}$ be the minimum total $\mathbf{c}$-cost of a $t$-clique decomposition of $G$, and similarly, we let $\decompositionoptfrac{t}{G,\mathbf{c}}$ be the minimum total $\mathbf{c}$-cost of a fractional $t$-clique decomposition of $G$.

Our first general main theorem shows that the maximizer, at least for the fractional version, is always a complete multipartite graph.

\begin{theorem}\label{thm:frac}
Let $G$ be an $n$-vertex graph,  $t \ge 2$, and $\mathbf{c} = (c_i)_{i = 1}^{\infty}$ be a  sequence of non-negative real numbers. Then there exists $n$-vertex complete multipartite graphs $H, H'$ such that
\[ \coveroptfrac{t}{G,\mathbf{c}} \le \coveroptfrac{t}{H,\mathbf{c}} \]
and
\[ \decompositionoptfrac{t}{G,\mathbf{c}} \le \decompositionoptfrac{t}{H',\mathbf{c}} .\]
\end{theorem}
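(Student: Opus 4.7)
The plan is to apply Zykov-type symmetrization at the level of the LP duals. By LP duality, $\coveroptfrac{t}{G,\mathbf{c}}$ equals the maximum of $\sum_{T \in \tcliques{t}{G}} g(T)$ over $g : \tcliques{t}{G} \to \mathbb{R}_{\ge 0}$ satisfying $\sum_{T \subseteq K} g(T) \le c_{|K|}$ for every $K \in \cliques{G}$, while $\decompositionoptfrac{t}{G,\mathbf{c}}$ admits the same dual except that $g(T)$ is unrestricted in sign (the primal constraints being equalities). Given non-adjacent $u,v \in V(G)$, let $G_u$ denote the graph obtained from $G$ by replacing $v$'s neighborhood with $N_G(u)$ (so $u$ and $v$ remain non-adjacent and have identical neighborhoods in $G_u$), and let $G_v$ be the symmetric operation. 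I aim to show that at least one of $\coveroptfrac{t}{G_u,\mathbf{c}}, \coveroptfrac{t}{G_v,\mathbf{c}}$ is at least $\coveroptfrac{t}{G,\mathbf{c}}$; iterating preserves $n$ and eventually reaches a graph where non-adjacency is transitive, i.e., a complete multipartite graph, yielding $H$ (and similarly $H'$ for the decomposition version).

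Starting from an optimal dual $g$ on $G$, I would define $g'$ on $G_u$ by $g'(T) = g(T)$ when $v \notin T$ and $g'(\{v\}\cup S) = g(\{u\}\cup S)$ whenever $S$ is a $(t-1)$-clique of $G$ contained in $N_G(u)$. A direct computation yields
\[ \sum_T g'(T) \;-\; \sum_T g(T) \;=\; \sum_{T \ni u} g(T) \;-\; \sum_{T \ni v} g(T), \]
so at least one of $G_u, G_v$ carries a dual solution with value at least $\coveroptfrac{t}{G,\mathbf{c}}$. Feasibility of $g'$ on cliques $K$ of $G_u$ with $v \notin K$ is immediate (such $K$ is a clique of $G$ with the same $t$-subsets); for $K = \{v\}\cup R$ with $R \subseteq N_G(u)$, splitting the $t$-element subsets of $K$ according to whether they contain $v$ gives
\[ \sum_{T \subseteq K,\,|T|=t} g'(T) \;=\; \sum_{T \subseteq R,\,|T|=t} g(T) \;+\; \sum_{S \subseteq R,\,|S|=t-1} g(\{u\}\cup S) \;=\; \sum_{T \subseteq \{u\}\cup R,\,|T|=t} g(T) \;\le\; c_{|K|}, \]
since $\{u\}\cup R$ is a clique of $G$ with $|\{u\}\cup R| = |K|$. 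The decomposition case proceeds identically, as neither the value identity nor the feasibility check depends on the sign of $g$.

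Iterating the symmetrization produces a sequence of $n$-vertex graphs along which $\coveroptfrac{t}{\cdot,\mathbf{c}}$ (respectively $\decompositionoptfrac{t}{\cdot,\mathbf{c}}$) is monotone non-decreasing. The main obstacle I anticipate is proving termination at a complete multipartite graph rather than cycling when LP values tie. I plan to handle this with a secondary tie-breaker, for instance always symmetrizing toward the vertex of larger degree, so that $|E(G)|$ is weakly increasing (indeed $|E(G_u)| - |E(G)| = d_G(u) - d_G(v)$), and then refining further with $\sum_v d(v)^2$, giving a strict lexicographic potential at each step that changes the graph. Alternatively, one can bypass explicit iteration by choosing $H$ to maximize $\coveroptfrac{t}{H,\mathbf{c}}$ over $n$-vertex complete multipartite graphs and applying the symmetrization step above to a hypothetical counterexample $G$, producing a strictly better graph and a contradiction.
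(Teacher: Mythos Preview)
Your symmetrization via the LP dual is correct and is a genuinely different route from the paper. The paper works on the primal: it takes optimal fractional covers of the two symmetrized graphs, averages each over the automorphisms permuting $V_0\cup V_1$, and splices them into a fractional cover of $G$, exhibiting $\coveroptfrac{t}{G,\mathbf{c}}$ as a convex combination of the two symmetrized values. Your forward-push of an optimal dual $g$ is shorter, avoids the averaging step, and makes the decomposition case immediate since the sign of $g$ is never used. The feasibility check at $K=\{v\}\cup R$ via the $G$-clique $\{u\}\cup R$ is exactly right.

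Where your sketch is incomplete is termination. Your degree-based tie-break presumes you may choose the direction freely, but in general the direction is forced by the sign of $\Delta=\sum_{T\ni u}g(T)-\sum_{T\ni v}g(T)$; and even when $\Delta=0$, your proposed potential $(|E|,\sum_v d(v)^2)$ is not monotone under the step. Your ``alternative'' does not close the loop either, since a single symmetrization of a counterexample $G$ need not produce a complete multipartite graph, so comparing to the best such $H$ yields no contradiction. The cleanest fix, which is how the paper organizes the iteration, is to symmetrize an entire maximal clone class $V_0$ into a non-adjacent one $V_1$: your construction extends by setting $g'(\{w\}\cup S)=\tfrac{1}{|V_1|}\sum_{z\in V_1}g(\{z\}\cup S)$ for $w\in V_0$ and checking feasibility at $\{w\}\cup R$ by averaging the $G$-constraints at $\{z\}\cup R$ over $z\in V_1$; the number of maximal clone classes then strictly drops, so the process halts in at most $n$ steps. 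Alternatively, once you restrict to graphs achieving the global maximum $M^*$, your value identity forces $\Delta=0$ for every non-adjacent pair, so both $G_u$ and $G_v$ remain at $M^*$; then always symmetrize the smaller clone class into the larger and use $\sum_i|C_i|^2$ as a strictly increasing potential.
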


The proof of Theorem~\ref{thm:frac} relies on the following symmetrization operation. Given a graph $G$ and nonadjacent vertices $u,v \in V(G)$, we \emph{symmetrize} $u$ to $v$ by changing $N(u)$ to $N(v)$. A \emph{clone} of a vertex $u$ in $G$ is a vertex $v \in V(G)$ such that $u \neq v$, $uv \not \in E(G)$, and $N(u) = N(v)$. Note that after \emph{symmetrizing} $u$ to $v$, $u$ and $v$ are clones of each other. Let $U = \{u_1, \ldots, u_i\}$ be a set of pairwise clones of $G$ and $V = \{v_1,\ldots, v_j\}$ also be a set of pairwise clones of $G$ such that $u_1v_1 \not \in E(G)$. We say we \emph{symmetrize} $U$ to $V$ by deleting every edge incident with the vertices of $U$, and adding edges to $G$ such that $N(u_k) = N(v_1)$ for all $1 \le k \le i$. 

Zykov's proof~\cite{Z} of a generalization of Tur\'an's Theorem~\cite{Turan} shows that from every $K_{k+1}$-free graph $G$, one can obtain a $k$-partite graph on the same vertex set by a sequence of symmetrization operations such that during the entire process the number of $K_t$'s never decreases. This shows that $G$ has at most as many $K_t$'s as some complete $k$-partite graph, and among those graphs $\turangraph{n}{k}$ maximizes the number of $K_t$'s. We take a similar approach, reducing the fractional problem to a weighted version on cliques using Zykov symmetrization to prove Theorem~\ref{thm:frac}.

Using the symmetry of complete multipartite graphs, Theorem~\ref{thm:frac} reduces the number of constraints a fractional $t$-clique cover/decomposition must satisfy from the number of $t$-cliques of the graph to the number of $t$-sets of parts in the vertex partition. Unfortunately, no effective bound can be given on the number of parts. However, the problem of maximizing $\coveroptfrac{t}{H,\mathbf{c}}$ or $\decompositionoptfrac{t}{H,\mathbf{c}}$ over all $n$-vertex complete multipartite graphs $H$ has a particularly nice description as a set of linear programs (see Section~\ref{sec:LP}), which we  solve in \emph{ad hoc} ways. We do this for the cost vectors which are the subjects of Theorems~\ref{thm:erdos_decomp} and~\ref{thm:clique_cover_asym}.

\begin{theorem}\label{thm:solve_LP}
Let $H$ be an $n$-vertex complete multipartite graph, let $t \geq 2$, and let $\mathbf{c} = (c_i)_{i = 1}^{\infty}$ be a sequence of real nonnegative numbers.
\begin{enumerate}[i.]
    \item If $\mathbf{c}$ is such that $c_{i+1} - c_i \leq c_t$ for every $i \geq t$, then
\[ \coveroptfrac{t}{H,\mathbf{c}} \leq \coveroptfrac{t}{\turangraph{n}{t},\mathbf{c}} = \coveropt{t}{T_{n,t},\mathbf{c}} .\]
    \item If $\mathbf{c}$ is such that $2c_i \geq c_{i-1} + c_{i+1}$ for every $i \geq 2$, then
\[ \decompositionoptfrac{2}{H,\mathbf{c}} \leq \decompositionoptfrac{2}{\turangraph{n}{2},\mathbf{c}} = \decompositionopt{2}{\turangraph{n}{2},\mathbf{c}}. \]
\end{enumerate}
\end{theorem}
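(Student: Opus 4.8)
The plan is, in both parts, to exploit the symmetry of a complete multipartite graph $H = K_{a_1, \dots, a_m}$ (parts $V_1, \dots, V_m$ with $a_1 \ge \dots \ge a_m$, $\sum_i a_i = n$) to rewrite the problem as a compact linear program indexed by subsets of $[m]$, and then to solve that program by hand. Since $\mathrm{Aut}(H)$ acts transitively on the cliques $K$ with a fixed set of met parts $P(K) \subseteq [m]$ (modulo permuting equal parts), averaging any fractional $t$-clique cover over $\mathrm{Aut}(H)$ and letting $g_S$ be the total weight placed on cliques $K$ with $P(K) = S$, one gets that $\coveroptfrac{t}{H,\mathbf c}$ equals the minimum of $\sum_{|S|\ge t} c_{|S|}g_S$ subject to $g\ge 0$ and $\sum_{S \supseteq S_0} g_S \ge \prod_{i\in S_0} a_i$ for every $S_0 \in \binom{[m]}{t}$; for $\decompositionoptfrac{2}{H,\mathbf c}$ one replaces $\ge$ by $=$. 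For $T_{n,t}$ (where $m=t$) the only constraint is $g_{[t]}\ge\prod_i b_i$, so $\coveroptfrac{t}{T_{n,t},\mathbf c}=c_t\prod_i b_i=\coveropt{t}{T_{n,t},\mathbf c}$; and as $T_{n,2}$ is triangle-free, the only fractional $2$-clique decomposition is the one using every edge, so $\decompositionoptfrac{2}{T_{n,2},\mathbf c}=c_2\floor{n^2/4}=\decompositionopt{2}{T_{n,2},\mathbf c}$. Thus in each part it remains to exhibit, for arbitrary $H$, a feasible $g$ of cost at most the stated value.

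For part (i), I would use only the type-$t$ cliques together with the maximum cliques: put $g_{[m]} = \theta$ and $g_{S_0} = \max(\prod_{i\in S_0}a_i - \theta,\,0)$ for each $t$-set $S_0$, which is feasible with cost $\phi(\theta) = c_t\sum_{S_0}\max(\prod_{i\in S_0}a_i-\theta,0) + c_m\theta$. The function $\phi$ is piecewise linear and convex, and evaluating it at $\theta$ equal to the $(r{+}1)$-st largest of the values $\{\prod_{i\in S_0}a_i\}$ with $r=\lceil c_m/c_t\rceil$ gives $\coveroptfrac{t}{H,\mathbf c}\le c_t\sum_{\ell=1}^{r} w_{(\ell)}$, where $w_{(1)}\ge w_{(2)}\ge\cdots$ are those $t$-set products in decreasing order; here $r\le m-t+1$ since the hypothesis $c_{i+1}-c_i\le c_t$ telescopes to $c_m\le (m-t+1)c_t$. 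It then remains to prove $\sum_{\ell=1}^{r} w_{(\ell)} \le \prod_i b_i$, i.e.\ that the top few $t$-set products of $H$ sum to at most the number of $t$-cliques of $T_{n,t}$, which I would establish by a convexity/Zykov-symmetrization argument pushing the $a_i$ towards balance. When $\mathbf c = \mathbf 1$ — the case needed for Theorem~\ref{thm:clique_cover_asym} — this is immediate: take $\theta=\max_{S_0}\prod_{i\in S_0}a_i$, the product of the $t$ largest parts, so all $g_{S_0}$ vanish and the cost is that product, which is at most $\prod_i b_i$ because the $t$ largest parts sum to at most $n$ and the maximum product of $t$ positive integers with a prescribed sum is monotone in the sum.

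For part (ii), I would first use concavity to linearize: $2c_i\ge c_{i-1}+c_{i+1}$ for $i\ge 2$ forces $c_i \le c_1 + (i-1)(c_2-c_1) \le (i-1)c_2$ for $i\ge 2$, and since single vertices do not cover edges we may replace $\mathbf c$ by $\mathbf c'=((i-1)c_2)_i$ without decreasing $\decompositionoptfrac{2}{H,\mathbf c}$ and without changing $\decompositionopt{2}{T_{n,2},\mathbf c}=c_2\floor{n^2/4}$; after scaling it suffices to treat $c_i = i-1$, i.e.\ to prove the fractional form of Conjecture~\ref{conjerdos} for complete multipartite $H$. Here the type-LP constraints force $\sum_S \binom{|S|}{2} g_S = |E(H)|$, while the cost is $\sum_S(|S|-1)g_S$, so a $g$ concentrated on larger cliques is cheaper; accordingly I would build a feasible $g$ using transversal cliques on the larger parts, mixed with smaller transversal cliques to absorb the edges incident to the small parts (for unequal parts one cannot simply place uniform weight on the maximum cliques, as the parity of the part sizes obstructs it), and then bound the cost. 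The final numerical check that the resulting cost is at most $\floor{n^2/4}$ comes down to $|E(H)| = \binom n2 - \sum_i\binom{a_i}{2}$, the bound $\sum_i a_i^2 \ge n^2/m$, and the inequality $(m-2)^2\ge 0$. The main obstacle in both parts is exactly this last step — producing the explicit feasible $g$ and verifying the cost inequality — which is the "ad hoc" LP solving; the reduction to the type-LP and the linearization are routine.
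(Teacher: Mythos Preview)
Your reduction to the compact ``type-LP'' indexed by subsets of $[m]$ is correct and matches the paper's Lemma~\ref{lem:LP_reduction}, and your treatment of $T_{n,t}$ and $T_{n,2}$ is fine. The gap is exactly where you say it is---producing the explicit feasible $g$ and verifying the cost bound---and your sketched constructions do not work.

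For part~(i), your scheme using only the full set $[m]$ together with the $t$-sets is too coarse for general $\mathbf{c}$. Take $t=2$, $c_i=i-1$ (which satisfies $c_{i+1}-c_i = 1 = c_t$), and $H=K_{5,3,3,1}$, so $n=12$ and $m=4$. The six pair products are $15,15,9,5,3,3$, and your $\phi(\theta)=3\theta+\sum_{ij}\max(a_ia_j-\theta,0)$ has minimum value $39$ (attained on $\theta\in[5,9]$), while $\coveropt{2}{T_{12,2},\mathbf{c}}=c_2\cdot 36=36$. So no cover of your form meets the required bound; equivalently, the inequality you would need---that the top $\lceil c_m/c_t\rceil$ many $t$-set products sum to at most $\prod_i b_i$---is false here ($15+15+9=39>36$). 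The paper instead places weight on the \emph{nested chain} $\{1,\dots,j\}$ for $t\le j\le m$, setting $g_{\{1,\dots,j\}}=(a_j-a_{j+1})\prod_{i<t}a_i$ (with $a_{m+1}=0$); an Abel summation gives cost $\le c_t\bigl(\prod_{i<t}a_i\bigr)\bigl(\sum_{i\ge t}a_i\bigr)$, a product of $t$ integers summing to $n$ which is maximized at the Tur\'an partition. In the example this yields cost $35$. Your $\mathbf{c}=\mathbf{1}$ argument is correct and slightly slicker than specializing the paper's construction, but it does not extend.

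For part~(ii), the linearization to $c_i=i-1$ is valid and a pleasant simplification the paper does not make (it works directly with concave $\mathbf{c}$). But after that you have not produced a construction: you yourself note that uniform weight on transversals is not a decomposition when the parts are unequal, and your ``final numerical check'' using $|E(H)|\le n^2(m-1)/(2m)$ and $(m-2)^2\ge 0$ only bounds $2|E(H)|/m$, which is the cost of a decomposition supported purely on $m$-sets---an object that need not exist. The paper's argument (Lemma~\ref{lem:decompLP}) is an induction on $m$: merge the two smallest parts, take the inductive decomposition on $m-1$ parts, split each weight on sets containing the merged part proportionally between $\{m{-}1\}$ and $\{m\}$, and then shift a carefully chosen amount of weight from $I\cup\{m{-}1\}$ and $I\cup\{m\}$ to $I$ and $I\cup\{m{-}1,m\}$ to satisfy the new constraint for the pair $\{m{-}1,m\}$. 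Concavity $c_{i-1}-2c_i+c_{i+1}\le 0$ is used exactly to show this shift does not raise the cost, and the resulting bound is $c_2\max_{J}\bigl(\sum_{j\in J}a_j\bigr)\bigl(\sum_{j\notin J}a_j\bigr)\le c_2\lfloor n^2/4\rfloor$.
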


The conditions $c_{i+1} - c_i \le c_t$ and $2c_i \geq c_{i-1} + c_{i+1}$ are an artifact of the proof of Theorem~\ref{thm:solve_LP}. In particular, both conditions are satisfied when $c_i$ is constant, when $c_i = i$, and when $c_i = i-1$. Both conditions say that $\mathbf{c}$ cannot grow too quickly, which is a natural requirement: if a particular clique size is too expensive, then it would not be used. The first part of Theorem~\ref{thm:solve_LP} is proven with a fairly straightforward greedy approach, but the second part requires a more delicate induction on the number of parts --- see Lemma~\ref{lem:decompLP}.

Our second general main result relates the fractional cover problems to the integer ones.
\begin{theorem}\label{thm:asymptotic_bound}
Let $G$ be an $n$-vertex graph, $t \ge 2$ be an integer, and let $\mathbf{c} = (c_i)_{i = 1}^{\infty}$ be a sequence of nonnegative real numbers such that $c_i \ge 1$ for all $i \ge t$. Then for every $\rho > 0$ there exists an $n_0(\rho,c_t)$, such that for every $n > n_0(\rho,c_t)$ there exist an $n$-vertex graph $\tilde{G}$ such that
\[ \coveropt{t}{G,\mathbf{c}} \le \coveroptfrac{t}{\tilde{G},\mathbf{c}} + \rho n^t \]
and
\[ \decompositionopt{t}{G,\mathbf{c}} \le \decompositionoptfrac{t}{\tilde{G},\mathbf{c}} + \rho n^t. \]
\end{theorem}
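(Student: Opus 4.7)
My plan combines the Szemer\'edi Regularity Lemma (to produce a symmetric candidate graph $\tilde{G}$) with the Frankl--R\"odl nibble (to convert fractional covers of $\tilde G$ into integer covers of $G$ with negligible overhead). Fix $\varepsilon, \delta \ll \rho/c_t$ and apply the regularity lemma to $G$ to obtain an equitable $\varepsilon$-regular partition $V_0, V_1, \ldots, V_k$ with $|V_1|=\cdots=|V_k|=m$. Let $R$ be the reduced graph on $[k]$: $ij \in E(R)$ iff $(V_i,V_j)$ is $\varepsilon$-regular of density at least $\delta$. Define $\tilde G$ on $V(G)$ to be the blow-up of $R$: join $V_i, V_j$ completely when $ij \in E(R)$, and no other edges, leaving $V_0$ isolated. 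This is the candidate we will use.

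Classify the $t$-cliques of $G$ as \emph{bad} if they use a vertex of $V_0$, an edge inside a part, or an edge of an irregular or sparse pair, and \emph{good} otherwise. A standard count gives at most $O((\varepsilon+\delta+1/k)n^t) \le \rho n^t/(2 c_t)$ bad $t$-cliques; cover each by itself as a $K_t$, contributing at most $\rho n^t/2$ to the total cost. Each good $t$-clique of $G$ spans $t$ distinct parts whose indices form a $t$-clique of $R$, so it is also a $t$-clique of $\tilde G$. Let $f$ be an optimal fractional $t$-clique cover of $\tilde G$; averaging $f$ over the part-preserving symmetries of $\tilde G$ reduces to the case where $f(K)$ depends only on the \emph{type} of $K$, i.e., the set of parts that $V(K)$ meets. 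The symmetric $f$ is then encoded by nonnegative weights $w_S$ on cliques $S$ of $R$ satisfying $\sum_{S \supseteq S_0,\, S \in \cliques{R}} w_S \ge m^t$ for every $t$-clique $S_0$ of $R$, with total cost $\sum_S c_{|S|}\,w_S = \coveroptfrac{t}{\tilde G,\mathbf{c}}$.

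To transfer back to $G$, assign fractional weight $y_S := w_S/(d(S)\,m^s)$ to each $G$-clique of type $S$, where $d(S)=\prod_{ij\in\binom{S}{2}}d(V_i,V_j)$. Applying the counting lemma uniformly across good $t$-cliques, for each such $T_0$ of type $S_0$ the number of $G$-cliques of type $S \supseteq S_0$ containing $T_0$ is $(1\pm o(1))(d(S)/d(S_0))m^{s-t}$, so $T_0$ receives total weight $(1\pm o(1))/d(S_0)\ge 1-o(1)$. After a $(1+o(1))$ rescaling, $y$ is a valid fractional cover of the good $t$-cliques of $G$ with cost $(1+o(1))\coveroptfrac{t}{\tilde G,\mathbf{c}}$. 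The counting lemma also bounds pairwise codegrees of this hypergraph; combined with near-regularity within each type, a weighted Frankl--R\"odl/Pippenger--Spencer nibble converts $y$ into an integer cover of cost at most $\coveroptfrac{t}{\tilde G,\mathbf{c}}+\rho n^t/2$. Summing with the bad-clique cover yields $\coveropt{t}{G,\mathbf{c}} \le \coveroptfrac{t}{\tilde G,\mathbf{c}} + \rho n^t$.

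The main obstacle is the nibble step: the underlying hypergraph is non-uniform (cliques of sizes $t, t+1, \dots$), carries a general cost vector, and is near-regular only within each type rather than globally. I would handle this by stratifying by $R$-clique type (or clique size) and invoking a Kahn-style generalization of Pippenger--Spencer that permits varying degrees, using the hypothesis $c_i \ge 1$ for $i \ge t$ to bound the cost overhead from the extra cliques the nibble selects (their count is $o(n^t)$, so their cost is as well). The decomposition statement is proved by the same plan, with the near-perfect-matching form of the nibble enforcing equality in the covering constraint and any leftover $t$-cliques absorbed into the bad-clique bucket.
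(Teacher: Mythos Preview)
Your overall strategy---Regularity Lemma plus Frankl--R\"odl nibble---matches the paper, and taking $\tilde G$ to be the blow-up of the reduced graph (rather than the cleaned subgraph the paper uses) is a reasonable variant.

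There is, however, a genuine circularity in your parameter choices that you do not address. Your counting-lemma step, estimating the number of $G$-cliques of type $S$ through a given good $T_0$ as $(1\pm o(1))(d(S)/d(S_0))m^{|S|-t}$, requires the regularity parameter $\varepsilon$ to be small in terms of $|S|$ (this is exactly the dependence of $\ep_0$ on $k$ in the paper's Lemma~\ref{lem:regularity_counting}). But the optimal fractional cover of your blow-up $\tilde G$ may be supported on $R$-cliques $S$ with $|S|$ as large as $\omega(R)$, and $\omega(R)$ can be as large as the number $k$ of regularity classes, which is only determined by the Regularity Lemma \emph{after} $\varepsilon$ is fixed. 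You therefore cannot choose $\varepsilon$ small enough to validate the counting estimate for all relevant $S$; for large $|S|$ the estimate simply fails (indeed $G$ may contain no clique of type $S$ at all, even though $w_S>0$). The paper flags precisely this issue in the Remark at the end of Section~\ref{sec:asymptotic:frac_to_int} and resolves it via Lemma~\ref{lem:no_big_cliques}: before applying regularity, delete $O(n^2/b)$ edges to force $\omega(G)\le b$ for a constant $b$ chosen \emph{prior} to $\varepsilon$; the embedding lemma then gives $\omega(R)\le b$, so every $S$ in play has $|S|\le b$ and the circularity disappears. Your proposed ``Kahn-style generalization'' for the nibble does not touch this, since the obstruction lies upstream, in the counting step itself.

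A secondary point your sketch elides is what happens when some $w_S$ is tiny (polynomially small in $k$): on such strata the near-regularity needed for the nibble can fail. The paper handles this by trivially covering the few $t$-cliques randomly assigned to such $S$, with the threshold $\alpha_0$ in Lemma~\ref{lem:random_nibble} carefully permitted to depend on $k$ (hence on $\ell$ and $\ep$) but not on $n$; this delicate order of quantifiers is again highlighted in that Remark and is not visible in your outline.
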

We prove Theorem~\ref{thm:asymptotic_bound} using  Szemer\'edi's Regularity Lemma~\cite{Sz} and Pippenger's formulation~\cite{AS,F} of the Frankl-R\"odl nibble~\cite{FR}. The approach is similar what Haxell and R\"odl~\cite{HR} and Yuster~\cite{Y} did in the context of \emph{packing} edge-disjoint copies of a graph into another graph. For clique covers, additional difficulties exist --- see the remark at the end of Section~\ref{sec:asymptotic:frac_to_int}.

Theorems~\ref{thm:erdos_decomp} and~\ref{thm:clique_cover_asym} can now be read as corollaries of Theorems~\ref{thm:frac},~\ref{thm:solve_LP}, and~\ref{thm:asymptotic_bound}. 

\noindent 
\begin{proof}[Proof of Theorem~\ref{thm:erdos_decomp}.]
Using Theorems~\ref{thm:asymptotic_bound},~\ref{thm:frac} and~\ref{thm:solve_LP} (ii), for sufficiently large $n$, for every $n$-vertex graph there exists an $n$-vertex graph $\tilde{G}$ and an $n$-vertex complete multipartite graph $H$ such that
\[
\decompositionopt{2}{G,\mathbf{c}} \le \decompositionoptfrac{2}{\tilde{G},\mathbf{c}} + \rho n^2\le 
\decompositionoptfrac{2}{H,\mathbf{c}} + \rho n^2 \le  \decompositionopt{2}{T_{n,2},\mathbf{c}} + \rho n^2 
,\]
where $\rho>0$ is an arbitrary small constant and $c_i=i-1$ for all $i$.
\end{proof}

\begin{proof}[Proof of Theorem~\ref
{thm:clique_cover_asym}.]
Let $\mathbf{1}$ denote the constant $1$ vector. Using Theorems~\ref{thm:asymptotic_bound},~\ref{thm:frac},
and~\ref{thm:solve_LP} (i), for sufficiently large $n$, for every $n$-vertex graph $G$ there exists an $n$-vertex graph $\tilde{G}$ and an $n$-vertex complete multipartite graph $H$ such that
\[ \coveropt{t}{G,\mathbf{1}} \le
\coveroptfrac{t}{\tilde{G},\mathbf{1}} + \rho n^2 \le 
\coveroptfrac{t}{H,\mathbf{1}}  + \rho n^t  \le \coveropt{t}{T_{n,t},\mathbf{1}} + \rho n^t,\]
where $\rho>0$ is an arbitrary small constant.
\end{proof}

Our third main result concerns the (additive) integrality gap for clique decompositions. In combinatorial optimization, a natural question is how different an integer program is from its linear relaxation. For decompositions, we show that $\decompositionopt{t}{G,\mathbf{c}}$ is not (additively) far from $\decompositionoptfrac{t}{G,\mathbf{c}}$.
\begin{theorem}\label{thm:asymptotic_bound_decomposition}
Let $G$ be an $n$-vertex graph, $t \ge 2$ be an integer, and let $\mathbf{c} = (c_i)_{i = 1}^{\infty}$ be a sequence of nonnegative real numbers such that $c_i \ge 1$ and $c_{i+1} - c_i \le c_t$ for all $i \ge t$. Then for every $\rho > 0$ there exists an $n_0(\rho,c_t)$, such that for every $n > n_0(\rho,c_t)$
\[ \decompositionopt{t}{G,\mathbf{c}} \le \decompositionoptfrac{t}{G,\mathbf{c}} + \rho n^t .\]
\end{theorem}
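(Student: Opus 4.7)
My plan is to follow the same truncate-then-nibble strategy that underlies the proof of Theorem~\ref{thm:asymptotic_bound}, but to arrange it so that the fractional object being rounded is a decomposition of $G$ itself rather than of some auxiliary graph $\tilde G$. The extra hypothesis $c_{i+1}-c_i\le c_t$ is exactly what makes it cheap to reduce to bounded-size cliques before running the nibble, and this is the only place it enters the argument.

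\emph{Step 1 (Bounded-size reduction).} Let $f^*$ be an optimal fractional $t$-clique decomposition of $G$, so $\norm{f^*}_\mathbf{c}=\decompositionoptfrac{t}{G,\mathbf{c}}$. Fix a large constant $M=M(\rho,c_t,t)$, to be chosen at the end. I would modify $f^*$ into a fractional decomposition $f$ supported on cliques of size at most $M$ by keeping $f^*(K)$ for every clique $K$ with $|K|\le M$ and, for each clique $K$ with $|K|>M$, replacing the weight $f^*(K)$ by the symmetric fractional $K_M$-decomposition of $K$: assign weight $f^*(K)/\binom{|K|-t}{M-t}$ to every $M$-subset of $V(K)$ (each of which is a clique of $G$). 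For every $t$-clique $T\subseteq K$, the number of such $M$-subsets containing $T$ is $\binom{|K|-t}{M-t}$, so the total new weight on cliques containing $T$ is exactly $f^*(K)$, which means $f$ is still a valid fractional decomposition of $G$. Using the identity $\sum_K f^*(K)\binom{|K|}{t}=|\tcliques{t}{G}|\le\binom{n}{t}$ together with the hypothesis, which gives $c_i\le (i-t+1)c_t$ for all $i\ge t$, the cost increase is bounded by
\[
\norm{f}_\mathbf{c}-\norm{f^*}_\mathbf{c}\ \le\ \frac{c_M}{\binom{M}{t}}\binom{n}{t}\ \le\ \frac{(M-t+1)c_t}{\binom{M}{t}}\binom{n}{t}\ =\ O\!\left(\frac{c_t\,n^t}{M^{t-1}}\right),
\]
which is at most $\rho n^t/2$ once $M$ is chosen large enough in terms of $\rho, c_t, t$.

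\emph{Step 2 (Nibble on $G$).} I would next apply the weighted Frankl--R\"odl nibble from the proof of Theorem~\ref{thm:asymptotic_bound} (Szemer\'edi's Regularity Lemma to control codegrees, then a Pippenger--Spencer-type rounding) directly to the hypergraph whose vertex set is $\tcliques{t}{G}$ and whose edges are the cliques of $G$ of size between $t$ and $M$, using the weights given by $f$. Since $f$ is supported on cliques of size at most $M$, every hyperedge has size at most $\binom{M}{t}$, so the standard nibble regime applies. This should yield pairwise $t$-clique-disjoint cliques $Q_1,\dots,Q_m$ of $G$ of total $\mathbf{c}$-cost at most $\norm{f}_\mathbf{c}+\rho n^t/4$ and that cover all but at most $\rho n^t/(4c_t)$ of the $t$-cliques of $G$.

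\emph{Step 3 (Complete to a decomposition).} I would then add each remaining uncovered $t$-clique to the collection as its own clique; these leftovers are pairwise $t$-clique-disjoint and disjoint from the $Q_i$'s (on $t$-cliques) by construction, so the result is a genuine integer $t$-clique decomposition of $G$. The additional cost is at most $c_t\cdot\rho n^t/(4c_t)=\rho n^t/4$, so combining with Steps 1 and 2 gives
\[
\decompositionopt{t}{G,\mathbf{c}}\ \le\ \norm{f^*}_\mathbf{c}+\frac{\rho n^t}{2}+\frac{\rho n^t}{4}+\frac{\rho n^t}{4}\ =\ \decompositionoptfrac{t}{G,\mathbf{c}}+\rho n^t.
\]
The main obstacle is Step 2: the cost-weighted rounding must deliver total cost within an additive $\rho n^t/4$ of the fractional cost, which requires the detailed Regularity-Lemma-plus-Pippenger--Spencer analysis that already has to appear in the proof of Theorem~\ref{thm:asymptotic_bound}. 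Steps 1 and 3 are short arithmetic checks, and the role of the hypothesis $c_{i+1}-c_i\le c_t$ is precisely to validate the bound on the cost increase in Step 1 without changing $G$.
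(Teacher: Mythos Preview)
Your outline is close to the paper's and Step~1 is correct; in fact it is precisely the first half of the paper's new Lemma~\ref{lem:remove_edges_decomposition}. Step~3 is also fine. The gap is Step~2.

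The machinery you invoke from Theorem~\ref{thm:asymptotic_bound} --- namely Lemma~\ref{lem:frac_to_int} via Lemma~\ref{lem:random_nibble} --- is proved only for a \emph{cleaned} $(n,\ell,\ep,d)$-graph $\tilde G$ with bounded clique number, not for an arbitrary $G$. Applying the Regularity Lemma and cleaning deletes edges, producing $\tilde G\subsetneq G$, and then your $f$ is no longer a fractional decomposition of $\tilde G$: cliques in its support that contain a deleted edge are not cliques of $\tilde G$, and the equality constraint at surviving $t$-cliques drops below~$1$. Nor can you avoid the cleaning by running a ``weighted nibble'' directly on the clique hypergraph of $G$: the relevant fractional codegree $\sum_{K\supseteq T_1\cup T_2} f(K)$ of two distinct $t$-cliques need not be $o(1)$ (for a union of disjoint $K_M$'s with $f$ supported on the full $K_M$'s it equals~$1$), so Pippenger's hypothesis fails without the $\ep$-regular structure.

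The paper closes this gap with Lemma~\ref{lem:remove_edges_decomposition}, which bounds $\decompositionoptfrac{t}{G',\mathbf c}$ in terms of $\decompositionoptfrac{t}{G,\mathbf c}$ for any subgraph $G'\subseteq G$. Its proof has two halves: (a) truncate the optimal fractional decomposition of $G$ to cliques of size at most $b$ --- this is your Step~1 --- and (b) for each remaining clique that contains an edge of $E(G)\setminus E(G')$, replace it by its $t$-subcliques in $G'$, at cost $O\bigl(c_t\binom{b}{t}|E(G)\setminus E(G')|n^{t-2}\bigr)$. You are missing~(b). With the full lemma in hand, the paper runs the chain $G\to G'\to\tilde G$ (Lemmas~\ref{lem:no_big_cliques} and~\ref{lem:cleaning}), applies the nibble Lemma~\ref{lem:frac_to_int} on $\tilde G$, uses Lemma~\ref{lem:remove_edges_decomposition} twice to push $\decompositionoptfrac{t}{\tilde G,\mathbf c}$ back up to $\decompositionoptfrac{t}{G,\mathbf c}$, and uses Lemma~\ref{lem:remove_edges_cover} on the integer side to pass from $\decompositionopt{t}{\tilde G,\mathbf c}$ back to $\decompositionopt{t}{G,\mathbf c}$.
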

We conjecture (see Conjecture~\ref{conj:gap}) that Theorem~\ref{thm:asymptotic_bound_decomposition} holds for covers in place of decompositions. We discuss these integrality gaps in Section~\ref{sec:gap}, where, among other things, we show that the $o(n^t)$ error in Theorems~\ref{thm:asymptotic_bound} and~\ref{thm:asymptotic_bound_decomposition} cannot be improved beyond $n^{2-o(1)}$. Our construction applied to the ``packing'' problem answers a question of Yuster~\cite{Y}.

 The rest of the paper is organized as follows. Theorem~\ref{thm:frac} is proven in Section~\ref{sec:frac}, Theorem~\ref{thm:solve_LP} is proven in Section~\ref{sec:LP}, and Theorem~\ref{thm:asymptotic_bound} is proven in Section~\ref{sec:asymptotic}. 
 In Section~\ref{sec:gap} we discuss the integrality gap, proving Theorem~\ref{thm:asymptotic_bound_decomposition}.

\section{Symmetrization: proof of Theorem~\ref{thm:frac}}\label{sec:frac}

Recall that $U \subseteq V(G)$ is a set of pairwise clones in $G$ if $U$ is nonempty and $N(u)$ is the same for all $u \in U$. For sets of pairwise clones $V_0$ and $V_1$ with no edges between $V_0$ and $V_1$, we let $G(V_0 \to V_1)$ denote the graph obtained from $G$ by \emph{symmetrizing} $V_0$ to $V_1$, where we delete all the edges incident to $V_0$ and make all the neighbors of $V_1$ to be the neighborhoods of the vertices in $V_0$. More precisely, $V(G(V_0 \to V_1)) = V(G)$ and
\[ E(G(V_0 \to V_1)) = \{ xy \in E(G) : x,y \not\in V_0 \cup V_1 \} \cup \{ xy : x \in V_0 \cup V_1, y \not\in V_0 \cup V_1, zy \in E(G) \}, \]
where $z \in V_1$ is arbitrary (since $V_1$ is a set of clones, the choice of $z \in V_1$ does not matter for this definition).

The following lemma shows that at least one of the symmetrizations, either $V_0$ to $V_1$ or $V_1$ to $V_0$, does not decrease the  fractional $t$-clique cover/decomposition number.

\begin{lemma}\label{lem:frac_sym}
Let $\mathbf{c}$ be a nonnegative real sequence. Let $V_0$ and $V_1$ be sets of pairwise clones in a graph $G$. For every integer $t \geq 1$, we have
\[ \tcoveroptfrac{G,\mathbf{c}} \leq \max\{\tcoveroptfrac{G(V_0\to V_1),\mathbf{c}}, \ \tcoveroptfrac{G(V_1\to V_0),\mathbf{c}} \} \]
and
\[ \decompositionoptfrac{t}{G,\mathbf{c}} \leq \max\{\decompositionoptfrac{t}{G(V_0\to V_1),\mathbf{c}}, \ \decompositionoptfrac{t}{G(V_1\to V_0),\mathbf{c}} \} .\]
\end{lemma}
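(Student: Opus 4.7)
The plan is to take optimal fractional $t$-clique covers (respectively, decompositions) $f_0$ of $G_0 := G(V_1 \to V_0)$ and $f_1$ of $G_1 := G(V_0 \to V_1)$, and combine them into a fractional cover (decomposition) $f$ of $G$ with $\mathbf{c}$-cost equal to $\alpha \|f_0\|_{\mathbf{c}} + (1-\alpha) \|f_1\|_{\mathbf{c}}$ for $\alpha := |V_0|/(|V_0|+|V_1|)$, which is at most the maximum of the two. Set $V_2 := V(G) \setminus (V_0 \cup V_1)$. Since $V_0$ and $V_1$ are each sets of clones with no edges between them, $V_0 \cup V_1$ is an independent set in $G$ and a set of pairwise clones in each $G_i$; in particular every clique of $G$ meets $V_0 \cup V_1$ in at most one vertex. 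Every permutation of $V_0 \cup V_1$ is an automorphism of each $G_i$, so by averaging $f_i$ over this group I may assume each $f_i$ is clone-symmetric, meaning $f_i(\{u\} \cup B)$ depends only on $B$ for $u \in V_0 \cup V_1$. This averaging preserves the $\mathbf{c}$-cost (which only depends on clique size) and the covering/decomposition property.

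Given clone-symmetric $f_0$, define $f^{(0)} : \cK(G) \to \mathbb{R}_{\geq 0}$ by
\[ f^{(0)}(K) = \begin{cases} f_0(K) & \text{if } K \subseteq V_2, \\ \tfrac{|V_0|+|V_1|}{|V_0|}\, f_0(K) & \text{if } K \cap V_0 \neq \emptyset, \\ 0 & \text{if } K \cap V_1 \neq \emptyset, \end{cases} \]
and define $f^{(1)}$ analogously with $V_0$ and $V_1$ swapped. The intuition is that by clone-symmetry the $f_0$-mass on cliques of $G_0$ meeting $V_0 \cup V_1$ is spread evenly across the $|V_0|+|V_1|$ choices of which clone is used, but only the $|V_0|$ of these with a $V_0$-vertex remain cliques of $G$; I concentrate the total mass onto the surviving representatives. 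A direct size-bookkeeping calculation, using the common value $w_B := f_0(\{u\} \cup B)$ for $u \in V_0 \cup V_1$, shows $\|f^{(0)}\|_{\mathbf{c}} = \|f_0\|_{\mathbf{c}}$. Setting $f := \alpha f^{(0)} + (1-\alpha) f^{(1)}$ then gives $\|f\|_{\mathbf{c}} \leq \max(\|f_0\|_{\mathbf{c}}, \|f_1\|_{\mathbf{c}})$.

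To verify the covering condition \eqref{eq:covercond} for a $t$-clique $T$ of $G$, I split into three cases. If $T \cap V_0 \neq \emptyset$, say $T \ni u_0 \in V_0$, every $K \in \cK(G)$ with $K \supseteq T$ satisfies $K \cap (V_0 \cup V_1) = \{u_0\}$, and the cliques of $G_0$ containing $T$ are exactly the same subsets of $V(G)$; thus $\sum_{K \supseteq T,\, K \in \cK(G)} f^{(0)}(K) = \tfrac{|V_0|+|V_1|}{|V_0|} \sum_{K \supseteq T,\, K \in \cK(G_0)} f_0(K)$, while $f^{(1)}$ contributes $0$, and multiplying by $\alpha$ gives at least $1$ (and exactly $1$ in the decomposition case). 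The case $T \cap V_1 \neq \emptyset$ is symmetric, supplying the coverage from $(1-\alpha) f^{(1)}$. The delicate case is $T \subseteq V_2$: here cliques of $G$ containing $T$ include those using any $V_0$-vertex, any $V_1$-vertex, or none, whereas cliques of $G_0$ containing $T$ include those using any $V_0 \cup V_1$-vertex or none; clone-symmetry together with the scaling factor $\tfrac{|V_0|+|V_1|}{|V_0|}$ produces the matching identity $\sum_{K \supseteq T,\, K \in \cK(G)} f^{(0)}(K) = \sum_{K \supseteq T,\, K \in \cK(G_0)} f_0(K)$, and analogously for $f^{(1)}$, so the convex combination yields coverage at least $1$ (equal to $1$ in the decomposition case). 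The main obstacle is exactly this last identity: because the cliques of $G$ and of $G_i$ containing a $V_2$-clique $T$ genuinely differ, no term-by-term transfer is available, and the clone-symmetrization step is what lets the aggregated weighted sums agree.
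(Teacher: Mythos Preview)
Your proof is correct and is essentially the same as the paper's: averaging $f_i$ over the clone group, then taking the convex combination with weights $|V_0|/(|V_0|+|V_1|)$ and $|V_1|/(|V_0|+|V_1|)$, produces literally the same function $f$ in both arguments (your $\alpha f^{(0)}(K) + (1-\alpha) f^{(1)}(K)$ equals the paper's case-defined $f(K)$ on each of the three types of cliques), and the verification of~\eqref{eq:covercond} proceeds by the same clone-symmetry bookkeeping. The only difference is organizational: you first build the auxiliary $f^{(0)}$, $f^{(1)}$ and then combine, whereas the paper writes the combined $f$ directly.
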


\begin{proof}
We prove the lemma for the cover number and note the small changes needed to prove the lemma for the decomposition number.

Observe that $V_0 \cup V_1$ is an independent set in $G$, $G(V_0 \to V_1)$, and $G(V_1 \to V_0)$. Also observe that, since $V_0 \cup V_1$ is a set of pairwise clones in $G(V_0 \to V_1)$ and $G(V_1 \to V_0)$, any permutation of $V(G)$ fixing $V(G) \setminus (V_0 \cup V_1)$ is an automorphism of $G(V_0 \to V_1)$ and $G(V_1 \to V_0)$. Let $\Sigma$ be the set of all these permutations.

For $i \in \{0,1\}$, let $f_i$ be an optimal $\mathbf{c}$-cost fractional $t$-clique cover  of $G(V_{1-i} \to V_i)$. Let $\bar{f}_i$ be the average of $f_i \circ \sigma$ over all permutations $\sigma \in \Sigma$. Since each $f_i \circ \sigma$ is an optimal $\mathbf{c}$-cost fractional $t$-clique cover of $G(V_{1-i} \to V_i)$, it is easy to check that $\bar{f}_i$ is one as well. Because of this averaging, we have that $\bar{f}_i(K \cup \{v\})$ does not depend on $v \in V_0 \cup V_1$ for cliques $K$ in the neighborhood of $V_i$. 

Define $f: \cliques{G} \to \mathbb{R}_{\ge 0}$ as follows:
\begin{align*} f(K) = \begin{cases} \frac{|V_0|}{|V_0|+|V_1|} \bar{f}_0(K) + \frac{|V_1|}{|V_0|+|V_1|} \bar{f}_1(K), & |K \cap (V_0 \cup V_1)| = 0, \\ \bar{f}_0(K),  & |K \cap V_0| = 1,  \\ 
\bar{f}_1(K), & |K \cap V_1| = 1, \end{cases} \end{align*}
which is well-defined for cliques $K \in \cliques{G}$ since $V_0 \cup V_1$ is an independent set, and hence $|K \cap (V_0 \cup V_1)| \leq 1$. We claim that $f$ is a fractional $t$-clique cover of $G$. Indeed, for $T \in \K_t(G)$ with $|T \cap V_i| = 1$ for some $i \in \{0,1\}$, we have 
\[ \sum_{\substack{K \in \K(G) \\ T \subseteq K}} f(K) = \sum_{\substack{K \in \K(G) \\ T \subseteq K}} \bar{f}_i(K) = \sum_{\substack{K \in \K(G(V_{1-i}\to V_i)) \\  T \subseteq K}} \bar{f}_i(K) = \sum_{\substack{K \in \K(G(V_{1-i}\to V_i)) \\ T \subseteq K}} \bar{f}_i(K) \geq 1,\]
since the set of cliques $K$ in the sum is the same in $\K(G)$ and $\K(G(V_{1-i}\to V_i))$. (For decompositions instead of covers, the final inequality above would be equality.) For $T \in \K_t(G)$ with $|T \cap (V_0 \cup V_1)| = 0$, we have
\begin{equation}\label{eq:symmetrization}
\sum_{\substack{K \in \K(G) \\ T \subseteq K}} f(K) = \sum_{i \in \{0,1\}} \Big( \sum_{\substack{T\subseteq K \in \K(G) \\ |K \cap (V_0 \cup V_1)| = 0 }} \frac{|V_i|}{|V_0|+|V_1|} \bar{f}_i(K) + \sum_{\substack{T \subseteq K \in \K(G) \\ |K \cap V_i| = 1 }} \bar{f}_i(K) \Big). 
\end{equation}
Let $K$ be a clique in the neighborhood of $V_i$. Recall, for $v \in V_0 \cup V_1$, as $\bar{f}_i(K \cup \{v\})$ does not depend on $v$, for $i \in \{0,1\}$ we have 
\begin{align*}
    \sum_{\substack{K \in \K(G) \\ |K \cap V_i| = 1 \\ T \subseteq K}} \bar{f}_i(K) =  \frac{|V_i|}{|V_0| + |V_1|}  \sum_{\substack{K \in \K(G) \\ |K \cap V_i| = 1 \\ T \subseteq K}} \left(1 + \frac{|V_{1-i}|}{|V_i|}\right)\bar{f}_i(K) = \frac{|V_i|}{|V_0| + |V_1|}  \sum_{\substack{K \in \K(G(V_{1-i}\to V_i)) \\ |K \cap (V_0 \cup V_1)|=1 \\ T \subseteq K}} \bar{f}_i(K).
\end{align*}
Continuing \eqref{eq:symmetrization}, it follows,
\begin{align*}
\sum_{\substack{K \in \K(G) \\ T \subseteq K}} f(K) &= \sum_{i \in \{0,1\}}\frac{|V_i|}{|V_0|+|V_1|}  \Big( \sum_{\substack{K \in \K(G(V_{1-i} \to V_i)) \\ |K \cap (V_0 \cup V_1)| = 0 \\ T \subseteq K}} \bar{f}_i(K) + \sum_{\substack{K \in \K(G(V_{1-i}\to V_i)) \\ |K \cap (V_0 \cup V_1)|=1 \\ T \subseteq K}} \bar{f}_i(K) \Big) \\
&= \sum_{i \in \{0,1\}} \Big( \frac{|V_i|}{|V_0|+|V_1|} \sum_{\substack{K \in \K(G(V_{1-i}\to V_i)) \\ T \subseteq K}} \bar{f}_i(K) \Big) 
\geq \sum_{i \in \{0,1\}} \frac{|V_i|}{|V_0|+|V_1|} = 1 .
\end{align*}
(Again, for decompositions instead of covers, the inequality above would be equality.)
A similar calculation bounds the $\mathbf{c}$-cost of $f$:  
\begin{align*}
    \norm{f}_{\mathbf{c}} &=  \sum_{K \in \K(G)} c_{|K|} f(K) = \sum_{\substack{K \in \cliques{G}\\ |K \cap (V_0 \cup V_1)| = 0}} c_{|K|}f(K) + \sum_{\substack{K \in \cliques{G}\\ |K \cap (V_0 \cup V_1)| = 1}} c_{|K|}f(K)\\
    &= \sum_{i \in \{0,1\}}  \Big( \sum_{\substack{K \in \cliques{G}\\ |K \cap (V_0 \cup V_1)| = 0}} c_{|K|} \frac{|V_i|}{|V_0|+|V_1|} \bar{f}_i(K) + \sum_{\substack{K \in \cliques{G}\\ |K \cap V_i| = 1}}  c_{|K|}\bar{f}_i(K) \Big)\\
    &= \sum_{i \in \{0,1\}} \Big( \frac{|V_i|}{|V_0|+|V_1|} \sum_{\substack{K \in \K(G(V_{1-i}\to V_i)) \\ T \subseteq K}}  c_{|K|}\bar{f}_i(K)\Big) 
    = \frac{|V_0|}{|V_0|+|V_1|} \norm{\bar{f}_0}_{\mathbf{c}} + \frac{|V_1|}{|V_0|+|V_1|} \norm{\bar{f}_1}_{\mathbf{c}}.
\end{align*}
Thus
\begin{align*}
 \tcoveroptfrac{G,\bc} \ \leq  & \ \frac{|V_0|}{|V_0|+|V_1|} \tcoveroptfrac{G(V_1\to V_0),\mathbf{c}} + \frac{|V_1|}{|V_0|+|V_1|} \tcoveroptfrac{G(V_0\to V_1),\mathbf{c}} \\
 \leq & \ \max\{\tcoveroptfrac{G(V_0\to V_1),\mathbf{c}}, \tcoveroptfrac{G(V_1\to V_0),\mathbf{c}} \} .\qedhere
 \end{align*}
\end{proof}

By repeatedly symmetrizing, we eventually obtain a complete multipartite graph, yielding Theorem~\ref{thm:frac}.
    
\begin{proof}[Proof of Theorem~\ref{thm:frac}]
Observe that the maximal sets of pairwise clones are the equivalence classes of $\sim$ on $V(G)$, where $u \sim v$ if and only if $N(u) = N(v)$. If there exists disjoint maximal sets of pairwise clones $V_0$ and $V_1$ in $G$ with no edges between $V_0$ and $V_1$, then by Lemma~\ref{lem:frac_sym}, at least one of  the symmetrizations,  $V_0$ to $V_1$ or $V_1$ to $V_0$ does not decrease the fractional $t$-clique cover/decomposition number of the graph. At the same time, this symmetrization decreases the number of maximal sets of pairwise clones. Repeat this symmetrization process until $V(G)$ is partitioned into maximal sets $V_1, \dots, V_k$ of pairwise clones for some $k$, where there is an edge between every pair of $V_i$ and $V_j$ with $i \neq j$. Since the $V_i's$ are sets of pairwise clones, this final graph is a complete $k$-partite graph.
\end{proof}

\section{Solving linear programs: proof of Theorem~\ref{thm:solve_LP}}\label{sec:LP}

In this section, we aim to maximize $\coveroptfrac{t}{H,\mathbf{c}}$ and $\decompositionoptfrac{t}{H,\mathbf{c}}$ over all $n$-vertex complete multipartite graphs $H$. We first show that, through appropriate normalization, this problem is equivalent to the following sets of linear programs (LPs). Let $k \ge 2$ and $x_1, \dots, x_k$ be nonnegative reals such that $\sum_i x_i = 1$. We will represent the variables of our LPs by $f(I)$, for $f: 2^{[k]} \to \mathbb{R}_{\geq 0}$ and $I \subseteq [k]$. For the cover problem, the LP in question, parameterized by $x_1, \dots, x_k$, is
\begin{equation}\label{LP:cover}\tag{CCLP}
\begin{array}{rll}
\text{minimize} & \norm{f}_\mathbf{c} = \displaystyle\sum_{I \subseteq [k]} c_{|I|} f(I) & \\
\text{subject to} & \displaystyle\sum_{I \supseteq T} f(I) \geq \prod_{i \in T} x_i, &\forall T \in \binom{[k]}{t}, \\
& f(I) \geq 0, &\forall I \subseteq [k].
\end{array}
\end{equation}
For the decomposition problem, the LP in question is
\begin{equation}\label{LP:decomp}\tag{CDLP}
\begin{array}{rll}
\text{minimize} & \norm{f}_\mathbf{c} = \displaystyle\sum_{I \subseteq [k]} c_{|I|} f(I) & \\
\text{subject to} & \displaystyle\sum_{I \supseteq T} f(I) = \prod_{i \in T} x_i, &\forall T \in \binom{[k]}{t}, \\
& f(I) \geq 0, &\forall I \subseteq [k].
\end{array}
\end{equation}
These LPs are the cover/decomposition problems for weighted complete graphs, where the weights on the $t$-cliques are derived from weights on the vertices. Note that these LPs are feasible by taking $f(T) = \prod_{i \in T} x_i$ for all $T \in \binom{[k]}{t}$ and all other $f(I) = 0$, and they are also bounded.
We now show the equivalence of these LPs to maximizing $\coveroptfrac{t}{H,\mathbf{c}}$ and $\decompositionoptfrac{t}{H,\mathbf{c}}$ over all $n$-vertex complete multipartite graphs $H$.

\begin{lemma}\label{lem:LP_reduction}
Let $\mathbf{c}$ be a nonnegative sequence, and let $t \geq 2$. Let $H$ be an $n$-vertex complete multipartite graph with parts $V_1, \dots, V_k$, and let $x_i = |V_i|/n$ for $i \in [k]$. If $f$ is an optimal solution to~\eqref{LP:cover} with parameters $x_1, \dots, x_k$, then $\coveroptfrac{t}{H,\mathbf{c}} = n^t \norm{f}_\mathbf{c}$. Similarly, if $f$ is an optimal solution to~\eqref{LP:decomp} with parameters $x_1, \dots, x_k$, then $\decompositionoptfrac{t}{H,\mathbf{c}} = n^t \norm{f}_\mathbf{c}$.
\end{lemma}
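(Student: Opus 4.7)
The plan is to establish both inequalities $\coveroptfrac{t}{H,\mathbf{c}} \leq n^t \norm{f}_\mathbf{c}$ and $\coveroptfrac{t}{H,\mathbf{c}} \geq n^t \norm{f^*}_\mathbf{c}$ (where $f^*$ is LP-optimal) via a natural correspondence between LP variables and ``types'' of cliques of $H$. Call the \emph{type} of a clique $K \in \cliques{H}$ the set $I = \{i \in [k] : K \cap V_i \neq \emptyset\} \subseteq [k]$; since $H$ is complete multipartite, the cliques of type $I$ are exactly the transversals of $\{V_i : i \in I\}$, so there are $\prod_{i \in I} |V_i|$ of them, each of size $|I|$. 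Similarly, each $t$-clique of $H$ has a unique ``type'' $T \in \binom{[k]}{t}$, and there are $\prod_{i \in T} |V_i| = n^t \prod_{i \in T} x_i$ of them.

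For the inequality $\coveroptfrac{t}{H,\mathbf{c}} \leq n^t \norm{f}_\mathbf{c}$, given a feasible LP solution $f$, I would define $g : \cliques{H} \to \R_{\geq 0}$ by setting $g(K) = n^t f(I) / \prod_{i \in I} |V_i|$, where $I$ is the type of $K$ (so $g$ spreads the total weight $n^t f(I)$ uniformly over all cliques of type $I$). For a $t$-clique $T' \subseteq V(H)$ of type $T$, the cliques $K \supseteq T'$ are in bijection with pairs $(I, \text{choice of vertex in each } V_i \text{ for } i \in I \setminus T)$ for $I \supseteq T$, hence
\[ \sum_{K \supseteq T'} g(K) = \sum_{I \supseteq T} \frac{n^t f(I)}{\prod_{i \in I} |V_i|} \cdot \prod_{i \in I \setminus T} |V_i| = \frac{n^t}{\prod_{i \in T} |V_i|} \sum_{I \supseteq T} f(I) \geq \frac{n^t \cdot \prod_{i \in T} x_i}{\prod_{i \in T} |V_i|} = 1, \]
using the LP constraint at the penultimate step. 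A direct computation gives $\norm{g}_\mathbf{c} = \sum_I c_{|I|} g_I \cdot \prod_{i \in I} |V_i| = n^t \norm{f}_\mathbf{c}$. For the decomposition version, the LP uses equalities, so ``$\geq$'' above becomes ``$=$'' and $g$ is a fractional decomposition.

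For the reverse inequality, given an optimal fractional cover (resp.\ decomposition) $g : \cliques{H} \to \R_{\geq 0}$, I would define an LP solution by averaging within each type:
\[ f(I) = \frac{1}{n^t} \sum_{\substack{K \in \cliques{H} \\ K \text{ of type } I}} g(K). \]
Then $\norm{f}_\mathbf{c} = n^{-t} \norm{g}_\mathbf{c}$ immediately, and for each $T \in \binom{[k]}{t}$, double-counting pairs $(T', K)$ with $T'$ a $t$-clique of type $T$ and $T' \subseteq K$ yields
\[ \sum_{I \supseteq T} f(I) = \frac{1}{n^t} \sum_{\substack{T' \text{ of type } T}} \sum_{K \supseteq T'} g(K) \geq \frac{1}{n^t} \cdot \prod_{i \in T} |V_i| = \prod_{i \in T} x_i, \]
with equality in the decomposition case. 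Hence $f$ is feasible for the corresponding LP, so $n^t \norm{f^*}_\mathbf{c} \leq n^t \norm{f}_\mathbf{c} = \norm{g}_\mathbf{c} = \coveroptfrac{t}{H,\mathbf{c}}$ (and analogously for decompositions), completing the proof.

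There is no real obstacle here; the two maps between LP solutions and fractional covers/decompositions are essentially inverse to each other once one restricts to type-symmetric weightings, and the counting identities are straightforward. The only point to handle carefully is the decomposition version, where one must verify that equality in $\sum_{K \supseteq T'} g(K) = 1$ propagates to equality in the LP constraints, which it does by the double-counting identity above.
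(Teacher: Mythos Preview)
Your proposal is correct and follows essentially the same approach as the paper's proof: both directions use the same maps (spreading $n^t f(I)$ uniformly over cliques of type $I$ in one direction, and aggregating $g$-weights by type and dividing by $n^t$ in the other), with the same verifications of feasibility and cost. The double-counting you use for the reverse inequality matches the paper's computation, since each clique $K$ of type $I \supseteq T$ contains exactly one $t$-clique of type $T$.
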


\begin{proof}
We prove the lemma for the cover number and note the small changes needed to prove the lemma for the decomposition number.

Let $f$ be a solution to~\eqref{LP:cover} with parameters $x_1, \dots, x_k$. We create a fractional $t$-clique cover $g$ of $H$ as follows: for each $K \in \cliques{H}$, let $g(K) = n^t f(I) / \prod_{i \in I} |V_i|$, where $I = \{i \in [k] : K \cap V_i \neq \emptyset\}$. Let $\tilde{T} \in \tcliques{t}{H}$, and let $T = \{i \in [k] : \tilde{T} \cap V_i \neq \emptyset\}$. We define an empty product to be  $1$, as usual. The total weight of the cliques of $H$ containing $\tilde{T}$ is
\[ \sum_{I \supseteq T} n^t \frac{f(I)}{\prod_{i \in I} |V_i|} \prod_{\ell \in I \setminus T} |V_\ell| = n^t \sum_{I \supseteq T} \frac{f(I)}{\prod_{i \in T} |V_i|} \geq n^t \prod_{i \in T} \frac{x_i}{|V_i|} = 1 ,\]
where the inequality comes from~\eqref{LP:cover}, and would be replaced by equality of~\eqref{LP:decomp} for the decomposition number. Thus $g$ is a fractional $t$-clique cover of $H$. The total $\mathbf{c}$-cost of $g$ is
\[ \norm{g}_\mathbf{c} = \sum_{I \subseteq [k]} c_{|I|} n^t \frac{f(I)}{\prod_{i \in I} |V_i|} \prod_{i \in I} |V_i| = n^t \norm{f}_{\mathbf{c}} ,\]
and hence $\coveroptfrac{t}{H,\mathbf{c}} \leq n^t \norm{f}_\mathbf{c}$.

For the other direction, let $g$ be a fractional $t$-clique cover of $H$. We create a feasible solution to~\eqref{LP:cover} as follows: for each $I \subseteq [k]$, let $H(I) = \{ K \in \cliques{H} : K \cap V_i \neq \emptyset \text{ iff } i \in I\}$, and set
\[ f(I) = \frac{1}{n^t} \sum_{K \in H(I)} g(K) .\]
Then for every $T \in \binom{[k]}{t}$, we have
\[ \sum_{I \supseteq T} f(I) = \sum_{I \supseteq T} \frac{1}{n^t} \sum_{K \in H(I)} g(K) = \sum_{\tilde{T} \in H(T)} \frac{1}{n^t} \sum_{I \supseteq T} \sum_{\substack{K \in H(I) \\ \tilde{T} \subseteq K}} g(K) \geq \sum_{\tilde{T} \in H(T)} \frac{1}{n^t} = \prod_{i \in T} x_i ,\]
where the inequality comes from the definition~\eqref{eq:covercond} of a $t$-clique cover and would be replaced by equality for decompositions. The total $\mathbf{c}$-cost of $g$ is
\[ \norm{f}_\mathbf{c} = \sum_{I \subseteq [k]} c_{|I|} f(I) = \frac{1}{n^t} \sum_{K \in \cliques{H}} c_{|K|} g(K) = \frac{1}{n^t} \norm{g}_\mathbf{c} ,\]
and hence the optimal value of~\eqref{LP:cover} is at most $\frac{1}{n^t} \coveroptfrac{t}{H,\mathbf{c}}$.
\end{proof}

Lemma~\ref{lem:LP_reduction} tells us that in order to prove the bounds $\coveroptfrac{t}{H} \leq \eta n^t$ or $\decompositionoptfrac{t}{H} \leq \eta n^t$ for all complete multipartite $H$, it suffices to find feasible solutions $f$ to~\eqref{LP:cover} or~\eqref{LP:decomp} for all parameters $x_1, \dots, x_k$ such that $\norm{f}_\mathbf{c} \leq \eta$. We do this first for the cover number, corresponding to the first part of Theorem~\ref{thm:solve_LP}.

\begin{lemma}\label{lem:coverLP}
Let $\mathbf{c}$ be a sequence of nonnegative reals such that $c_{i+1} - c_i \leq c_t$ for all $i \geq t$. Then there exists a feasible solution $f$ to~\eqref{LP:cover} such that 
\[ \norm{f}_\mathbf{c} \leq c_t \left( \prod_{i=1}^{t-1} x_i \right) \left( 1 - \sum_{i=1}^{t-1} x_i \right) ,\]
where $x_1 \geq x_2 \geq \ldots \geq x_k$.
\end{lemma}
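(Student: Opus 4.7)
The plan is to exhibit an explicit feasible solution to the linear program \eqref{LP:cover}. Given the ordering $x_1 \geq x_2 \geq \cdots \geq x_k$, I will only put weight on the nested ``initial-segment'' cliques $[j] := \{1, 2, \ldots, j\}$ for $t \leq j \leq k$, setting $f(I) = 0$ for every other $I \subseteq [k]$. The intuition is that these cliques concentrate coverage on the heaviest $t$-sets, which is exactly where the constraints of the LP are tightest.

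Concretely, I will set $f([j]) = x_1 x_2 \cdots x_{t-1}\,(x_j - x_{j+1})$ for $t \leq j \leq k$, with the convention $x_{k+1} := 0$. These weights are nonnegative by monotonicity. The crucial observation is a telescoping identity: for any $T \in \binom{[k]}{t}$ with $M := \max T$, the cliques $[j]$ containing $T$ are exactly those with $j \geq M$, so
\[ \sum_{j = M}^{k} f([j]) \;=\; x_1 \cdots x_{t-1} \sum_{j=M}^{k} (x_j - x_{j+1}) \;=\; x_1 \cdots x_{t-1}\, x_M. \]

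Given this, the first step is to verify feasibility, which reduces to showing $x_1 \cdots x_{t-1}\,x_M \geq \prod_{i \in T} x_i$. Writing $T = \{s_1 < s_2 < \cdots < s_t\}$ (so $s_t = M$), the ordering $x_1 \geq \cdots \geq x_k$ together with $s_i \geq i$ forces $x_{s_i} \leq x_i$ for $1 \leq i \leq t-1$, giving $\prod_{i=1}^{t-1} x_{s_i} \leq x_1 \cdots x_{t-1}$; multiplying by $x_{s_t} = x_M$ yields the claim. The second step is the cost estimate: summation by parts yields
\[ \norm{f}_\mathbf{c} \;=\; x_1 \cdots x_{t-1} \sum_{j=t}^{k} c_j (x_j - x_{j+1}) \;=\; x_1 \cdots x_{t-1}\Big( c_t x_t + \sum_{j=t+1}^{k} (c_j - c_{j-1})\, x_j \Big), \]
and applying the hypothesis $c_j - c_{j-1} \leq c_t$ termwise bounds this by $c_t\, x_1 \cdots x_{t-1}\,(x_t + x_{t+1} + \cdots + x_k)$, which equals the claimed bound since $x_t + \cdots + x_k = 1 - \sum_{i=1}^{t-1} x_i$.

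I do not anticipate any substantive obstacle: the construction is engineered so that the ordering hypothesis on $(x_i)$ drives feasibility while the growth hypothesis on $(c_i)$ drives the cost bound, and each assumption is used in exactly one clean step. The only care needed is bookkeeping at the endpoints of the telescoping sums, together with the trivial observation that $[j]$ is a clique in the complete multipartite setting whenever $j \leq k$.
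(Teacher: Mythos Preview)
Your proposal is correct and is essentially the same as the paper's proof: you construct the identical feasible solution $f([j]) = x_1\cdots x_{t-1}(x_j - x_{j+1})$ supported on initial segments, verify feasibility via the same telescoping argument and monotonicity of the $x_i$, and bound the cost via the same Abel summation using $c_j - c_{j-1} \le c_t$.
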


\begin{proof}
Define $f: 2^{[k]} \to \mathbb{R}_{\geq 0}$ by
\[ f(\{1,2,\dots,j\}) = \left( x_j - x_{j+1} \right) \prod_{i=1}^{t-1} x_i \]
for $t \leq j \leq k$, where we let $x_{k+1} = 0$, and $f(I) = 0$ for all other $I$. We first show that $f$ is a feasible solution to~\eqref{LP:cover}. For $T \in \binom{[k]}{t}$ with $m = \max(T)$, we have
\[ \sum_{I \supseteq T} f(I) = \sum_{j=m}^k f(\{1,2,\dots,j\}) = \sum_{j=m}^k \left( x_j - x_{j+1} \right) \prod_{i=1}^{t-1} x_i = x_m \prod_{i=1}^{t-1} x_i \geq \prod_{i \in T} x_i ,\]
where the last inequality follows from $x_1 \geq x_2 \geq \cdots \geq x_k$, which also shows that $f(I) \geq 0$ for all $I$. Thus $f$ is feasible. Using $c_j - c_{j-1} \leq c_t$ for all $j > t$ and $\sum_i x_i = 1$, we have
\begin{align*}
    \norm{f}_\mathbf{c} &= \sum_{j=t}^k c_j \left( x_j - x_{j+1} \right) \prod_{i=1}^{t-1} x_i = \left( \prod_{i=1}^{t-1} x_i \right) \left( c_t x_t + \sum_{j=t+1}^k x_j (c_j - c_{j-1}) \right)\\
    &\leq \left(\prod_{i=1}^{t-1} x_i \right)\sum_{j=t}^k c_t x_j = c_t\left( \prod_{i=1}^{t-1} x_i \right) \left( 1 - \sum_{i=1}^{t-1} x_i \right).\qedhere
\end{align*}
\end{proof}

It is more difficult to find solutions to~\eqref{LP:decomp} than~\eqref{LP:cover} because of the strict equality conditions. With more work, we are able to give a good enough bound on~\eqref{LP:decomp} to imply Theorem~\ref{thm:solve_LP}.

\begin{lemma}\label{lem:decompLP}
Let $\mathbf{c}$ be a sequence of nonnegative reals such that $2c_i \geq c_{i-1} + c_{i+1}$ for all $i \geq 3$. Then when $t=2$, there exists a feasible solution to~\eqref{LP:decomp} such that
\[ \norm{f}_\mathbf{c} \leq c_2 \cdot \max_{J \subseteq [k]} \left( \sum_{j \in J} x_j \right) \cdot \left( \sum_{j \not\in J} x_j \right) .\]
\end{lemma}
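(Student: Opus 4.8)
The plan is to induct on the number of parts $k$, constructing a feasible solution $f$ to~\eqref{LP:decomp} (with $t=2$) whose $\mathbf{c}$-cost is at most $c_2 \cdot \max_{J\subseteq[k]}(\sum_{j\in J}x_j)(\sum_{j\notin J}x_j)$. For the base case $k=2$, we must decompose the single weighted edge $\{1,2\}$ of weight $x_1x_2$, and the only option is $f(\{1,2\})=x_1x_2$, which has cost $c_2 x_1 x_2$; since the unique balanced cut has value $x_1x_2$, this matches the claimed bound. The key structural idea for the inductive step is to take the two parts of largest weight, say the parts indexed $k-1$ and $k$ with $x_{k-1},x_k$ smallest (after reordering so that $x_1\ge\cdots\ge x_k$, it is actually cleanest to merge the two \emph{smallest} parts), merge them into a single part of weight $x_{k-1}+x_k$, apply induction on the resulting $(k-1)$-part instance to get a solution $f'$, and then ``split'' $f'$ back into a solution $f$ on $[k]$ by distributing each set $I$ that used the merged part proportionally between using part $k-1$, part $k$, or both. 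When $I$ is forced to cover the edge $\{k-1,k\}$ — i.e. when we split a set $I\ni(\text{merged part})$ into one containing both $k-1$ and $k$ — the size of that clique goes \emph{up} by one, and this is exactly where the hypothesis $2c_i\ge c_{i-1}+c_{i+1}$ (concavity of $\mathbf{c}$) is used to control the cost increase.

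More concretely: given the optimal $f'$ on $[k-1]$ with the merged part called $m$, for each $I'\subseteq[k-1]$ with $m\in I'$ write $I_0 = I'\setminus\{m\}$; we need the weight $f'(I')$ to be redistributed among $I_0\cup\{k-1\}$, $I_0\cup\{k\}$, and $I_0\cup\{k-1,k\}$ so that the decomposition equalities at all $t$-sets $T$ are preserved. The edges to worry about are those of the form $\{a,k-1\}$, $\{a,k\}$ (for $a\le k-2$) and $\{k-1,k\}$ itself. A natural choice is to send a $p$-fraction to $I_0\cup\{k-1,k\}$ and split the remaining $(1-p)$ fraction between $I_0\cup\{k-1\}$ and $I_0\cup\{k\}$ in ratio $x_{k-1}:x_k$, where $p$ is chosen uniformly (independent of $I'$) so that the total weight landing on the edge $\{k-1,k\}$ equals exactly $x_{k-1}x_k$. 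One checks that such a global $p\in[0,1]$ exists because the total $f'$-mass on sets containing $m$ ``paired with'' another index is at least $x_{k-1}x_k$ after the merge (this is where we use that $m$'s weight is $x_{k-1}+x_k$ and $x_{k-1},x_k$ are the two smallest), and the proportional splitting automatically fixes up the edges $\{a,k-1\},\{a,k\}$ from the edge $\{a,m\}$. The cost bookkeeping: every unit of $f'$-weight moved to $I_0\cup\{k-1,k\}$ increases clique size from $|I_0|+1$ to $|I_0|+2$, costing an extra $c_{|I_0|+2}-c_{|I_0|+1}$ per unit, and the concavity hypothesis lets us bound this telescoping increase by something proportional to $c_2$ times the ``extra'' edge weight $x_{k-1}x_k$, which in turn is absorbed by how the max-cut quantity changes when splitting a merged vertex.

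The main obstacle I expect is precisely making the last sentence rigorous: showing that the total cost increase incurred by splitting, summed over all $I'$, is at most $c_2\big(\max_{J\subseteq[k]}(\sum_{j\in J}x_j)(\sum_{j\notin J}x_j) - \max_{J\subseteq[k-1]}(\cdots)\big)$, so that the induction closes. This requires understanding how the max-cut function behaves under merging two small parts — one wants that merging the two smallest parts decreases the max cut by at least the amount needed — together with a careful accounting that the concave increments $c_{j}-c_{j-1}$ are all bounded by $c_2 - c_1 \le c_2$ (from concavity and $c_1\ge0$), so each unit of displaced weight costs at most $c_2$ extra. A secondary technical point is verifying that the global fraction $p$ can indeed be chosen in $[0,1]$ simultaneously for all the affected edges; this should follow from the proportional ($x_{k-1}:x_k$) splitting rule, which makes the constraints at edges $\{a,k-1\}$ and $\{a,k\}$ redundant once the constraint at $\{k-1,k\}$ is satisfied, but it needs to be checked that $f'$ puts enough mass on ``mergeable'' sets, which is where the ordering $x_1\ge\cdots\ge x_k$ enters.
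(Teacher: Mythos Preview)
Your induction-on-$k$ plan with merging the two smallest parts matches the paper's, but your three-way splitting rule cannot produce a feasible \emph{decomposition}. With your scheme the set $I_0\cup\{k-1,k\}$ covers the edge $\{a,k-1\}$ for every $a\in I_0$, so summing over all $I_0\ni a$ the total weight placed on $\{a,k-1\}$ is
\[
\Bigl(p + (1-p)\tfrac{x_{k-1}}{x_{k-1}+x_k}\Bigr)\, x_a(x_{k-1}+x_k) \;=\; x_a x_{k-1} + p\,x_a x_k,
\]
which equals the required $x_a x_{k-1}$ only when $p=0$; but $p=0$ leaves the edge $\{k-1,k\}$ entirely uncovered. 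Distributing $f'(I_0\cup\{m\})$ only among sets of sizes $|I_0|+1$ and $|I_0|+2$ is too rigid: the equality constraints cannot all be met.

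The paper's remedy is a \emph{four}-way redistribution. Start from the pure proportional split ($p=0$); then, for each $I\subseteq[k-2]$ with $k-2\in I$, move an amount $\xi$ from each of $I\cup\{k-1\}$ and $I\cup\{k\}$ to both $I$ (one element smaller) and $I\cup\{k-1,k\}$ (one element larger). All four sets contain every edge inside $I$, and for each $a\in I$ exactly two of them contain $\{a,k-1\}$ (likewise $\{a,k\}$), so every edge-sum is preserved except $\{k-1,k\}$, which gains $\xi$. The cost change per unit of $\xi$ is $c_{|I|}+c_{|I|+2}-2c_{|I|+1}\le 0$ by the concavity hypothesis, so in fact $\norm{f}_{\mathbf{c}}\le\norm{g}_{\mathbf{c}}$: there is \emph{no} cost increase to absorb, and since merging two parts only restricts the available cuts, the induction closes immediately without any max-cut increment argument. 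The one remaining issue is nonnegativity after subtracting $\xi$; this is why the move is restricted to sets containing the index $k-2$ and $\xi$ is taken proportional to $g(I\cup\{k-1\})$, so that the ordering $x_{k-2}\ge x_{k-1}\ge x_k$ forces both subtracted entries to stay nonnegative.
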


\begin{proof}

We proceed by induction on $k$. For $k=2$, the optimal solution $f$ to~\eqref{LP:decomp} is trivially $f(\{1,2\}) = x_1 x_2$ and $0$ elsewhere. Since $x_1 + x_2 = 1$, we have $\norm{f}_\mathbf{c} = c_2 x_1(1-x_1)$.

Suppose $k \geq 3$, and let $x_1 \geq \ldots \geq x_k$ be given. We begin with a $g: 2^{[k-1]} \to \mathbb{R}_{\geq 0}$ such that for every $i,j \in [k-1]$ with $i<j$, we have
\[ \sum_{I \ni i,j} g(I) = \begin{cases} x_i x_j & \text{ if } j \neq k-1, \\ x_i (x_{k-1} + x_k) & \text{ if } j = k-1. \end{cases} \]
That is, we have feasible solution to~\eqref{LP:decomp} when $x_{k-1}$ and $x_k$ are merged; in the formulation of complete multipartite graphs, $g$ corresponds to a fractional clique-decomposition of the graph where we delete all the edges between the smallest two parts. Using $g$, we write down a feasible solution to~\eqref{LP:decomp} with $\mathbf{c}$-cost at most that of $g$. Let $\xi: 2^{[k-3]} \to \mathbb{R}_{\geq 0}$ be arbitrary for now. For $I \subseteq [k-2]$, we define
\begin{equation}\label{eq:newweights}
\arraycolsep=1pt
\begin{array}{llll}
f(I) &= &g(I) &+ \mathbf{1}_{k-2 \in I} \cdot \xi(I \setminus \{k-2\}) ,\\
f(I \cup \{k-1\}) &= \ \frac{x_{k-1}}{x_{k-1}+x_k} &g(I \cup \{k-1\}) &- \mathbf{1}_{k-2 \in I} \cdot \xi(I \setminus \{k-2\}) ,\\
f(I \cup \{k\}) &= \ \frac{x_k}{x_{k-1}+x_k} &g(I \cup \{k-1\}) &- \mathbf{1}_{k-2 \in I} \cdot \xi(I \setminus \{k-2\}) ,\\
f(I \cup \{k-1, k\})\  &= &0 &+ \mathbf{1}_{k-2 \in I} \cdot \xi(I \setminus \{k-2\}) .
\end{array}
\end{equation}
Observe that for $\xi = 0$, this definition of $f$ splits up $g$ among $x_{k-1}$ and $x_k$ proportionally. We can interpret the role of $\xi$ as moving equal weights from $I \cup \{k-1\}$ and $I \cup \{k\}$ to their union and intersection; the hope is that for a suitable choice of $\xi$, the weight moved to $I \cup \{k-1,k\}$ will satisfy the constraint $\{i,j\} = \{k-1,k\}$ of~\eqref{LP:decomp}. We claim that, for any choice of $\xi$, $f$ satisfies the all constraints of~\eqref{LP:decomp} except possibly the nonnegativity constraints and the constraint corresponding to $\{i,j\} = \{k-1,k\}$. Observe that for every $I \subseteq [k-2]$, we have
\begin{equation}\label{eq:sumformula}
f(I) + f(I \cup \{k-1\}) + f(I \cup \{k\}) + f(I \cup \{k-1,k\}) = g(I) + g(I \cup \{k-1\}) .
\end{equation}
Let $i,j \in [k]$ with $i<j$. If $j \leq k-2$, then
\begin{align*}
     \sum_{i,j \in I \subseteq [k]} f(I) &= \sum_{i,j \in I \subseteq [k-2]}  f(I) + f(I \cup \{k-1\}) + f(I \cup \{k\}) + f(I \cup \{k-1,k\}) \\
     &= \sum_{i,j \in I \subseteq [k-2]}  g(I) + g(I \cup \{k-1\})  = \sum_{i,j \in I \subseteq [k-1]} g(I) = x_i x_j .
\end{align*}
Similarly, if $j = k-1$, then
\begin{align*}
    \sum_{i,j \in I \subseteq [k]} f(I) &= \sum_{i \in I \subseteq [k-2]}  f(I \cup \{k-1\}) + f(I \cup \{k-1,k\})  
    = \sum_{i \in I \subseteq [k-2]} \frac{x_{k-1}\cdot g(I \cup \{k-1\})}{x_{k-1}+x_k} \\
    &= \frac{x_{k-1}}{x_{k-1}+x_k} \sum_{i,j \in I \subseteq [k-1]} g(I) = \frac{x_{k-1}}{x_{k-1}+x_k} x_i(x_{k-1}+x_k) = x_i x_{k-1} .
\end{align*}
Finally, if $j = k$ and $i \leq k-2$, then
\begin{align*}
    \sum_{i,j \in I \subseteq [k]} f(I) &= \sum_{i \in I \subseteq [k-2]}  f(I \cup \{k\}) + f(I \cup \{k-1,k\}) 
    = \sum_{i \in I \subseteq [k-2]} \frac{x_k\cdot  g(I \cup \{k-1\}) }{x_{k-1}+x_k}\\
    &= \frac{x_k}{x_{k-1}+x_k} \sum_{i,j \in I \subseteq [k-1]} g(I) = \frac{x_k}{x_{k-1}+x_k} x_i(x_{k-1}+x_k) = x_i x_k .
\end{align*}
The missing constraint, $\{i,j\} = \{k-1,k\}$, is
\[ \sum_{i,j \in I \subseteq [k]} f(I) = \sum_{I \subseteq [k-2]} f(I \cup \{k-1,k\}) = \sum_{I \subseteq [k-3]} \xi(I) = x_{k-1} x_k .\]
Choosing $\xi(I) = \frac{x_{k-1}x_k}{x_{k-2}(x_{k-1}+x_k)} g(I \cup \{k-2, k-1\})$ satisfies this constraint:
\begin{align*}
\sum_{I \subseteq [k-3]} \xi(I) &= \frac{x_{k-1}x_k}{x_{k-2}(x_{k-1}+x_k)} \sum_{k-2,k-1 \in I \subseteq [k-1]} g(I) \\
&= \frac{x_{k-1}x_k}{x_{k-2}(x_{k-1}+x_k)} x_{k-2}(x_{k-1}+x_k) = x_{k-1} x_k .
\end{align*}
The nonnegativity of $f$ either follows directly from the nonnegativity of $g$, or from the following calculation for this choice of $\xi$: for $I \subseteq [k-2]$ with $k-2 \in I$, we have
\begin{align*}
    f(I \cup \{k-1\}) &= \frac{x_{k-1}(x_{k-2}-x_k)}{x_{k-2}(x_{k-1}+x_k)} g(I \cup \{k-1\}) \geq 0 ,\\
    f(I \cup \{k\}) &= \frac{x_k(x_{k-2}-x_{k-1})}{x_{k-2}(x_{k-1}+x_k)} g(I \cup \{k-1\}) \geq 0 .
\end{align*}
Thus, with this choice of $\xi$, $f$ is feasible for~\eqref{LP:decomp}. We now compute the $\mathbf{c}$-cost of $f$ as
\begin{align*}
    \norm{f}_\mathbf{c} =& \sum_{I \subseteq [k]} c_{|I|} f(I) = \sum_{I \subseteq [k-3]} \sum_{J \subseteq \{k-2,k-1,k\}} c_{|I\cup J|} f(I\cup J)\\
    =& \sum_{I \subseteq [k-3]} \bigg( c_{|I|} g(I) + c_{|I|+1} \left( 
g(I \cup \{k-2\}) + g(I \cup \{k-1\}) + \xi(I) \right)\\
&\ \ \ \ \ \ \ \ \ \ \ + c_{|I|+2} \left( g(I \cup \{k-2,k-1\}) - 2\xi(I) \right) + c_{|I|+3} \xi(I) \bigg)\\
   =& \sum_{I \subseteq [k-1]} c_{|I|} g(I) + \sum_{I \subseteq [k-3]} \left( c_{|I|+1} - 2c_{|I|+2} + c_{|I|+3} \right) \xi(I) \leq \sum_{I \subseteq [k-1]} c_{|I|} g(I) = \norm{g}_\mathbf{c} ,
\end{align*}
since $c_{|I|+1} - 2c_{|I|+2} + c_{|I|+3} \leq 0$ and $\xi(I) \geq 0$.

By induction, we may take $\norm{g}_\mathbf{c}$ to be at most
\[ c_2 \max_{J \subseteq [k-1]} \left( \sum_{j \in J} x_j' \right) \left( \sum_{j \not\in J} x_j' \right) \leq c_2 \max_{J \subseteq [k]} \left( \sum_{j \in J} x_j \right) \left( \sum_{j \not\in J} x_j \right) ,\]
where $x_j' = x_j$ if $j \in [k-2]$ and $x_{k-1}' = x_{k-1} + x_k$.
\end{proof}

We quickly derive Theorem~\ref{thm:solve_LP} from Lemmas~\ref{lem:LP_reduction}, \ref{lem:coverLP}, and~\ref{lem:decompLP}.

\begin{proof}[Proof of Theorem~\ref{thm:solve_LP}]
Let $H$ be a complete multipartite graph with parts $V_1, \dots, V_k$, and let $x_i = |V_i|/n$ for $i \in [k]$, where $x_1 \geq \ldots \geq x_k$.

For the first part of Theorem~\ref{thm:solve_LP}, by Lemmas~\ref{lem:LP_reduction} and~\ref{lem:coverLP}, we have
\[ \coveroptfrac{t}{H,\mathbf{c}} \leq n^t c_t \left( \prod_{i=1}^{t-1} x_i \right) \left( 1 - \sum_{i=1}^{t-1} x_i \right) = c_t \left( \prod_{i=1}^{t-1} |V_i| \right) \left( \sum_{i=t}^k |V_i| \right) .\]
By standard optimization techniques, the right side is maximized when $|V_1|$, $|V_2|$, \dots, $|V_{t-1}|$, $|V_t| + |V_{t+1}| + \ldots + |V_k|$ are as equal as possible. This optimum value is simply $\coveroptfrac{t}{\turangraph{n}{t},\mathbf{c}}$.

For the second part of Theorem~\ref{thm:solve_LP}, by Lemmas~\ref{lem:LP_reduction} and~\ref{lem:decompLP}, we have
\[ \decompositionoptfrac{2}{H,\mathbf{c}} \leq n^2 c_2 \max_{J \subseteq [k]} \left( \sum_{j \in J} x_j \right) \left( \sum_{j \not\in J} x_j \right) = c_2 \max_{J \subseteq [k]} \left( \sum_{j \in J} |V_j| \right) \left( \sum_{j \not\in J} |V_j| \right) .\]
By standard optimization techniques, the right side is maximized when there is a bipartition of the $V_i$'s that is as equal as possible, matching the $\decompositionoptfrac{2}{\turangraph{n}{2}, \mathbf{c}}$.
\end{proof}

We take the rest of this section to discuss optimal solutions to~\eqref{LP:cover} and~\eqref{LP:decomp}. We encode the variables of the dual LP as a function $g : \binom{[k]}{t} \to \mathbb{R}$. The dual LP to~\eqref{LP:cover} is
\begin{equation}\label{LP:coverdual}\tag{CCdual}
\begin{array}{rll}
\text{maximize} & \displaystyle\sum_{T \in \binom{[k]}{t}} g(T) \displaystyle\prod_{i \in T} x_i & \\
\text{subject to} & \displaystyle\sum_{T \subseteq I} g(T) \leq c_{|I|}, &\forall I \subseteq [k], \\
& g(T) \geq 0, &\forall T \in \binom{[k]}{t}.
\end{array}
\end{equation}
The dual LP to~\eqref{LP:decomp} is the same, just without the nonnegativity constraints:
\begin{equation}\label{LP:decompdual}\tag{CDdual}
\begin{array}{rll}
\text{maximize} & \displaystyle\sum_{T \in \binom{[k]}{t}} g(T) \displaystyle\prod_{i \in T} x_i & \\
\text{subject to} & \displaystyle\sum_{T \subseteq I} g(T) \leq c_{|I|}, &\forall I \subseteq [k].
\end{array}
\end{equation}
We remark on two interesting cases: when $c_i = 1$ for all $i$, and when $c_i = i-1$ for all $i$. When $c_i = 1$ for all $i$, we claim that the optimal value of~\eqref{LP:cover} is $\prod_{i=1}^t x_i$. That is, we can determine $\coveroptfrac{t}{H}$ for all complete multipartite graphs. This optimal value is realized by $f([k]) = \prod_{i=1}^t x_i$ and $0$ otherwise, where $x_1 \geq \ldots \geq x_k$. Optimality is witnessed by $g([t]) = 1$ and $0$ otherwise.

Now consider~\eqref{LP:decomp} when the cost vector is $c_i = i-1$ and $t=2$, that is, the original setting of Erd\H{o}s's problem. These weights may seem arbitrary, but they turn out to be special: the dual~\eqref{LP:coverdual} to the cover problem is a maximum spanning tree LP, that is, interpreting $g$ as (fractionally) selecting the edges of a weighted complete graph, the constraints are that every $i$ vertices induces at most $i-1$ edges. With this interpretation in hand, the optimal solution to~\eqref{LP:coverdual} is just $g(\{1,j\}) = x_1 x_j$ for all $2 \leq j \leq k$, and $0$ otherwise, with optimal value $x_1(1-x_1)$. The dual~\eqref{LP:decompdual} to the decomposition problem drops the nonnegativity constraints of~\eqref{LP:coverdual}, which make many techniques applicable to~\eqref{LP:coverdual} no longer valid. Nevertheless, we can still use the same $g$ to show that the optimal value of~\eqref{LP:decomp} is at least $x_1(1-x_1)$. We believe that this should in fact be the optimal value of~\eqref{LP:decomp}, but we were unable to prove it. It appears that the optimal solutions depend on various relations among the $x_i$, such as if $x_1 - x_2 \geq x_2 - x_3$.

\section{Integer to fractional: proof of Theorem~\ref{thm:asymptotic_bound}}\label{sec:asymptotic}

The main tools of the proof of Theorem~\ref{thm:asymptotic_bound} are Szemer\'edi's regularity lemma and the Frankl-R\"odl nibble. The following is a high-level sketch of the proof. Given $\ep>0$ and an $n$-vertex graph $G$, we `clean' $G$ in a standard way using the regularity lemma to obtain a subgraph $\tilde{G}$ of $G$ such that $\coveropt{t}{G} \leq \coveropt{t}{\tilde{G}} + \ep n^t$. This $\tilde{G}$ has a nice partite structure, and we also need $\tilde{G}$ to have bounded clique number for technical reasons (for example, the dependence of $\ep_0$ on $k$ in Lemma \ref{lem:regularity_counting}; see the remark at the end of Section~\ref{sec:asymptotic:frac_to_int}). We use the information contained in an optimal fractional $t$-clique cover of $\tilde{G}$ to partition $\tcliques{t}{\tilde{G}}$ among  $K \in \cliques{\tilde{G}}$, and then for each $K \in \cliques{\tilde{G}}$, we apply the Frankl-R\"odl nibble to obtain an integer $t$-clique cover (of the required total weight) of the associated $t$-cliques. This shows that for these `cleaned' graphs, the fractional and integer $t$-clique cover numbers are close to each other. Since the fractional $t$-clique cover number is bounded by Theorem~\ref{thm:frac}, we get the desired bound on $\coveropt{t}{\tilde{G}}$ and hence on $\coveropt{t}{G}$ as well.

In Section~\ref{sec:asymptotic:prelim} we describe  Szemer\'edi's regularity lemma and the associated cleaning. In Section~\ref{sec:asymptotic:nibble} we state the Frankl-R\"odl nibble method and give our  application of it. In Section~\ref{sec:asymptotic:frac_to_int} we describe  how to use a fractional $t$-clique cover of a cleaned graph to obtain an integer $t$-clique cover of the same graph, via an application of  the Frankl-R\"odl nibble. Finally, in Section~\ref{sec:asymptotic:final_proof} we give an additional technical lemma needed to get bounded clique number, and then we compile all the pieces into a proof of Theorem~\ref{thm:asymptotic_bound}.

One bit of notation is particularly helpful in simplifying the technicality of our statements and proofs. We say $x = a \pm b$ (where $b$ is always positive) to mean $a-b \leq x \leq a+b$. Furthermore, for a monotone function $f$ on $\mathbb{R}_{\geq 0}$, we say $x = f(a \pm b)$ to mean $f(a-b) \leq x \leq f(a+b)$. We read this notation going left-to-right, so $a \pm b = c \pm d$ means $[a-b,a+b] \subseteq [c-d,c+d]$. Finally, we have $(a \pm b) + (c \pm d) = (a+c) \pm (b+d)$; and if $a$ and $c$ are positive, $(a \pm b)(c \pm d) = (ac + bd) \pm (ad + bc)$.

\subsection{Regularity Lemma preliminaries}\label{sec:asymptotic:prelim}

For two disjoint sets $U, W$ of vertices in a graph $G$, we denote by $e(U,W)$ the number of edges of $G$ with one endpoint in $U$ and the other in $W$. Let $d(U, W)$ be the \emph{density} of the pair $(U, W)$ defined by 
\[ d(U,W) := \frac{e(U, W)}{|U||W|}. \]
For $\ep > 0$, we say the pair $(U,W)$ is \emph{$\ep$-regular} in $G$ if $U$ and $W$ are disjoint and for every $U' \subseteq U$ and $W' \subseteq W$ such that $|U'| \geq \ep |U|$ and $|W'| \geq \ep |W|$ we have 
\[ |d(U', W') - d(U, W)| < \ep.\]

We shall use the following ``equitable'' version of the Regularity Lemma~\cite{Sz}.
\begin{theorem}[Regularity Lemma]\label{thm:regular}
For every $\ep > 0$ and positive integer $m_0$, there exists a constant $M = M(m_0, \ep)$ such that for every graph $G$, there exists a partition $P = \{V_0, V_1, \dots, V_\ell\}$ of $V(G)$ with $m_0 < \ell \leq M$ such that the followings holds: 
\begin{itemize}
    \item[(i)] $|V_0| < \ep |V(G)|$ and $|V_1| = \ldots = |V_\ell|$;
    \item[(ii)] all but at most $\ep \binom{\ell}{2}$ pairs $(V_i, V_j)$ with $1 \leq i < j \leq \ell$ are $\ep$-regular. 
\end{itemize}
\end{theorem}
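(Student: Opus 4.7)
The plan is to prove the Regularity Lemma via the classical energy-increment (index) argument of Szemer\'edi. For a partition $P = \{W_1, \ldots, W_\ell\}$ of $V(G)$, I would introduce its index
$$q(P) = \sum_{1 \leq i < j \leq \ell} \frac{|W_i||W_j|}{n^2} d(W_i, W_j)^2,$$
which lies in $[0, 1/2]$. The entire argument rests on two facts about $q$: (a) refinement never decreases the index, by Cauchy-Schwarz applied to the density profile inside each pair of cells; and (b) if a pair $(W_i, W_j)$ is not $\ep$-regular, witnessed by subsets $U \subseteq W_i$ and $V \subseteq W_j$ with $|d(U,V) - d(W_i, W_j)| \geq \ep$, then refining $W_i$ by $\{U, W_i \setminus U\}$ and $W_j$ by $\{V, W_j \setminus V\}$ increases the contribution of this pair to $q$ by at least $\ep^4 \cdot \frac{|W_i||W_j|}{n^2}$.

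Starting from an arbitrary equitable partition into $m_0 + 1$ cells, I would iterate the following loop: if the current partition $P$ has more than $\ep \binom{\ell}{2}$ irregular pairs, simultaneously refine each cell $W_i$ using \emph{all} witnessing subsets coming from irregular pairs involving $W_i$. Combining (a) and (b) and summing over irregular pairs gives an energy gain of at least $\ep^5/2$ per iteration, and since $q \leq 1/2$, the loop terminates after at most $\ep^{-5}$ iterations. Because refining $\ell$ cells by at most $\ell - 1$ subsets each can produce up to $\ell \cdot 2^\ell$ new cells, the number of cells grows in tower-type fashion, yielding the bound $M = M(m_0, \ep)$.

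Finally, to obtain the equitable form required by condition (i), I would chop each output cell into chunks of a common size $s = \lfloor n/M' \rfloor$ for a sufficiently large $M'$ and dump the leftovers into an exceptional set $V_0$; choosing $s$ small enough keeps $|V_0| < \ep n$. Running the main loop with a slightly smaller parameter $\ep'$ in place of $\ep$ absorbs the perturbation this equitable adjustment causes in condition (ii). The only real obstacle is the quantitative claim in fact (b): expanding the new contribution of the refined four cells to $q$ and applying Cauchy-Schwarz to the refined density profile must extract the $\ep^4 |W_i||W_j|/n^2$ increment from the hypothesized $\ep$-density deviation. Once this core estimate is established, the remainder of the proof is routine bookkeeping with the tower function.
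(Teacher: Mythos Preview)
The paper does not prove this theorem; it is stated with a citation to Szemer\'edi's original paper~\cite{Sz} and used as a black box. Your proposal is the classical energy-increment (index) argument, which is exactly Szemer\'edi's approach and is correct in outline.

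One small point worth tightening: your energy-gain estimate of $\ep^5/2$ per iteration uses that $|W_i||W_j|/n^2 \approx 1/\ell^2$, which requires the cells to stay (approximately) equal in size throughout the loop. Since the simultaneous refinement step destroys equitability, the standard fix is to re-equitize after each refinement (chop into equal pieces and absorb remainders into $V_0$) rather than only once at the end; otherwise the unweighted count ``more than $\ep\binom{\ell}{2}$ irregular pairs'' need not translate into an $\Omega(\ep^5)$ index increment. Your final paragraph suggests you are aware an adjustment is needed, but as written the loop and the equitization are decoupled in a way that leaves this gap. This is a routine repair and does not affect the overall strategy.
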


For a graph $G$, we say a partition $P = \{V_0, V_1, \dots, V_\ell\}$ of $V(G)$ satisfying $(i)$ and $(ii)$ in  Theorem \ref{thm:regular} is an $\ep$-{\it regular partition}. We clean the graph using this $\ep$-regular partition as a scaffold. The proof of the lemma below is standard and we omit it.

\begin{lemma}[Cleaning]\label{lem:cleaning}
For every $\ep > 0$ and positive integer $m_0$, there exists a constant $M = M(\ep, m_0) >0$ such that the following holds. Let $G$ be a graph, and let $d>0$. Then $G$ has an $n$-vertex subgraph $\tilde{G}$, where  $n = |V(\tilde{G})| \geq (1-\ep)|V(G)|$, with the following properties:
\begin{itemize}
    \item[(i)] $V(\tilde{G})$ has a partition $P = \{V_1,\dots, V_\ell\}$, where each $V_i$ is an independent set, $m_0 < \ell \leq M$, $|V_1| = \ldots = |V_\ell| = n/\ell$;
    \item[(ii)] $V_i \cup V_j$ is an independent set if $(V_i, V_j)$ is not an $\ep$-regular pair with density at least $d$;
    \item[(iii)] $|E(G)| \leq |E(\tilde{G})| + (2\ep + 1/m_0 + d) n^2$.
\end{itemize}
\end{lemma}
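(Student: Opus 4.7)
The plan is to apply the Regularity Lemma (Theorem~\ref{thm:regular}) and then obtain $\tilde{G}$ by deleting every edge of $G$ that lies in a ``bad'' portion of the resulting partition. Concretely, apply Theorem~\ref{thm:regular} with a parameter $\ep' \le \ep$ chosen small enough (as a function of $\ep$) and with the same $m_0$, obtaining an $\ep'$-regular equitable partition $\{V_0, V_1, \dots, V_\ell\}$ of $V(G)$ with $m_0 < \ell \le M(\ep', m_0)$. Let $V(\tilde{G}) := V_1 \cup \cdots \cup V_\ell$, and let $E(\tilde{G})$ consist of all edges $uv \in E(G)$ with $u \in V_i$ and $v \in V_j$ for some $i \neq j$ such that the pair $(V_i, V_j)$ is $\ep'$-regular in $G$ and satisfies $d(V_i, V_j) \ge d$. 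Properties (i) and (ii) are then immediate from the construction, using that $\ep' \le \ep$ ensures every $\ep'$-regular pair is also $\ep$-regular.

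For property (iii), the edges of $G$ not surviving in $\tilde{G}$ split into four pairwise disjoint classes, each easily bounded: edges incident to $V_0$, numbering at most $|V_0| \cdot |V(G)| \le \ep' |V(G)|^2$; edges with both endpoints in a single part $V_i$ with $i \ge 1$, numbering at most $\ell \binom{n/\ell}{2} < n^2/(2m_0)$; edges lying in non-$\ep'$-regular pairs, numbering at most $\ep' \binom{\ell}{2}(n/\ell)^2 \le \ep' n^2/2$; and edges lying in $\ep'$-regular pairs of density less than $d$, numbering at most $\binom{\ell}{2} \cdot d \cdot (n/\ell)^2 \le d n^2/2$. Using the identity $|V(G)| = n + |V_0|$ together with $|V_0| < \ep' |V(G)|$ to convert everything into a bound in terms of $n$, and taking $\ep'$ to be a sufficiently small multiple of $\ep$ (for instance $\ep' := \ep/4$ suffices for small $\ep$), these four contributions sum to at most $(2\ep + 1/m_0 + d)\, n^2$, which is exactly (iii).

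The argument is entirely standard, so there is no real conceptual obstacle; the only slightly delicate step is the bookkeeping of the constants, in particular ensuring that the combined error from edges incident to $V_0$ and from non-regular pairs, after conversion via $|V(G)| \le n/(1-\ep')$, is absorbed into the $2\ep n^2$ summand of (iii). This is handled by the standard device of applying the Regularity Lemma with a regularity parameter slightly smaller than the target $\ep$ before performing the cleaning.
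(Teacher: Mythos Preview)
Your proposal is correct and follows exactly the standard cleaning argument; the paper itself omits the proof entirely, stating only that it ``is standard.'' Your construction and edge-count bookkeeping are precisely what is expected, and the device of applying Theorem~\ref{thm:regular} with a slightly smaller parameter $\ep'$ to absorb the $|V(G)|^2$ versus $n^2$ conversion is the usual way to make the constants work.
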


Adding back  the edges of $G$ to the cleaned graph $\tilde{G}$ does not increase the integer $t$-clique cover too much; see Lemma~\ref{lem:remove_edges_cover}. Henceforth, we will be working with the cleaned graph $\tilde{G}$ from Lemma~\ref{lem:cleaning}. 

Call $G$ an \emph{$(n,\ell,\ep,d)$-graph} if it has $n$ vertices which can be partitioned into independent sets $V_1, \dots, V_\ell$ such that for every $1 \leq i < j \leq \ell$, we have $|V_i| = |V_j|$, and either $(V_i, V_j)$ is an $\ep$-regular pair with density at least $d$ or $V_i \cup V_j$ is an independent set. For an $(n,\ell,\ep,d)$-graph $G$ witnessed by the partition $P = \{V_1, \dots, V_\ell\}$, let $R = R(G,n,\ell,\ep,d,P)$ be the graph with $\ell$ vertices $v_1, \dots, v_\ell$, where $v_i v_j$ is an edge of $R$ if and only if $(V_i, V_j)$ has density at least $d$ in $G$. We call $R$ the \emph{reduced graph}. For $K = \{v_i\}_{i \in I} \in \K(R)$, let
\[ G(K) = \left\{ \{w_i\}_{i \in I} \in \cliques{G} : w_i \in V_i \text{ for every } i \in I \right\} ,\]
that is, $G(K)$ is the set of cliques of $G$ `corresponding' to $K$. 

We need the following simple generalization of the counting lemma: for every $T \in \tcliques{t}{R}$ and $K \in \cliques{R}$ with $T \subseteq K$, almost all $T' \in G(T)$ are contained in approximately the same number of $K' \in G(K)$, as is expected given the densities of the $\ep$-regular pairs $(V_i, V_j)$ in $G$. Before we claim this in Lemma~\ref{lem:regularity_counting}, we need a standard lemma about subsets of regular pairs.

\begin{lemma}\label{lem:reg_subpair}
Let $(V_1, V_2)$ be an $\ep$-regular pair of density $d$ with $\ep \leq 1/4$, and let $V_i' \subseteq V_i$ be such that $|V_i'| \geq \sqrt{\ep} |V_i|$ for $i \in [2]$. Then $(V_1', V_2')$ is a $\sqrt{\ep}$-regular pair of density $d \pm \ep$.
\end{lemma}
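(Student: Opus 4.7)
The plan is to verify the two conditions in the definition of a $\sqrt{\ep}$-regular pair for $(V_1', V_2')$: (a) that its density equals $d\pm\ep$, and (b) that for any further substantial subsets $W_i\subseteq V_i'$, the density $d(W_1,W_2)$ deviates from $d(V_1',V_2')$ by less than $\sqrt{\ep}$. Both conditions will follow by applying the $\ep$-regularity of the original pair $(V_1,V_2)$ after checking that the sets in question are large enough in $V_1,V_2$.

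For condition (a), since $\ep\le 1$ we have $|V_i'|\ge\sqrt{\ep}|V_i|\ge\ep|V_i|$ for each $i\in[2]$, so $V_1'\subseteq V_1$ and $V_2'\subseteq V_2$ satisfy the size requirement of $\ep$-regularity. Therefore $|d(V_1',V_2')-d|<\ep$, i.e.\ $d(V_1',V_2')=d\pm\ep$.

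For condition (b), let $W_i\subseteq V_i'$ with $|W_i|\ge\sqrt{\ep}|V_i'|$ for $i\in[2]$. Then
\[
|W_i|\ \ge\ \sqrt{\ep}\,|V_i'|\ \ge\ \sqrt{\ep}\cdot\sqrt{\ep}\,|V_i|\ =\ \ep|V_i|,
\]
so again the $\ep$-regularity of $(V_1,V_2)$ applies to $(W_1,W_2)$, giving $|d(W_1,W_2)-d|<\ep$. Combining with the density estimate from (a) via the triangle inequality,
\[
|d(W_1,W_2)-d(V_1',V_2')|\ \le\ |d(W_1,W_2)-d|+|d-d(V_1',V_2')|\ <\ 2\ep.
\]
The only arithmetic point to observe is that the hypothesis $\ep\le 1/4$ is exactly what is needed to conclude $2\ep\le\sqrt{\ep}$, completing the verification of $\sqrt{\ep}$-regularity. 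This $\ep\le 1/4$ condition is therefore not a real obstacle but rather the entire content of the quantitative step; the rest of the argument is a sandwich of two applications of the definition of $\ep$-regularity.
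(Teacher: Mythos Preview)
Your proof is correct and follows essentially the same approach as the paper's: both verify the density estimate for $(V_1',V_2')$ directly from $\ep$-regularity, then use the triangle inequality on arbitrary large subsets $W_i$ (the paper calls them $A_i$) to obtain a deviation bounded by $2\ep \le \sqrt{\ep}$. The arguments are identical up to notation.
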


\begin{proof}
As $(V_1,V_2)$ is an $\ep$-regular pair, and $|V_i'| \ge \sqrt{\ep}|V_i|$ for every $i \in [2]$, we have $|d(V_1',V_2') - d(V_1,V_2)| < \ep$, which implies $d(V_1',V_2') = d \pm \ep$. For every $i \in [2]$ and for every $A_i \subseteq V_i'$ such that $|A_i| \geq \sqrt{\ep} |V_i'| \geq \ep |V_i|$, since $(V_1, V_2)$ is $\ep$-regular, we have
\[ |d(A_1,A_2) - d(V_1',V_2')| \leq |d(A_1,A_2) - d(V_1,V_2)| + |d(V_1,V_2) - d(V_1',V_2')| < 2\ep \leq \sqrt{\ep} ,\]
showing that $(V_1',V_2')$ is a $\sqrt{\ep}$-regular pair. 
\end{proof}

The first part of Lemma \ref{lem:regularity_counting} follows from  a standard application of Szemer\'{e}di's Regularity Lemma, which has appeared in the literature in various forms, see for example \cite[Theorem~14]{KSSS}. For the second part of Lemma \ref{lem:regularity_counting}, similar results have been known in the special case when $t = 2$, see \cite[Lemma 15]{HR} or \cite[Lemma 2.2]{Y}.

\begin{lemma}\label{lem:regularity_counting}
For every integer $k > 1$ and pair of positive real numbers $d$ and $\gamma$, there exists an $\ep_0 = \ep_0(k,d,\gamma) > 0$ such that for $\ep < \ep_0$, the following holds. Let $G$ be a graph with vertex partition $\{V_1, \dots, V_k\}$ such that each $V_i$ is an independent set and $(V_i,V_j)$ is an $\ep$-regular pair of density $d_{i,j} \geq d$ for all $1 \leq i < j \leq k$.  Then the following holds:
\begin{enumerate}[(i)]
    \item  We have that \[ |\tcliques{k}{G}| = (1 \pm \gamma) \prod_{i=1}^{k} \left( |V_i| \prod_{j=i+1}^{k} d_{i,j} \right) .\]
    \item For every $1 \leq t < k$ there exists at least
\[ (1-\gamma) \prod_{i=1}^t \left( |V_i| \prod_{j=i+1}^{t} d_{i,j} \right) \]
$t$-cliques in $G[\bigcup_{i=1}^t V_i]$, such that each  is  contained in
\[ (1 \pm \gamma)\left(  \prod_{i=t+1}^{k} |V_i| \right) \left( \prod_{\{i,j\} \in I(t,k)} d_{i,j} \right) \]
of the $k$-cliques in $G$, where $I(x,y) := \{ \{i,j\} : 1 \le i < y \text{ and }  \max\{i,x\} < j \le y\}$.
\end{enumerate}
\end{lemma}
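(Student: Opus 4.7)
The plan is to prove part (i) by induction on $k$, then derive part (ii) by iterating the inductive step $t$ times.

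For part (i), the base case $k=2$ is immediate: $|\tcliques{2}{G}| = d_{1,2}|V_1||V_2|$ exactly. For the inductive step, for each $v_1 \in V_1$ and each $2 \le j \le k$ I would set $W_j(v_1) := N(v_1) \cap V_j$. By $\ep$-regularity of $(V_1, V_j)$, all but at most $(k-1)\ep|V_1|$ choices of $v_1$ are \emph{good}, meaning $|W_j(v_1)| = (d_{1,j}\pm\ep)|V_j|$ for every $j$; for each good $v_1$, Lemma~\ref{lem:reg_subpair} applied to the $\binom{k-1}{2}$ pairs shows that each $(W_i(v_1), W_j(v_1))$ is a $\sqrt{\ep}$-regular pair of density $d_{i,j} \pm \ep$. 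I would then apply the inductive hypothesis (with $\sqrt{\ep}$ in place of $\ep$, and slightly relaxed $d$ and $\gamma$) to this $(k-1)$-partite subgraph to count $(k-1)$-cliques inside $\bigcup_{j \ge 2} W_j(v_1)$; summing over good $v_1$ and absorbing bad $v_1$ (which contribute at most $(k-1)\ep \prod_{i=1}^k |V_i|$) into the overall $\gamma$-error should yield the claim, provided $\ep_0(k,d,\gamma)$ is chosen small enough relative to $\ep_0(k-1,\cdot,\cdot)$.

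For part (ii), I would iterate the single-vertex argument $t$ times. For a partial clique $T_s = (v_1,\ldots,v_s)$ with $v_i \in V_i$, write $W_j(T_s) = \bigcap_{i \le s} N(v_i) \cap V_j$ for $j > s$. Call $T_s$ \emph{$s$-regular} if (a) $|W_j(T_s)| = (\prod_{i \le s} d_{i,j} \pm \alpha_s)|V_j|$ for every $j > s$, and (b) every pair $(W_i(T_s), W_j(T_s))$ with $s < i < j \le k$ is $\beta_s$-regular of density $d_{i,j} \pm \alpha_s$, where the parameters $\alpha_s, \beta_s$ track the accumulated error. The same argument as in part (i) shows that, given an $s$-regular $T_s$, all but a $O((k-s)\beta_s)$ fraction of legal extensions $v_{s+1} \in W_{s+1}(T_s)$ produce an $(s+1)$-regular $T_{s+1}$; the updates $\beta_{s+1} = \sqrt{\beta_s}$ and a slight increase in $\alpha_{s+1}$ are forced by Lemma~\ref{lem:reg_subpair}. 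After $t$ iterations, at least $(1-\gamma/2)|\tcliques{t}{G[\bigcup_{i \le t}V_i]}|$ $t$-cliques $T$ are $t$-regular, and for each such $T$, part (i) applied to $G[\bigcup_{j > t} W_j(T)]$ (with parameters $k-t$, $d/2$, $\gamma/2$) outputs exactly $(1\pm\gamma)\prod_{j > t}|V_j| \prod_{\{i,j\}\in I(t,k)} d_{i,j}$ extensions, as required.

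The main technical obstacle will be controlling the parameter degradation across the $t$ iterations: each application of Lemma~\ref{lem:reg_subpair} weakens $\ep$-regularity to $\sqrt{\ep}$-regularity and demands that surviving sides have density at least $\sqrt{\ep_{\text{current}}}$ in their respective $V_i$, which is secured by the density lower bound $d$ so long as $\ep_0$ is small enough. Since $k$ is fixed, the iteration depth is bounded by $k$, so the accumulated errors stay controlled by choosing $\ep_0(k,d,\gamma)$ to be an iterated (depth $k$) square root of $\gamma$ divided by a constant depending only on $k$ and $d$.
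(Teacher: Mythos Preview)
Your proposal is correct and follows essentially the same approach as the paper: induction on $k$, peeling off $V_1$ by restricting to typical vertices (those with $|N(v)\cap V_j| = (d_{1,j}\pm\ep)|V_j|$ for all $j$), invoking Lemma~\ref{lem:reg_subpair} on the neighborhoods, and applying the inductive hypothesis with halved $\gamma$ and $d$ and squared regularity parameter. The only organizational difference is that the paper runs a \emph{single} induction on $k$ proving (i) and (ii) simultaneously---so for (ii) it applies the inductive hypothesis for (ii) with $t-1$ to the $(k-1)$-partite neighborhood graph---whereas you first prove (i) by induction and then obtain (ii) by explicitly iterating the peel-off $t$ times and invoking (i) on the residual $(k-t)$-partite graph; this is just the paper's recursion unrolled, and the parameter bookkeeping (your $\beta_{s+1}=\sqrt{\beta_s}$ versus the paper's $\ep_0(k,d,\gamma)\le \ep_0(k-1,d/2,\gamma/2)^2$) is identical.
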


\begin{proof}
We proceed by induction on $k$. Our base case is $k=2$, when $t=1$. By definition, we have $|\tcliques{2}{G}| = e(V_1,V_2) = d_{1,2}\cdot |V_1||V_2|$.  We set $\ep(2,d,\gamma)  =\min\{ \gamma/2, \gamma d\}$. For  $\ep < \ep(2,d,\gamma)$ we let $V_1'$ be the set of vertices $v \in V_1$ such that $|N(v) \cap V_2| < (d_{1,2}-\ep)|V_2|$. Since $d(V_1',V_2) < d_{1,2}-\ep$ and $(V_1, V_2)$ is $\ep$-regular, we have that $|V_1'| < \ep |V_1|$. We reach a similar conclusion regarding the number of $v \in V_1$ such that $|N(v) \cap V_2| > (d_{1,2}+\ep)|V_2|$, completing the proof of the base case.

For the induction step, assume $k>2$. Define inductively
\[ \ep_0(k,d,\gamma) = \min\left\{ 1/4, \gamma d^{\binom{k}{2}}/ 9 (k-1)^2, \ep_0(k,d/2,\gamma/2)^2 \right\} ,\]
so that for all $0 < \ep < \ep_0$, for all $I \subseteq \binom{[k]\setminus\{1\}}{2}$, and for all $d^*_{i,j}$ for $2 \leq i < j \leq k$ with $d \leq d^*_{i,j} \leq 1$, we have
\begin{equation}\label{eq:regularity_counting_1}
    \prod_{\{i,j\} \in I} \left( d^*_{i,j} \pm \ep \right) = (1 \pm \ep/d)^{|I|} \prod_{\{i,j\} \in I} d^*_{i,j} = \left( 1 \pm \gamma/9 \right) \prod_{\{i,j\} \in I} d^*_{i,j} .
\end{equation}

Let $\ep < \ep_0(k,d,\gamma)$.  As before, at least $(1-2(k-1)\ep) |V_1|$ vertices $v \in V_1$ satisfy $|N(v) \cap V_i| = (d_{1,i} \pm \ep)|V_i|$ for all $2 \leq i \leq k$. Fix such a $v \in V_1$, and let $V_i' = N(v) \cap V_i$. As $\ep < 1/4$, by Lemma~\ref{lem:reg_subpair}, the pairs $(V_i',V_j')$ for $2 \leq i < j \leq k$ are all $\sqrt{\ep}$-regular with density $d'_{i,j} := d(V_i',V_j') = d_{i,j} \pm \ep$. By our choice of $\ep$, $d'_{i,j} \geq d/2$ for all $2 \leq i < j \leq k$. As $\ep_0(k,d,\gamma) < \ep_0(k,d/2,\gamma/2)^2$,  by induction with $\gamma$ as $\gamma/2$, there are
\begin{align*}
    &(1 \pm \gamma/2) \prod_{i=2}^k \left( |V_i'| \prod_{j=i+1}^{k} d'_{i,j} \right) = (1 \pm \gamma/2) \prod_{i=2}^k \left( |V_i'| \prod_{j=i+1}^{k} (d_{i,j} \pm \ep) \right)\\
    = &(1 \pm \gamma/2) \left( \prod_{i = 2}^k  |V_i| \right)  \left( \prod_{1 \leq i < j \leq k} (d_{i,j} \pm \ep) \right)
    =  (1 \pm 2\gamma/3) \left( \prod_{i = 2}^k  |V_i| \right)  \left( \prod_{1 \leq i < j \leq k} d_{i,j} \right)
\end{align*} 
$k$-cliques in $G$ containing $v$, where the last (in)equality follows from \eqref{eq:regularity_counting_1}. This proves the second part of the lemma when $t = 1$. Summing over all $v \in V_1$, we get
\begin{align*}
    |\tcliques{k}{G}| &= (1 \pm 2(k-1)\ep) |V_1| (1 \pm 2\gamma/3)  \left( \prod_{i = 2}^k  |V_i| \right)  \left( \prod_{1 \leq i < j \leq k} d_{i,j} \right) \pm 2(k-1)\ep \prod_{j=1}^k |V_j|\\
    &= (1 \pm \gamma) \prod_{j=1}^k \left( |V_j| \prod_{i=1}^{j-1} d_{i,j} \right), 
\end{align*}
where we use that $\ep \leq \gamma d^{\binom{k}{2}} / 9 (k-1)^2$. This concludes the first part of the lemma.

\noindent For the $t>1$ statement, we see that by induction, at least
\begin{align*}
    (1-\gamma/2) \prod_{i=2}^t \left( |V_i'| \prod_{j=i+1}^{t} d'_{i,j} \right) &\geq (1-\gamma/2) \left( \prod_{i=2}^t  |V_i| \right) \left( \prod_{1 \leq i < j \leq t} (d_{i,j}-\ep) \right)\\
    &\geq (1-2\gamma/3) \left( \prod_{i=2}^t  |V_i| \right) \left( \prod_{1 \leq i < j \leq t} d_{i,j} \right) 
\end{align*}
of the $(t-1)$-cliques in $G[\bigcup_{i=2}^t V_i']$ (last inequality follows from \eqref{eq:regularity_counting_1}) are contained in 
\begin{align*}
    &(1\pm \gamma/2) \left( \prod_{i=t+1}^k |V_i'|\right) \left( \prod_{i = t+1}^{k}\prod_{j=i+1}^{k} d'_{i,j}\right) \left(\prod_{i = 2}^{t}\prod_{j = t+1}^{k} d'_{i,j}\right) \\  =& (1 \pm \gamma/2)\left( \prod_{i=t+1}^k |V_i|\right)\left(\prod_{(i,j) \in I(t,k)} \left( d_{i,j} \pm \ep \right) \right)
    = (1 \pm \gamma)\left(  \prod_{i=t+1}^{k} |V_i| \right) \left( \prod_{(i,j) \in I(t,k)} d_{i,j} \right)
\end{align*}
$(k-1)$-cliques in $G[\bigcup_{i=2}^k V_i']$, where the last equality follows from \eqref{eq:regularity_counting_1}. Since this is true for at least $(1-2(k-1)\ep) |V_1|$ of the $v \in V_1$, we have that at least
\[ (1-2\gamma/3)(1-2(k-1)\ep) \left( \prod_{i=1}^t  |V_i| \right) \left( \prod_{(i,j) \in I(1,t)} d_{i,j} \right) \geq (1-\gamma) \prod_{j=1}^t \left( |V_j| \prod_{i=1}^{j-1} d_{i,j} \right) \]
of the $t$-cliques in $G[\bigcup_{i=1}^t V_i]$ are contained in
\[ (1 \pm \gamma)\left(  \prod_{i=t+1}^{k} |V_i| \right) \left( \prod_{(i,j) \in I(t,k)} d_{i,j} \right) \]
$k$-cliques in $G$.
\end{proof}

\subsection{The Nibble Method}\label{sec:asymptotic:nibble}

For a hypergraph $\mathcal{H}$, we denote by $d_\mathcal{H}(v)$ the \emph{degree} of a vertex $v$, which is the number of edges of $\mathcal{H}$ containing $v$. The \emph{co-degree} of a pair of vertices $u,v$, denoted by $d_\mathcal{H}(u,v)$, is the number of edges of $\mathcal{H}$ containing both $u$ and $v$. The following simple statement of the Frankl-R\"odl nibble, due to Pippenger (see \cite[Theorem 4.7.1]{AS} or \cite[Theorem 8.4]{F}), states that a nearly regular uniform hypergraph with small co-degrees has a nearly optimal edge cover/packing.

\begin{theorem}\label{thm:nibble}
For every integer $r \ge 2$ and reals $C \ge 1$ and $\delta' > 0$, there exist $\gamma' = \gamma'(r,C,\delta') > 0$ and $D_0 = D_0(r,C,\delta')$ such that for every $n$ and $D \ge D_0$ the following holds. Let $\mathcal{H}$ be an $n$-vertex $r$-uniform hypergraph which satisfies the following conditions:
\begin{enumerate}[(i)]
    \item for all but at most $\gamma' n$ vertices $x \in V({\cal H})$, $d_\mathcal{H}(x) = (1 \pm \gamma')D$,
    \item for all $x \in V(\mathcal{H})$, $d_\mathcal{H}(x) < CD$, and
    \item for any two distinct $x,y \in V(\mathcal{H})$, $d_\mathcal{H}(x,y) < \gamma' D$.
\end{enumerate}
Then $\mathcal{H}$ has an edge packing (i.e., matching) of size at least $(1 - \delta')(n/r)$.
\end{theorem}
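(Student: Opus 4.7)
This is Pippenger's packing theorem for nearly regular uniform hypergraphs with small co-degrees, so the plan is to prove it by the iterated semi-random (``nibble'') method of Frankl--R\"odl. Fix a small nibble parameter $\tau \in (0,1)$ and iterate for $T = T(\delta')$ rounds, starting from $\mathcal{H}_0 = \mathcal{H}$. In round $t$, include each edge of $\mathcal{H}_t$ independently in a candidate set $M_t$ with probability $\tau / D_t$, where $D_t$ tracks the current typical degree (with $D_0 = D$ and, heuristically, $D_t \approx D e^{-\tau(r-1)t}$). Keep those edges of $M_t$ that are pairwise vertex-disjoint; these form a partial matching $M_t'$. Obtain $\mathcal{H}_{t+1}$ from $\mathcal{H}_t$ by deleting every vertex covered by $M_t'$ together with all incident edges. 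The output matching is $\bigcup_{t < T} M_t'$.

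The first key step is the one-step analysis: for a ``good'' vertex $v$ with $d_{\mathcal{H}_t}(v) = (1 \pm \gamma_t) D_t$, the probability that $v$ is matched in round $t$ is approximately $1 - e^{-\tau}$, and conditional on $v$ surviving, $\E[d_{\mathcal{H}_{t+1}}(v)] \approx D_t e^{-\tau(r-1)}$. The small co-degree hypothesis (iii) is exactly what makes the edges through $v$ behave like independent Poisson events, justifying these estimates, while hypothesis (ii) uniformly bounds how much a single vertex or edge can skew the process. A parallel computation shows that typical pairwise co-degrees in $\mathcal{H}_{t+1}$ stay at most $\gamma'' D_{t+1}$ for an appropriate $\gamma''$.

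The main obstacle will be concentration: one must argue that the degrees and pairwise co-degrees of all but a small fraction of the surviving vertices in $\mathcal{H}_{t+1}$ stay within a tiny factor of their expectations, with failure probability small enough to union bound over $O(n^2)$ pairs. The standard tools are Talagrand's inequality or an edge-exposure martingale / Azuma argument, exploiting the bounded influence $O(CD)$ of altering any single edge's inclusion in $M_t$. Taking $D_0$ large enough for these concentration bounds and $\gamma'$ small enough to absorb the per-round drift in degree parameters preserves the one-step hypotheses throughout the iteration; the exceptional-vertex set grows by only $o(n)$ per round. After $T = \Theta(\log(1/\delta'))$ rounds the typical fraction of uncovered vertices drops below $\delta'/2$, and the always-small exceptional set contributes at most another $\delta'/2$, yielding a matching of size $(1-\delta') n / r$. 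Since the paper uses this statement as a black box, I would simply cite \cite[Theorem~4.7.1]{AS} or \cite[Theorem~8.4]{F} rather than redo the classical argument.
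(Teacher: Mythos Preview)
Your proposal is correct and matches the paper's treatment: the paper does not prove Theorem~\ref{thm:nibble} at all but simply cites \cite[Theorem~4.7.1]{AS} and \cite[Theorem~8.4]{F}, exactly as you suggest in your last sentence. The only small addition in the paper is the remark that the cited result is stated for nearly perfect \emph{coverings}, and that a nearly perfect covering trivially yields a nearly perfect matching; you may want to include that one-line observation.
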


We remark here that Theorem \ref{thm:nibble} is stated \cite[Theorem 4.7.1]{AS} in terms of the existence of nearly perfect coverings. It is well-known  that the existence of a nearly perfect covering implies the existence of a nearly perfect packing.

We will apply Theorem~\ref{thm:nibble} in the following way. Recall that $R$ is the reduced graph of $G$, which we have cleaned in Section~\ref{sec:asymptotic:prelim}. Fix $K \in \cliques{R}$, and recall that $G(K)$ is the set of cliques in $G$ `corresponding' to $K$. We want a subset of $G(K)$ which covers as efficiently as possible all  $t$-cliques corresponding to some $t$-clique $T \subseteq K$. To do this, we set up a $\binom{k}{t}$-uniform $\binom{k}{t}$-partite hypergraph $\mathcal{H}_K$ whose parts are $G(T)$ for $t$-cliques $T \subseteq K$ and whose edges are given by $G(K)$, where a hyperedge $K\in G(K)$ has vertices $T\in G(T)$
with $T\subset K$.

Unfortunately, $\mathcal{H}_K$ cannot have a nearly optimal edge cover, since the parts of $\mathcal{H}_K$ do not necessarily have the same size --- their sizes depend on the densities of the pairs of parts of $G$, as given by Lemma~\ref{lem:regularity_counting}. However, when we randomly sample subsets of these parts so that these subsets all have about the same size, then we can use Lemma~\ref{lem:regularity_counting} to check the hypotheses of Theorem~\ref{thm:nibble} and thus obtain a nearly perfect edge cover. How exactly we choose these random subsets depends on the fractional $t$-clique cover/decomposition of $G$ that we start with. We postpone these details until Section~\ref{sec:asymptotic:frac_to_int}. For now, we show in general terms that the nibble method can be applied to this `randomly thinned' $\mathcal{H}_K$.

\begin{lemma}\label{lem:random_nibble}
For every pair of integers $k$ and $t$ such that $1 \le t < k$ and pair of positive real numbers $d \le 1$ and $\delta$, there exists $\ep_0 = \ep_0(k,d,\delta) > 0$ such that for all $\ep < \ep_0$ and for all $\alpha_0 > 0$, there exists $n_0 = n_0(k,\alpha_0,d,\delta)$ such that the following holds.
Let $G$ be a graph with vertex partition $\{V_1, \dots, V_k\}$ where each $V_i$ is an independent set of size $n \geq n_0$ and $(V_i, V_j)$ is an $\ep$-regular pair of density $d_{i,j} \geq d$ for all $1 \leq i < j \leq k$, and let $\alpha_0 \leq \alpha \le d^{\binom{t}{2}}$.

For each $T \in \binom{[k]}{t}$, let $G(T) = \{ T' \in \tcliques{t}{G} : \forall i \in T, |T' \cap V_i| = 1 \}$, let $p_T = \frac{\alpha n^t}{|G(T)|}$, and let $G(T)^*$ be a random subset of $G(T)$ where we include each element independently with probability $p_T$, independently of the other $T' \in \binom{[k]}{t}$. Then with probability at least $1-1/n$,
there exists a partition of $\bigcup_{T \in \binom{[k]}{t}} G(T)^*$ into two sets $\mathcal{S}_1$ and $\mathcal{S}_2$
such that 
\begin{enumerate}[(i)]
    \item $\mathcal{S}_1$ can be partitioned into sets of the form $\{T' : T' \subseteq K'\}$ for $K' \in \tcliques{k}{G}$;
    \item $|\mathcal{S}_1| = (1 \pm \delta)\binom{k}{t}\alpha n^t$;
    \item $|\mathcal{S}_2| \le \delta \alpha n^t$.
\end{enumerate}
\end{lemma}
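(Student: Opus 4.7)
The plan is to apply Theorem~\ref{thm:nibble} to an auxiliary hypergraph $\mathcal{H}^*$ built from the random samples. Let $V^* := \bigcup_{T \in \binom{[k]}{t}} G(T)^*$, viewed as a disjoint union of $\binom{k}{t}$ parts. Define $\mathcal{H}^*$ to be the $\binom{k}{t}$-partite, $\binom{k}{t}$-uniform hypergraph on vertex set $V^*$ whose edges are the $k$-cliques $K'$ of $G$ with $|K' \cap V_i| = 1$ for each $i \in [k]$ such that every one of the $\binom{k}{t}$ $t$-subcliques of $K'$ happens to lie in $V^*$. A matching $\mathcal{M}$ in $\mathcal{H}^*$ yields the desired partition directly: set $\mathcal{S}_1$ to be the union of the vertex sets of edges of $\mathcal{M}$, naturally partitioned as $\{T' : T' \subseteq K'\}$ over $K' \in \mathcal{M}$, and $\mathcal{S}_2 := V^* \setminus \mathcal{S}_1$. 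So the task reduces to producing a near-perfect matching in $\mathcal{H}^*$.

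A Chernoff bound combined with Lemma~\ref{lem:regularity_counting}(i) gives $|V^*| = (1 \pm o(1))\binom{k}{t}\alpha n^t$ with probability $1 - o(1/n)$. To identify the common target degree $D$ for Theorem~\ref{thm:nibble}, note that for a typical $T' \in G(T)$ in the sense of Lemma~\ref{lem:regularity_counting}(ii), the number of $k$-cliques $K'$ (in this partite structure) with $T' \subseteq K'$ is $(1 \pm \gamma) N_T$, where $N_T = n^{k-t} \prod_{\{i,j\} \not\subseteq T} d_{i,j}$. Conditional on $T' \in G(T)^*$, each such $K'$ becomes an edge of $\mathcal{H}^*$ with probability $\prod_{T'' \neq T} p_{T''}$, by independence of the samplings on distinct parts. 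Substituting $p_T = \alpha n^t / |G(T)|$ together with the Lemma~\ref{lem:regularity_counting}(i) asymptotic for $|G(T)|$ into $N_T \prod_{T'' \neq T} p_{T''}$ and simplifying, the $d_{i,j}$-exponents cancel symmetrically across $T$, so the product is independent of $T$ and equals $D = \Theta(n^{k-t})$, with constants depending only on $k$, $t$, $d$, and $\alpha_0$.

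Hypothesis (ii) of Theorem~\ref{thm:nibble} is immediate: $T'$ extends to at most $n^{k-t}$ such $k$-cliques, so $\deg_{\mathcal{H}^*}(T') \leq n^{k-t} \leq CD$ for a constant $C$. Hypothesis (iii) is also routine: distinct $T_1', T_2'$ in the same part share no edge of $\mathcal{H}^*$, while for $T_1 \neq T_2$ we have $|T_1 \cup T_2| \geq t+1$, so at most $n^{k-t-1} = o(D)$ such $k$-cliques contain both. The main obstacle is hypothesis (i): the degree of a fixed $T' \in G(T)^*$ is a sum $\sum_{K' \supseteq T'} \mathbf{1}[K' \in \mathcal{H}^*]$ of weakly correlated indicators, where two indicators are independent unless the corresponding $k$-cliques share a $t$-subclique beyond $T'$, equivalently meet in at least $t+1$ vertices. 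The number of such overlapping pairs through $T'$ is $O(n^{2(k-t)-1})$, which yields variance $o(D^2)$; Chebyshev's inequality then gives $\deg(T') = (1 \pm \gamma')D$ with probability $1 - O(1/n)$ for each typical $T'$, and Markov's inequality on the number of atypical vertices ensures that all but $\gamma' |V^*|$ of them have this degree with probability $1 - O(1/n)$.

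Union-bounding over the $O(1)$ failure events gives, with probability at least $1 - 1/n$ for $n$ sufficiently large, a sampled $\mathcal{H}^*$ satisfying the three hypotheses of Theorem~\ref{thm:nibble} with $\gamma'$ chosen suitably small in terms of $\delta$, $k$, and $t$. Applying the theorem yields a matching $\mathcal{M}$ in $\mathcal{H}^*$ of size at least $(1 - \delta')|V^*|/\binom{k}{t}$; taking $\delta'$ small in terms of $\delta$ makes the $\mathcal{S}_1$ and $\mathcal{S}_2$ defined from $\mathcal{M}$ meet conclusions (i)--(iii). The principal difficulty is the degree-concentration step, where controlling the overlap structure of $k$-cliques through a fixed $T'$ relies on the detailed counting afforded by Lemma~\ref{lem:regularity_counting}; the remaining steps are routine given that lemma.
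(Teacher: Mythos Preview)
Your overall architecture---build the $\binom{k}{t}$-uniform auxiliary hypergraph on the sampled $t$-cliques and apply Theorem~\ref{thm:nibble}---is exactly the paper's, and your Chebyshev-plus-Markov treatment of hypothesis~(i) is a legitimate alternative to the paper's use of McDiarmid's inequality there.

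The gap is in your verification of hypothesis~(ii). The trivial bound $\deg_{\mathcal{H}^*}(T') \le n^{k-t}$ does give $\deg \le CD$, but only with $C = n^{k-t}/D = \Theta\bigl(\alpha^{-(\binom{k}{t}-1)}\bigr)$, which blows up as $\alpha_0 \to 0$. This matters because of the quantifier order in the lemma: $\ep_0$ must depend only on $k,d,\delta$, \emph{not} on $\alpha_0$. Tracing the dependencies, Theorem~\ref{thm:nibble} produces $\gamma' = \gamma'(r,C,\delta')$; to get expected degrees inside $(1\pm\gamma')D$ for the typical $t$-cliques you must take the counting-lemma accuracy $\gamma$ small relative to $\gamma'$; and $\ep_0$ comes from Lemma~\ref{lem:regularity_counting} as a function of $\gamma$. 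So a $C$ depending on $\alpha_0$ forces $\ep_0$ to depend on $\alpha_0$, which the lemma forbids (and which would make the downstream application in Lemma~\ref{lem:frac_to_int} circular, since there $\alpha_0$ is chosen in terms of $\ell$, which in turn depends on $\ep$). The paper resolves this by proving concentration with an exponential tail via McDiarmid's inequality, so that a union bound over all $O(n^t)$ vertices shows every $T'$ has $\deg_{\mathcal{H}^*}(T')$ within a constant factor of its expectation; since that expectation is at most a $(k,d)$-constant times $D$, one obtains $C = C(k,t,d)$ independent of $\alpha_0$. Your second-moment method cannot survive that union bound, so as written the proof does not establish the lemma with the stated dependence of $\ep_0$.
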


Note that, crucially, $\alpha_0$ in Lemma~\ref{lem:random_nibble} may be taken arbitrarily close to $0$, bounded away from zero independently of only $n$. This is important, because later $\alpha$ may depend on these regularity lemma parameters in a way we cannot meaningfully control (that is, $\alpha$ will be inversely polynomial in $\ell$, the number of parts in the regularity partition, which may be an exponential tower of height inversely polynomial in $\ep$; see the remark at the end of Section~\ref{sec:asymptotic:frac_to_int}). Also note that $1-1/n$ is not the optimal probability --- what we really get is an error probability exponentially small in $n$. 

When $t=2$, we recover the main technical lemma of Yuster \cite{Y} in the special case of clique packings. Conceptually, the $t=2$ case may be simpler than the $t>2$ case, as the randomly thinned $\mathcal{H}_K$ is just a random spanning subgraph of our $\ep$-regular partition, and so an off-the-shelf counting lemma could be applied. We note that there are dependencies among the indicator random variables whose sum is the degree of a given vertex in the randomly thinned $\mathcal{H}_K$. In order to estimate these degrees, Yuster splits each of these random variables into classes of independent variables and applies the Chernoff bound to these classes. We instead apply McDiarmid's concentration inequality~\cite{MD} for Lipschitz functions of independent random variables. We give these inequalities now.

\begin{proposition}[Chernoff bound]\label{prop:chernoff}
Let $X$ be the sum of finitely many independent indicator random variables. Then for every $0 \leq \eta < 1$, we have
\[ \mathbb{P}\left( |X-\mathbb{E}[X]| \geq \eta \mathbb{E}[X]\right) \leq 2 \exp\left( - \frac{\eta^2}{3} \mathbb{E}[X]  \right). \]
\end{proposition}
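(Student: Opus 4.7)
The plan is to use the standard Chernoff exponential moment method, applied separately to the upper and lower tails and then combined via a union bound. Write $X = \sum_{i=1}^N X_i$ with $X_i$ independent Bernoulli with $\mathbb{P}(X_i = 1) = p_i$, and set $\mu = \mathbb{E}[X] = \sum_i p_i$. For any real $s$, independence gives
\[
\mathbb{E}[e^{sX}] = \prod_{i=1}^N \mathbb{E}[e^{sX_i}] = \prod_{i=1}^N \bigl( 1 + p_i(e^s - 1) \bigr) \le \prod_{i=1}^N \exp\bigl( p_i (e^s - 1) \bigr) = \exp\bigl( \mu(e^s - 1) \bigr),
\]
using $1 + x \le e^x$. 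This is the single MGF estimate that drives both tails.

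For the upper tail, I would apply Markov's inequality to $e^{sX}$ with $s > 0$ to obtain
\[
\mathbb{P}(X \ge (1+\eta)\mu) \le \frac{\mathbb{E}[e^{sX}]}{e^{s(1+\eta)\mu}} \le \exp\bigl( \mu ( e^s - 1 - s(1+\eta) ) \bigr),
\]
and then minimize the right-hand side by taking $s = \ln(1+\eta)$, which yields the exponent $\mu\bigl(\eta - (1+\eta)\ln(1+\eta)\bigr)$. Similarly, for the lower tail I would apply Markov to $e^{-sX}$ with $s > 0$ and choose $s = -\ln(1-\eta)$ to obtain the exponent $\mu\bigl(-\eta - (1-\eta)\ln(1-\eta)\bigr)$. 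A union bound over the two tails produces the factor of $2$ out front.

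The remaining analytic step is to verify that both exponents are bounded above by $-\eta^2\mu/3$ for all $0 \le \eta < 1$. For the upper tail this reduces to showing $(1+\eta)\ln(1+\eta) - \eta \ge \eta^2/3$ on $[0,1)$, which follows by setting $f(\eta) := (1+\eta)\ln(1+\eta) - \eta - \eta^2/3$, noting $f(0) = f'(0) = 0$, and checking that $f''(\eta) = \tfrac{1}{1+\eta} - \tfrac{2}{3} \ge 0$ on $[0,1/2]$; for $\eta \in [1/2, 1)$ one verifies $f(\eta) \ge 0$ directly, e.g.\ using that $f(1/2) > 0$ and comparing derivatives, or by using a Taylor expansion with Lagrange remainder. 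The lower-tail exponent actually satisfies the stronger bound $-\eta^2\mu/2$, which is a textbook calculation from the series expansion of $(1-\eta)\ln(1-\eta)$, so the weaker $-\eta^2\mu/3$ bound holds a fortiori. The main (only) obstacle is this elementary real-analysis verification; the probabilistic content is completely routine.
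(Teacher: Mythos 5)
Your proof is correct, and it is the standard exponential-moment (Bernstein/Chernoff) argument; the paper itself states Proposition~\ref{prop:chernoff} as a known fact and gives no proof, so there is nothing to diverge from. The only step you leave slightly informal is the verification of $(1+\eta)\ln(1+\eta)-\eta\ge\eta^2/3$ on $[1/2,1)$, but it closes easily: with $f(\eta)=(1+\eta)\ln(1+\eta)-\eta-\eta^2/3$ one has $f'(\eta)=\ln(1+\eta)-2\eta/3$, which is increasing on $[0,1/2]$ and decreasing on $[1/2,1]$ with $f'(0)=0$ and $f'(1)=\ln 2-2/3>0$, so $f'\ge 0$ and hence $f\ge f(0)=0$ on all of $[0,1]$; together with the textbook lower-tail bound $\exp(-\eta^2\mu/2)$ and the union bound, this yields the stated inequality.
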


\begin{theorem}\label{thm:McDiarmid}
Let $f : \cX_1 \times \dots \times \cX_n \to \R$ and let $X_1,\ldots,X_n$ be independent random variables such that $X_i \in \cX_i$ for every $i \in [n]$. If for all $i \in [n]$ and for all $x_1 \in \cX_1, \dots, x_n \in \cX_n$, the function $f$ satisfies 
\begin{equation*}
   \sup_{x'_i \in \cX_i} \left|f(x_1,\dots, x_{i-1},x_i, x_{i+1}, \dots, x_n) - f(x_1,\dots, x_{i-1},x_i', x_{i+1}, \dots, x_n)\right| \leq b_i  
\end{equation*}
for some $b_1, \dots, b_n \in \R_{\geq 0}$, then for every $\mu > 0$,
\begin{align*}
\pr\left(\left|f(X_1, \dots, X_n) - \E[ f(X_1, \dots, X_n)]\right| \geq \mu \right) \leq 2 \exp\left(- \dfrac{2\mu^2}{\sum_{i=1}^n b_i^2}\right).    
\end{align*}
\end{theorem}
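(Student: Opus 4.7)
The plan is to prove McDiarmid's bounded-differences inequality by constructing a Doob martingale and applying an Azuma--Hoeffding-type estimate. Set $Z_0 = \E[f(X_1,\dots,X_n)]$ and $Z_i = \E[f(X_1,\dots,X_n) \mid X_1,\dots,X_i]$ for $1 \le i \le n$, so that $Z_n = f(X_1,\dots,X_n)$ and $(Z_i)_{i=0}^n$ is a martingale with respect to the filtration generated by the $X_j$. The goal then becomes to show that $Z_n - Z_0$ is tightly concentrated about $0$, and the required two-sided tail will follow by applying the one-sided bound to both $f$ and $-f$.

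The key step is to bound the conditional range of each martingale increment by $b_i$. By independence of the $X_j$, one can write $Z_i = g_i(X_1,\dots,X_i)$ where
\[
g_i(x_1,\dots,x_i) = \E\bigl[f(x_1,\dots,x_i,X_{i+1},\dots,X_n)\bigr].
\]
The bounded-differences hypothesis on $f$, integrated over $X_{i+1},\dots,X_n$, immediately yields
\[
\sup_{x_i, x_i' \in \cX_i} \bigl| g_i(x_1,\dots,x_{i-1},x_i) - g_i(x_1,\dots,x_{i-1},x_i') \bigr| \le b_i.
\]
Hence, conditional on $X_1,\dots,X_{i-1}$, the random variable $Z_i$ takes values in an interval of length at most $b_i$. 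Since $Z_{i-1} = \E[Z_i \mid X_1,\dots,X_{i-1}]$, the increment $Z_i - Z_{i-1}$ is a mean-zero random variable (conditionally on $X_1,\dots,X_{i-1}$) supported in such an interval.

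The final step is Hoeffding's lemma: a mean-zero random variable $Y$ with $Y \in [a,a+b_i]$ satisfies $\E[e^{\lambda Y}] \le \exp(\lambda^2 b_i^2/8)$ for every $\lambda \in \R$. Applied conditionally and iterated via the tower property,
\[
\E\bigl[e^{\lambda(Z_n - Z_0)}\bigr] \le \exp\!\Bigl(\tfrac{\lambda^2}{8} \textstyle\sum_{i=1}^n b_i^2\Bigr).
\]
A Markov/Chernoff argument then gives $\pr(Z_n - Z_0 \ge \mu) \le \exp(-\lambda \mu + \tfrac{\lambda^2}{8}\sum b_i^2)$ for every $\lambda > 0$; optimizing at $\lambda = 4\mu / \sum b_i^2$ produces the one-sided bound $\exp(-2\mu^2 / \sum b_i^2)$, and combining with the analogous inequality for $-f$ yields the stated factor of $2$.

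The only conceptually nontrivial point is the passage from the deterministic Lipschitz condition on $f$ to a pointwise Lipschitz condition on the conditional expectation $g_i$; this is where independence of the $X_j$ is essential, since it allows integrating out $X_{i+1},\dots,X_n$ without interaction with the conditioning. Everything else — Hoeffding's lemma and the martingale optimization — is routine, so the main ``obstacle'' is really just invoking Hoeffding's lemma cleanly; no combinatorial input is needed beyond what appears in the hypothesis.
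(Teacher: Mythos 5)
Your proof is correct: the Doob martingale $Z_i = \E[f(X_1,\dots,X_n)\mid X_1,\dots,X_i]$, the passage from the bounded-differences condition on $f$ to the conditional increments via independence, Hoeffding's lemma applied conditionally, and the optimization at $\lambda = 4\mu/\sum_i b_i^2$ together give exactly the stated bound, with the factor $2$ from the union of the two one-sided tails. Note that the paper does not prove this statement at all --- it is quoted as McDiarmid's bounded-differences inequality with a citation to the original reference --- and your argument is precisely the standard martingale (Azuma--Hoeffding) proof of that result, so there is nothing to reconcile with the paper's treatment.
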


\begin{proof}[Proof of Lemma \ref{lem:random_nibble}]
Throughout the proof, we will show that several events occur with high probability. Whenever this happens, we will tacitly condition on these events occurring, omitting this conditioning from our notation.

Let $1\le t< k$ be integers, and let $\alpha_0, d, \delta > 0$ be given. Let $\alpha_0 \leq \alpha \le d^{\binom{t}{2}}$. Suppose
\begin{equation}\label{eq:C_and_delta_selection}
    C := C(t,k,d) = 2^{\binom{k}{t}} d^{-2\binom{k}{2}\binom{k-2}{t-2}}\quad\quad  \text{ and } \quad\quad \delta' < \frac{\delta}{2} \binom{k}{t}^{-1}. 
\end{equation}
Let $\gamma'({\binom{k}{t}},C,\delta')$ and $D_0({\binom{k}{t}},C,\delta')$ be the required parameters such that every $n'$-vertex, ${\binom{k}{t}}$-uniform hypergraph satisfying  conditions  (i), (ii),  and (iii) of Theorem~\ref{thm:nibble} has a matching of size at least $(1 - \delta')n'{\binom{k}{t}}^{-1}$. We plan to apply Theorem~\ref{thm:nibble} to the $\binom{k}{t}$-partite $\binom{k}{t}$-uniform hypergraph $\mathcal{H}'$ whose parts are the sets  $G(T)^*$ and whose edges are the collections of $t$-cliques which are subcliques of the same $k$-clique. We will define $\mathcal{H}'$ precisely later. Since each $G(T)^*$ has size around $\alpha n^t$ (which will be shown to hold with high probability using the Chernoff bound), Theorem~\ref{thm:nibble} will yield $(1-\delta') \alpha n^t$ pairwise disjoint edges of $\mathcal{H}'$, which will correspond to $(1-\delta') \alpha n^t$ $k$-cliques of $G$, composing (the partition of) $\mathcal{S}_1$.
 
 First, we set up some preliminaries. For $Q \subseteq [k]$, let
\[ d_{Q} = \prod_{\{i,j\} \in \binom{Q}{2}} d_{i,j} \quad \quad \text{ and } \quad\quad \hat{d}_Q = \prod_{\{i,j\} \in \binom{Q}{2}} d_{i,j}^{\binom{q-2}{t-2}} = d_{Q}^{\binom{q-2}{t-2}}.\]
By repeated applications of Lemma~\ref{lem:regularity_counting}, we may suppose that for every $\gamma > 0$ there exists $\ep_1(k,d,\gamma)$ such that for every $\ep < \ep_1$ and $Q \subseteq [k]$ of size  $ q$ we have
\begin{equation}\label{eq:sizeG(T)}
    |G(Q)| = (1 \pm \gamma^2) n^q d_{Q}.
\end{equation}

 For  $\tilde{T} \in \binom{[k]}{t}$, we denote by $\deg_G(\tilde{T})$  the number of $k$-cliques in $G$ containing $\tilde{T}$. Furthermore, we say  $\tilde{T}$ is $\gamma$-\textit{good} if  $\deg_{G}(\tilde{T}) = (1 \pm \gamma) n^{k-t} \dfrac{d_{[k]}}{d_T}$.  For  ${T} \in \binom{[k]}{t}$ let $G(T)_\gamma$ be the set of  $\gamma$-good $\tilde{T} \in G(T)$. If $\tilde{T} \in G(T)$ is not $\gamma$-good, then we say it is $\gamma$-\emph{bad}, and we let $G(T)_{\bar{\gamma}} = G(T) \setminus G(T)_\gamma$ denote the set of  $\gamma$-bad cliques. Again, by repeated applications of  Lemma~\ref{lem:regularity_counting}, we may suppose that there exists an $\varepsilon_2(k,d,\gamma)$ such that for all $T \in \binom{[k]}{t}$,  if $\ep < \ep_2$, then 
\begin{equation}\label{eq:size-bad-G(S)}
|G(T)_{\bar{\gamma}}| \leq \gamma^2 n^{t} d_T.
\end{equation} 
From now on, we fix $\gamma = \gamma(k,t,d,\delta,\gamma') < 1/2$ and $\delta' = \delta'(k,t,\delta)$; such that  the following holds:
\begin{equation}\label{eq:gamma_1}
(1+\gamma)\gamma \leq 1 - \gamma, \quad\quad\  \text{ and } \quad\quad (1 \pm \gamma^2)^{-1} = (1 \pm \gamma),
\end{equation}
\begin{equation}\label{eq:gamma_2}
    \gamma/(1 - 2 \gamma) \le \gamma', \quad\quad\quad \ \  \text{ and } \quad\quad  (1 \pm \gamma)^{{\binom{k}{t}} + 1} = (1 \pm \gamma'),
\end{equation}
\begin{equation}\label{eq:gamma_3}
    (1 \pm 2\gamma) = (1 \pm \delta), \quad\quad \text{ and }  \quad\quad \gamma{\binom{k}{t}}2(1 - 2\gamma)^{-1} \le \delta.
\end{equation}
We do not attempt to optimize these calculations by removing redundant constraints. We will also fix $\ep_0(k,d,\delta) = \min\{\ep_1, \ep_2\}$ and suppose $\ep < \ep_0$.

Recall that $G(T)^*$ is a random subset of $G(T)$ where each $t$-clique is included with probability $p_T = \frac{\alpha n^t}{|G(T)|}$. Define
\[ G([k])^* = \left\{ \tilde{K} \in G([k]) :\  \forall\  T \in \binom{[k]}{t}, \ \forall \ \tilde{T} \in G(T),\  \tilde{T} \subseteq \tilde{K} \implies \tilde{T} \in G(T)^* \right\} .\]

Let $\mathcal{H}$ be the ${\binom{k}{t}}$-uniform hypergraph such that $V(\mathcal{H}) = \bigcup \{ G(T)^*  : T \in {\binom{k}{t}}\}$  and  every $\tilde{K} \in G([k])^*$ forms a hyperedge of $\mathcal{H}$, whose vertices are the $t$-cliques contained in $\tilde{K}$. Note that $\deg_{\cH}(\tilde{T})$ is the number of $\tilde{K} \in G([k])^*$ containing $\tilde{T}$.  

Now we  prove that $\deg_{\cH}(\tilde{T})$ is concentrated around its mean. Observe that $|G(T)^*|$ for $T \in \binom{K}{t}$ is the sum of independent indicator random variables. Applying the  Chernoff bound (Proposition~\ref{prop:chernoff}, with $\gamma$ serving the role of $\eta$), for every $T \in {\binom{k}{t}}$,  with probability at least $1 - \exp\left( - \Omega(n) \right)$, we have 
\begin{equation}\label{eq:conc_G(T)}
    |G(T)^*| = (1 \pm \gamma) p_T |G(T)| =  (1 \pm \gamma) \alpha n^t,
\end{equation}
where the last equality follows from the definition of $p_T$.

We now bound $|G(T)^*_{\bar{\gamma}}|$, where $G(T)^*_{\bar{\gamma}} = G(T)^* \cap G(T)_{\bar{\gamma}}$. If $|G(T)_{\bar{\gamma}}| \le n$, then as $t \ge 2$, and $\alpha \ge \alpha_0$, for sufficiently large $n$, we have 
\[ |G(T)^*_{\bar{\gamma}}| \le |G(T)_{\bar{\gamma}}| \le n \le \gamma \alpha_0n^t \le \gamma \alpha n^t.\]
If $|G(T)_{\bar{\gamma}}| \ge n$, then by a Chernoff bound, we have that, with high probability,
\[ |G(T)^*_{\bar{\gamma}}| \le (1 + \gamma) p_T |G(T)_{\bar{\gamma}}|.\]

Recall that by \eqref{eq:size-bad-G(S)} we have $|G(T)_{\bar{\gamma}}| \le \gamma^2 n^td_T$ and by \eqref{eq:sizeG(T)} we have $|G(T)| \ge (1 - \gamma)n^td_T$. Again, using the definition of $p_T$,  
\[ |G(T)^*_{\bar{\gamma}}| \le (1 + \gamma) \alpha n^t \frac{|G(T)_{\bar{\gamma}}|}{|G(T)|} \le \frac{(1 + \gamma) \gamma^2}{1 - \gamma}  \alpha n^t \le \gamma \alpha n^t, \]
where the last inequality follows from \eqref{eq:gamma_1}.  In both cases, if $n$ is sufficiently large, with high probability, we have
\begin{equation}\label{eq:conc_G(T)atypical}
    |G(T)^*_{\bar{\gamma}}| \le  \gamma \alpha n^t.
\end{equation}

  Now we estimate the degree of $\tilde{T} \in V(\cH)$. Let $\kappa_t$ be the number of $t$-cliques in $G$, i.e., $\kappa_t = \left|\cK_t(G)\right|$. 
 Fix a $\tilde{T} \in G(T)$, let $\tilde{T}_1, \dots, \tilde{T}_{\kappa_t - 1}$ be an arbitrary ordering of the other   $t$-cliques of $G$. We define the function $$f_{\tilde{T}} : \{0,1\}^{\kappa_t - 1} \to \R,$$
where $f_{\tilde{T}}(x_{\tilde{T}_1}, \dots, x_{\tilde{T}_{\kappa_t - 1}})$ is equal to the number of $k$-cliques $\tilde{K} \in G(K)$ such that $\tilde{T} \subseteq \tilde{K}$ and $x_{\tilde{T}_i} = 1$ for $\tilde{T}_i \subseteq \tilde{K}$. 
For all $i$, let 
\begin{equation*}
  b_i = n^{k + z - 2t},  
\end{equation*}
where $|\tilde{T} \cap \tilde{T}_i| = z$, and note that $1 \leq z \leq t-1$.
By definition of $f_{\tilde{T}}$, for all $i \in [\kappa_t - 1]$, we have 
\begin{equation*}
|f_{\tilde{T}}(x_{\tilde{T}_1}, \dots, x_{\tilde{T}_{i-1}}, 1, x_{\tilde{T}_{i+1}}, \dots, x_{\tilde{T}_{\kappa_t - 1}}) - f_{\tilde{T}}(x_{\tilde{T}_1}, \dots, x_{\tilde{T}_{i-1}}, 0, x_{\tilde{T}_{i+1}}, \dots, x_{\tilde{T}_{\kappa_t - 1}})| \leq b_i.
\end{equation*}

 Also, for each nonnegative $z \leq t - 1$, the number of $\tilde{S} \in \tcliques{t}{G}$ such that $|\tilde{T} \cap \tilde{S}| = z$ is at most $\binom{t}{z}{\binom{k-t}{t - z}}n^{t-z} \le \binom{k}{t}n^{t-z}$. Therefore, we have 
\begin{equation}\label{eq:sum-ci^2}
  \sum_{i = 1}^{\kappa_t - 1} b_i^2 \leq \sum_{z = 0}^{t-1} \binom{k}{t} n^{t-z} \cdot n^{2(k+z - 2t)}  = \binom{k}{t}  \sum_{z = 0}^{t-1}n^{2k + z - 3t} \le t2^kn^{2k -2t - 1}.  
\end{equation} 
For each $i \in [\kappa_t - 1]$, let $X_i$ be the indicator random variable for the event  that $\tilde{T_i}$ is in $V(\cH)$. Hence, conditioning on $\tilde{T} \in V(\cH)$, 
\[ \deg_{\cH}(\tilde{T})  = f_{\tilde{T}}(X_1, \dots, X_{\kappa_t - 1}).\]
 For each $\tilde{K} \in \tcliques{k}{G}$, let $Y_{\tilde{K}}$ be the indicator random variable that   $\tilde{K}$'s  corresponding edge is  included in $\cH$, i.e.,  $X_i = 1$ for all $\tilde{T}_i \subseteq \tilde{K}$. As \[\pr\left(Y_{\tilde{K}} = 1 | \ \tilde{T} \in V(\cH) \right) = \prod_{\substack{S \in \binom{K}{t}\\ S \neq T}} p_S = \prod_{\substack{S \in \binom{K}{t}\\ S \neq T}} \frac{\alpha n^t}{|G(S)|},\] we have 
\begin{align}\label{eq:expectation-deg(T)}
& \E[\deg_{\cH}(\tilde{T})| \ \tilde{T} \in V(\cH)]\  =\   \sum_{\substack{\tilde{K} \in G(K)\\ \tilde{T} \subseteq \tilde{K}}} \E[Y_{\tilde{K}}|\ \tilde{T} \in V(\cH)]  
\ =\  \deg_{G}(\tilde{T})  \prod_{\substack{S \in \binom{K}{t} \\ S \neq T}} \frac{\alpha n^t}{|G(S)|} \notag\\
 =\  & \frac{1}{(1\pm \gamma^2)^{{\binom{k}{t}}-1}} \deg_{G}(\tilde{T}) \alpha^{\binom{k}{t}-1} \dfrac{d_T}{\hat{d}_{K}}\  = \ (1 \pm \gamma)^{{\binom{k}{t}} - 1} \deg_G(\tilde{T}) \alpha^{{\binom{k}{t}} - 1} \frac{d_T}{\hat{d}_K},
\end{align}
where the second to last equality follows from \eqref{eq:sizeG(T)}, i.e., $|G(S)| = (1 \pm \gamma^2)n^{|S|}d_S$ and the last equality follows from $\gamma$ being sufficiently small, see \eqref{eq:gamma_1}.  

We are now ready to establish that condition (i) of Theorem~\ref{thm:nibble} holds with high probability, that is, that almost all $\tilde{T} \in V(\cH)$ have nearly the same degree, which is
\begin{equation}\label{eq:D_selection}
   D := n^{k-t} \alpha^{\binom{k}{t}-1} \frac{d_{K}}{\hat{d}_K}.
\end{equation} 
By the definition of $\gamma$-good, for every  $\tilde{T} \in G(T)^*_\gamma$, we have $\deg_G(\tilde{T}) = (1 \pm \gamma)n^{k-t} \frac{d_{K}}{d_T}$. It follows from \eqref{eq:expectation-deg(T)} that 
 
 \begin{align*}
    \E[\deg_{\cH}(\tilde{T})|\ \tilde{T} \in V(\cH)] = (1 \pm \gamma)^{{\binom{k}{t}} - 1} \deg_{G}(\tilde{T}) \alpha^{\binom{k}{t}-1} \frac{d_{T}}{\hat{d}_{K}} = (1 \pm \gamma)^{{\binom{k}{t}}}  n^{k-t} \alpha^{\binom{k}{t}-1} \frac{d_{K}}{\hat{d}_{K}}.
 \end{align*}
As $d_K/\hat{d}_K \ge 1$, we  conclude,
    \begin{align*}
        \E[\deg_{\cH}(\tilde{T})|\tilde{T} \in V(\cH)] \ge (1 - \gamma)^{\binom{k}{t}}\alpha^{\binom{k}{t}-1} n^{k-t}.
    \end{align*}
Combining with \eqref{eq:sum-ci^2}, we obtain 
\begin{equation*}
    \frac{\left(\E[\deg_{\cH}(\tilde{T})\big|\tilde{T} \in V(\cH)]\right)^2}{\sum_{i} b_i^2}\  \geq \  \frac{(1-\gamma)^{2\binom{k}{t}}\alpha^{2\binom{k}{t}-2}n^{2(k-t)}}{t2^kn^{2k-2t-1}}\  =\  (1-\gamma)^{2\binom{k}{t}}\alpha^{2\binom{k}{t}-2}\frac{n}{ t2^{k}} .
\end{equation*}

Conditioning on $\tilde{T} \in V(\cH)$, $\deg_{\cH}(\tilde{T})$ is a function of independent random variables as described before. By McDiarmid's inequality (Theorem~\ref{thm:McDiarmid}), we have 

\begin{align*}
\pr & \left(\left|\deg_{\cH}(\tilde{T}) - \E[\deg_{\cH}(\tilde{T})]\right|\  \geq \  \gamma \cdot  \E[\deg_{\cH}(\tilde{T})] \bigg| \tilde{T} \in V(\cH) \right)\\
\leq  & \  2 \exp\left( - {2\gamma^2 \left(\E[\deg_{\cH}(\tilde{T})|\tilde{T} \in V(\cH)]\right)^2}/\ {\sum_{i} b_i^2}\right)\\
\leq & \ 2 \exp\left(-2\gamma^2(1-\gamma)^{2\binom{k}{t}}\frac{\alpha^{2\binom{k}{t}-2}}{ t2^{k}} n\right).
\end{align*}
This shows conditioning on $\tilde{T} \in G(T)_{\gamma}^*$, with probability at least $1 - \exp(-\Omega(n))$, 

\begin{align*}
  \deg_{\cH}(\tilde{T}) &= (1 \pm \gamma) \E [\deg_{\cH}(\tilde{T})|\tilde{T} \in V(\cH)]  =   (1 \pm \gamma)^{{\binom{k}{t}} + 1} n^{k-t} \alpha^{\binom{k}{t}-1} \frac{d_{K}}{\hat{d}_{K}} = (1 \pm \gamma)^{{\binom{k}{t}} + 1} D.
\end{align*}

Hence,  with probability $1 - {\binom{k}{t}}n^t \exp\left(-\Omega(n)\right)$, we have $\deg_{\cH}(\tilde{T}) = (1 \pm \gamma)^{{\binom{k}{t}} + 1}D$ for all $\gamma$-good cliques $\tilde{T} \in V(\cH)$, which are all but $\gamma \binom{k}{t} \alpha n^t$ (by~\eqref{eq:conc_G(T)atypical}) of the $(1\pm \gamma) \binom{k}{t} \alpha n^t$ total vertices (by~\eqref{eq:conc_G(T)}), verifying condition (i) of Theorem~\ref{thm:nibble}.

Next, we prove that condition (ii) of Theorem \ref{thm:nibble} holds for \textit{every} $\tilde{T} \in G(T)^*$. First, if there are at most $D =  n^{k-t} \alpha^{\binom{k}{t}-1} \frac{d_{K}}{\hat{d}_{K}}$ $k$-cliques in $G$ containing  $\tilde{T}$, then  we are done. Thus, we only need to consider the case when $\deg_G(\tilde{T}) \geq D.$ It follows from \eqref{eq:expectation-deg(T)} that 
\[   \E[\deg_{\cH}(\tilde{T})| \tilde{T} \in V(\cH)] \geq (1 - \gamma)^{{\binom{k}{t}} - 1} \deg_G(\tilde{T}) \alpha^{{\binom{k}{t}} - 1} \frac{d_T}{\hat{d}_K} \ge (1 - \gamma)^{{\binom{k}{t}} - 1} n^{k-t} \alpha^{2{\binom{k}{t}} - 2} \frac{d_Td_K}{\hat{d}_K^2}.\]
Hence, in this case, by \eqref{eq:sum-ci^2}, 
\begin{align*}
   &  {\left( \E[\deg_{\cH}(\tilde{T})| \tilde{T} \in V(\cH)]\right)^2}/\ {\sum_{i} b_i^2} \geq \ \left( (1 - \gamma)^{{\binom{k}{t}} - 1} n^{k-t} \alpha^{2{\binom{k}{t}} - 2 } {d_Td_K}{\hat{d}_K^{-2}}\right)^2t^{-1}2^{-k}n^{2t -2k +1}\\  = & \ (1 - \gamma)^{2{\binom{k}{t}} - 2} \alpha^{4\binom{k}{t}-4}2^{-k}t^{-1}{d_T^2d_K^2}\hat{d}_K^{-4} n
    \ge \ \alpha^{4{\binom{k}{t}} - 4} 2^{-2{\binom{k}{t}} -k + 2} t^{-1}{d_T^2d_K^2}\hat{d}_K^{-4} n,
\end{align*}
where the last inequality follows as $\gamma \le 1/2$.

Therefore, by McDiarmid's inequality and \eqref{eq:expectation-deg(T)}, with probability at least $1 - \exp(-\Omega(n))$,
\begin{align*}
    \deg_{\cH}(\tilde{T}) \leq (1+\gamma) \E[ \deg_{\cH}(\tilde{T})] &\le (1 + \gamma)^{{\binom{k}{t}}}\deg_{G}(\tilde{T}) \alpha^{\binom{k}{t}-1} \frac{d_{T}}{\hat{d}_{K}} \\
    &\leq (1 + \gamma)^{{\binom{k}{t}}} n^{k-t} \alpha^{\binom{k}{t}-1} \frac{d_{T}}{\hat{d}_{K}}  \quad \quad\text{[because  $\deg_{G}(\tilde{T}) \leq n^{k-t}$]}  \\ 
    &=  (1 + \gamma)^{{\binom{k}{t}}}  \frac{d_T}{d_K\hat{d}_{K}} D \leq 2^{\binom{k}{t}} d^{-2\binom{k}{2}\binom{k-2}{t-2}} D = CD, 
\end{align*}
where the last inequality follows from $\gamma \le 1$ and $1 \ge d_{i,j} \geq d$ for all $1 \leq i < j \leq k$. Taking a union bound over all $O(n^t)$ choices of $\tilde{T}$ shows that the maximum degree of $\cH$ is bounded by $CD$, that is, condition (ii) of Theorem~\ref{thm:nibble} holds, with high probability.

Finally we establish condition  (iii) of Theorem~\ref{thm:nibble}, that is, for every $T_1, T_2 \in \binom{[k]}{t}$,  the codegree of $\tilde{T}_1 \in G(T_1)^*$ and $\tilde{T}_2 \in G(T_2)^*$ is small compared to $D$. Their codegree is the number of $\tilde{K} \in G(K)^*$ which contain $\tilde{T}_1 \cup \tilde{T}_2$, which is trivially at most $n^{k-|T_1 \cup T_2|} \leq n^{k-t-1}$. Thus, for  $n$ is sufficiently large  in terms of $d$, $k$, and $\alpha_0$, and $\delta'$, we have 
\[ n^{k-t-1} < \gamma' n^{k-t} \alpha_0^{\binom{k}{t}-1} d^{\binom{k}{2}\left(1-\binom{k-2}{t-2}\right)}  \leq \gamma' n^{k-t} \alpha_0^{\binom{k}{t}- 1} \frac{d_{K}}{\hat{d}_{K}}   \le \gamma' D,\]
establishing condition (iii) of Theorem~\ref{thm:nibble}.

Thus for $n$ sufficiently large with respect to $d$, $k$, and $\alpha_0$, by Theorem \ref{thm:nibble}, we  conclude that $\mathcal{H}$ has an edge packing of size at least $(1 - \delta') |V(\mathcal{H})|/{\binom{k}{t}}$ with high probability. Let $\mathcal{S}_1$ be the set of $t$-cliques covered in this packing, and let $\mathcal{S}_2 = \bigcup_{T \in \binom{[k]}{t}} \left(G(T)^* \setminus \mathcal{S}_1 \right)$.

We now show that $\mathcal{S}_1$ and $\mathcal{S}_2$ satisfy properties (i), (ii), and (iii) of Lemma~\ref{lem:random_nibble}, completing the proof. By definition of $\mathcal{S}_1$, (i) holds. 
As $\gamma$ is sufficiently small (see \eqref{eq:gamma_3}), we have 
\[ |\mathcal{S}_1| = \binom{k}{t} (1\pm \delta') |V(\mathcal{H})|/\binom{k}{t} = (1 \pm \delta){\binom{k}{t}}\alpha n^t,\]
implying (ii). Since $\mathcal{S}_2$ is the set of uncovered vertices of $V(\cH)$, using that $\gamma$ is sufficiently small (see \eqref{eq:gamma_3}), we may conclude
\[ |\mathcal{S}_2| \leq (1+\gamma) \binom{k}{t} \alpha n^t - |\mathcal{S}_1| \le \delta \alpha n^t. \qedhere\] 
\end{proof}

\subsection{Fractional to integer covers for cleaned graphs}\label{sec:asymptotic:frac_to_int}

Recall that an $(n,\ell,\ep,d)$-graph $G$ has $n$ vertices which can be equi-partitioned into independent sets $V_1, \dots, V_{\ell}$ such that for every $1 \leq i < j \leq \ell$, either $(V_i,V_j)$ is an $\ep$-regular pair with density at least $d$ or $V_i \cup V_j$ is an independent set. Here we show that such graphs have fractional $t$-clique cover number and decomposition number are close to their integer $t$-clique cover number and decomposition number, respectively. Furthermore, this closeness is effectively bounded in terms of the clique number of the graph. 

\begin{lemma}\label{lem:frac_to_int}
Let $t \ge {2}$ be an integer and let $\mathbf{c} = (c_i)_{i = 1}^{\infty}$ be a sequence of positive real numbers, where each $c_i\ge 1$ for all $i \ge t$. For every integer $b$ and pair of  real numbers $0<d,\rho < 1$, there exists $\ep_0 = \ep_0(b,d,(c_i)_{i = t}^b,\rho) > 0$,  and $n_0 = n_0(b,d,(c_i)_{i = t}^b,\rho)$ 
such that the following holds. Let $\ell$ be an integer and  $G$ be an $(n,\ell,\ep,d)$-graph such that $n \ge n_0$, $\omega(G) \leq b$, and $\ep < \ep_0$. Then
\[ \coveropt{t}{G,\mathbf{c}} \leq \coveroptfrac{t}{G,\mathbf{c}} + \rho n^t \]
and
\[ \decompositionopt{t}{G,\mathbf{c}} \leq \decompositionoptfrac{t}{G,\mathbf{c}} + \rho n^t.\]
\end{lemma}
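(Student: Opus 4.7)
The plan follows the strategy sketched at the beginning of Section~\ref{sec:asymptotic}: use an optimal fractional $t$-clique cover (or decomposition) $f^*$ of $G$ to guide a randomized partition of $\tcliques{t}{G}$ among cliques of the reduced graph $R$ of $G$, and then apply Lemma~\ref{lem:random_nibble} to each piece to obtain an integer cover (decomposition) of nearly the same cost.

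Concretely, since $G$ is an $(n,\ell,\ep,d)$-graph, every clique of $G$ projects to a clique of $R$, so $f^*$ may be taken to be supported on $\bigcup_{K \in \cliques{R}} G(K)$. For each $K \in \cliques{R}$ with $|K| \geq t$, set $F_K := \sum_{\tilde K \in G(K)} f^*(\tilde K)$; then $\norm{f^*}_\bc = \sum_K c_{|K|} F_K$, and summing the cover constraint over $\tilde T \in G(T^*)$ for each $T^* \in \tcliques{t}{R}$ yields $\sum_{K \supseteq T^*} F_K \geq |G(T^*)|$, with equality in the decomposition case. I would then independently assign each $\tilde T \in G(T^*)$ to a clique $K \supseteq T^*$ of $R$ with probability $F_K / \sum_{K' \supseteq T^*} F_{K'}$. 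For each such $K$, the set of assigned $\tilde T$'s is an independent random subset of $G(T)$ for every $T \subseteq K$, which plays the role of $G(T)^*$ in Lemma~\ref{lem:random_nibble} applied to the induced subgraph $G[\bigcup_{i \in K} V_i]$. The nibble then produces, for each $K$, an integer packing of roughly $F_K$ many $|K|$-cliques from $G(K)$ covering a $(1-\delta)$ fraction of the assigned $\tilde T$'s for a small $\delta > 0$. Taking the union across all $K$ and covering each remaining $\tilde T$ as a singleton $t$-clique yields an integer cover (or decomposition) of cost at most $\sum_K c_{|K|} F_K + c_t \delta n^t \leq \norm{f^*}_\bc + \rho n^t$ for sufficiently small $\delta$; the decomposition property is preserved because nibble packings are internally disjoint and the random partition prevents overlaps between different $K$'s.

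The main obstacle is matching the sampling structure with the hypothesis of Lemma~\ref{lem:random_nibble}, which requires that the expected sample size from each $G(T)$ for $T \subseteq K$ equal a common value $\alpha_K n^t$. In the decomposition case the exact equality $\sum_{K' \supseteq T} F_{K'} = |G(T)|$ makes this expected value $F_K$ for every $T \subseteq K$, uniform as required. For covers, the excess $\sum_{K'} F_{K'} - |G(T)|$ may vary with $T$, so the expected sample sizes may differ across $T \subseteq K$. I would resolve this by first tightening $f^*$ where possible (reducing $f^*(\tilde T)$ for $\tilde T \in G(T)$ with slack only lowers $\norm{f^*}_\bc$), and, where residual slack remains, by sub-sampling down to the minimum rate across $T \subseteq K$ and absorbing the discarded $\tilde T$'s into the singleton-cover step. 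The bound $\omega(G) \leq b$ keeps $|\cliques{R}| = O(\ell^b)$, so the errors aggregated over all $K$ contribute only $o(n^t)$, which together with a sufficiently small $\delta$ yields the claimed bound.
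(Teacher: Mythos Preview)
Your plan is the paper's plan: project an optimal fractional solution $f^*$ to weights $F_K=\hat f(K)$ on cliques $K$ of the reduced graph $R$, randomly route the $t$-cliques of $G$ to cliques of $R$ according to these weights, run the nibble (Lemma~\ref{lem:random_nibble}) inside each $K$, and cover the leftovers by singleton $t$-cliques. For decompositions your sketch is essentially correct and coincides with the paper's argument, since the identity $\sum_{K'\supseteq T}F_{K'}=|G(T)|$ makes the expected sample from each $G(T)$ into $K$ equal to $F_K$, exactly the uniform-size hypothesis of Lemma~\ref{lem:random_nibble}.

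For covers there is a genuine gap. Write $S_T:=\sum_{K'\supseteq T}F_{K'}\ge|G(T)|$. Your assignment sends a $p_T=F_K/S_T$ fraction of $G(T)$ to $K$, so the expected sample size is $F_K\,|G(T)|/S_T$, which varies with $T\subseteq K$. Your proposed repair, sub-sampling every part down to $\min_{T'\subseteq K}F_K\,|G(T')|/S_{T'}$ and covering the discards by singletons, does not control the number of discards: for a fixed $K$ one $T$ may be tight ($S_T=|G(T)|$, sample $\approx F_K$) while another $T'$ carries large slack ($S_{T'}\gg|G(T')|$, sample $\approx F_K\cdot|G(T')|/S_{T'}\ll F_K$), forcing you to discard an $\Omega(1)$ fraction of the $K$-bucket. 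Summed over $K$ this can be $\Theta(n^t)$, not $o(n^t)$. The ``tightening'' step does not help: $f^*$ is already optimal, so no weight can be lowered, and the only consequence of optimality is that every supported clique contains a tight $\tilde T$---which does not make the slacks $S_T-|G(T)|$ comparable across different $T$.

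The paper's remedy is to drop the partition of $\tcliques{t}{G}$ altogether. It fixes $C=2\binom{b}{t}d^{-\binom{t}{2}}$, proves (using the tight-$t$-clique observation) that $\hat f(K)\le C\,|G(T)|$ for every $T\subseteq K$, and then for each $T$ covers $[0,1]$ by intervals $I^i_{T,K}$ of length $\hat f(K)/(C\,|G(T)|)$, one for each $i\in[C]$ and $K\supseteq T$; a uniform $x_{T'}\in[0,1]$ assigns $T'$ to every $(K,i)$ whose interval contains it. The point is that with this choice the expected sample from $G(T)$ into $(K,i)$ equals $\hat f(K)/C$, \emph{independent of $T$}, so Lemma~\ref{lem:random_nibble} applies directly with no sub-sampling. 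For covers the intervals merely cover $[0,1]$ (so a single $T'$ may land in several $(K,i)$), which a cover permits; for decompositions the intervals partition $[0,1]$. The $C$-fold repetition is also what guarantees $p_T\le 1$ in the cover case, where $F_K$ can exceed $|G(T)|$. Finally, the threshold below which $F_K$ is declared negligible must be chosen depending on $\ell$ (since $|\cliques{R}|$ can be as large as $\binom{\ell}{\le b}$); your ``$O(\ell^b)$ so $o(n^t)$'' is not enough, and this is exactly why Lemma~\ref{lem:random_nibble} is stated so that $\alpha_0$ may depend on all regularity parameters while $\ep_0$ does not.
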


\begin{proof}
We prove the lemma for covers and decompositions at the same time, noting the relevant differences when they arise.

Let $V_1, \dots, V_{\ell}$ be a vertex partition of $G$ witnessing that $G$ is an $(n,\ell,\ep,d)$-graph.  Let $R$ be the reduced graph of $G$, that is, the graph on $[\ell]$ where $i$ is adjacent to $j$ if and only if $(V_i,V_j)$ is a regular pair of  density at least $d$, and denote by $d_{i,j}$  the density of $(V_i,V_j)$. Let $\rho \in (0,1)$ be given.

Let $f : \cliques{G} \to \mathbb{R}_{\geq 0}$ be an optimal fractional $t$-clique cover/decomposition of $G$. Note that as $\mathbf{c}$ is a positive sequence, $f$ is supported on cliques with at least $t$ vertices. Let $\hat{f} : \cliques{R} \to \mathbb{R}_{\geq 0}$ such that
\begin{equation}\label{eq:fhat_definition}
    \hat{f}(K) = \sum_{K' \in G(K)} f(K').
\end{equation}
We will use $\hat{f}$ as a guide for producing a nearly optimal integer $t$-clique cover/decomposition of $G$ via Lemma~\ref{lem:random_nibble}. Essentially, if $\hat{f}(K)$ is not too small, then we will apply Lemma~\ref{lem:random_nibble} to the graph induced on $G$ by the parts corresponding to $K$ with probabilities $p_T = \frac{\hat{f}(K)}{C|G(T)|}$ for some suitably large $C$ not depending on $T$ or $K$. For each $T \in \tcliques{t}{G}$, we couple our choices of random subsets of $G(T)$ for different $K$ in such a way that they cover/partition each $G(T)$. Care was taken in the set-up of Lemma~\ref{lem:random_nibble} so that it applies as long as $\hat{f}(K)/C$ is bounded away from zero independently of $n$, but $\hat{f}(K)$ can depend on any of the other parameters, in particular, the regularity parameters $\ell$ and $\ep$. When $\hat{f}(K)$ is too small, we will forego applying Lemma~\ref{lem:random_nibble} and instead just put each $t$-clique of $G$ randomly assigned to $K$ into the integer cover/decomposition.

We first show that the $\frac{\hat{f}(K)}{C|G(T)|}$ will be suitable choices for the probabilities $p_T$ in our applications of Lemma~\ref{lem:random_nibble}. We claim that
\begin{equation}\label{eq:fhat_upper}
    \hat{f}(K) \leq C \cdot |G(T)|
\end{equation}
for every $K \in \cliques{R}$ and every $T \in \tcliques{t}{R}$ with $T \subseteq K$, where
\begin{equation}\label{eq:Cdef}
    C = C(d,b,t) := 2 \binom{b}{t} d^{-\binom{t}{2}} .
\end{equation}
First, observe that by Lemma~\ref{lem:regularity_counting} for sufficiently small $\ep$ with respect to $d$ and $b$, we have
\begin{equation}\label{eq:G(T)_bounds}
    \frac{1}{2} d^{\binom{t}{2}} (n/\ell)^t  \le |G(T)| \le (n/\ell)^t .
\end{equation}
We say that $T' \in \tcliques{t}{G}$ is \emph{tight} with respect to $f$ if
\[ \sum_{\substack{K' \in \cliques{G} \\ T' \subseteq K'}} f(K') = 1 .\]
Note that if $f$ is a decomposition, then all $T' \in \tcliques{t}{G}$ are tight with respect to $f$, by definition.
If $f$ is an optimal fractional $t$-clique cover, then every $K'$ in the support of $f$ must contain a tight $t$-clique; otherwise, we could reduce $f(K')$ slightly, keeping $f$ the same everywhere else, and obtain another fractional $t$-clique cover of $G$ with slightly lower total weight. Thus, for both the cover and decomposition cases, for every  $K \in \cliques{R}$,
\[ \sum_{K' \in G(K)} f(K') \leq \sum_{\substack{T^* \in \K_t(R) \\ T^* \subseteq K}} \sum_{\substack{T' \in G(T^*) \\ T' \text{tight}}} \sum_{\substack{K' \in \K(G) \\ T' \subseteq K'}} f(K') \leq \sum_{\substack{T^* \in \K_t(R) \\ T^* \subseteq K}} |G(T^*)| \le \binom{b}{t} (n/\ell)^t .\]
By~\eqref{eq:G(T)_bounds}, we have that
\[ \hat{f}(K) = \sum_{K' \in G(K)} f(K') \le \binom{b}{t} \frac{n^t}{\ell^t} \le 2\binom{b}{t}d^{-\binom{t}{2}}|G(T)| =  C\cdot |G(T)| ,\]
that is, \eqref{eq:fhat_upper} holds.

Additionally, note that, for every $T \in \tcliques{t}{R}$,
\begin{equation}\label{eq:fhatsum}
\sum_{\substack{K \in \cliques{R}\\ T \subseteq K}} \hat{f}(K) = \sum_{\substack{K \in \cliques{R}\\ T \subseteq K}} \sum_{K' \in G(K)} f(K') = \sum_{T' \in G(T)} \sum_{K' \supseteq T'} f(K') \ge \sum_{T' \in G(T)} 1  = |G(T)|,
\end{equation}
where the first equality follows from the definition of $\hat{f}$, the second equality from the fact that $G$ is an $(n,\ell,\ep,d)$-graph, and the inequality from $f$ being a $t$-clique cover. When $f$ is additionally a decomposition, this inequality is in fact an equality (furthermore, in this case, \eqref{eq:fhatsum} implies \eqref{eq:fhat_upper} with $C=1$, although we will not use this fact).

Now we use $\hat{f}$ to choose random subsets of $G(T)$ for $T \in \tcliques{t}{R}$ on which to apply Lemma~\ref{lem:random_nibble}.
For each $T \in \tcliques{t}{R}$, we arbitrarily cover/partition the interval $[0,1]$ with intervals $I^i_{T,K}$, where $i \in [C]$ and $K \in \cliques{R}$ with $T \subseteq K$ such that $|I_{T,K}^i|=\frac{\hat{f}(K)}{C|G(T)|}$. Such a covering/partition exists because, by~\eqref{eq:fhat_upper} we have $\frac{\hat{f}(K)}{C|G(T)|} \leq 1$, and by~\eqref{eq:fhatsum}, we have
\[ \sum_{K \supseteq T} \sum_{i \in [C]}
\frac{\hat{f}(K)}{C|G(T)|} = \ \sum_{K \supseteq T} \frac{\hat{f}(K)}{|G(T)|} \geq 1 ,\]
where the inequality is in fact equality in the case of decompositions.
For each $T' \in \tcliques{t}{G}$, choose $x_{T'} \in [0,1]$ independently and uniformly at random. For $T \in \tcliques{t}{R}$, $K \in \cliques{R}$ with $T \subseteq K$, and $i \in [C]$, let
\[ S_{T,K}^i = \left\{ T' \in G(T) : x_{T'} \in I_{T,K}^i \right\} .\]
Observe that since the $I_{T,K}^i$ cover/partition $[0,1]$ for every $T \in \tcliques{t}{R}$, we have that $\{S_{T,K}^i\}_{i,K}$ covers/partitions $G(T)$ for every $T \in \tcliques{t}{R}$. Furthermore, for each fixed $K \in \cliques{R}$ and $i \in [C]$, $S_{T,K}^i$ is a random subset of $G(T)$ where we include each element independently with probability $|I^i_{T,K}|  = \frac{\hat{f}(K)}{C|G(T)|}$, independently of the other $S_{T',K}^i$ with $T' \subseteq K$.

Let
\begin{equation}\label{eq:alpha_definition}
\alpha_0 = \alpha_0(\rho,b,\ell,(c_i)_{i = t}^b,d) := \min\left\{ \frac{\rho}{3} \left( \binom{\ell}{\leq b} C c_t 2 \binom{b}{t} \right)^{-1} , d^{\binom{t}{2}} \right\} , 
\end{equation}
and $\delta$ be chosen smaller  than $\min \{\frac{\rho c_{|K|}}{3c_t^2}, \frac{\rho}{3c_t}\}$. Since $\alpha_0$ depends only on $(c_i)_{i = t}^b$, $d$, $\ell$, $b$ and $t$, Lemma~\ref{lem:random_nibble} holds for $n$ sufficiently large: for each $K \in \cliques{R}$ and $i \in [C]$, if $\frac{\hat{f}(K)}{C} \geq \alpha_0 n^t$, then with probability at least $1-1/n$ there exists a partition $\mathcal{S}^1_{K,i}, \mathcal{S}^2_{K,i}$ of $\bigcup_{T \subseteq K} S_{T,K}^i$, such that
\begin{itemize}
\item $\mathcal{S}^1_{K,i}$ can be partitioned into sets of the form $\{T' : T' \subseteq K'\}$ for $K' \in G(K)$,
\item $|\mathcal{S}^1_{K,i}| \le (1 + \frac{\rho}{3c_t})\binom{|K|}{t} \frac{ \hat{f}(K)}{C}$, and
\item $|\mathcal{S}^2_{K,i}| \le  \frac{\rho c_{|K|}}{3c_{t}^2} \frac{\hat{f}(K)}{C}$.
\end{itemize}
Since there are at most $\binom{\ell}{\leq b} C$ choices of $K \in \cliques{R}$ and $i \in [C]$ and $n$ is sufficiently large, we may take a union bound over all such choices so that such a partition $\mathcal{S}^1_{K,i}, \mathcal{S}^2_{K,i}$ exists for each $K \in \cliques{R}$ and $i \in [C]$ with high probability. When $\frac{\hat{f}(K)}{C} < \alpha_0 n^t$, we simply set $\mathcal{S}^1_{K,i} = \emptyset$ and $\mathcal{S}^2_{K,i} = \bigcup_{T \subseteq K} S_{T,K}^i$. For $K \in \cliques{R}$, the expected size of $\mathcal{S}^2_{K,i}$ is $\binom{|K|}{t} \frac{\hat{f}(K)}{C}$,  so if $\frac{\hat{f}(K)}{C} < \alpha_0 n^t$, then by the Chernoff bound (Proposition~\ref{prop:chernoff}),
\[ |\mathcal{S}^2_{K,i}| \leq 2 \binom{|K|}{t} \alpha_0 n^t \]
with probability at least $1-1/n$. This means that we are able to take a union bound as before. 

By construction, every $T' \in \tcliques{t}{G}$ appears in at least one $\mathcal{S}_{K,i}^j$, where $K \in \cliques{R}$, $i \in [C]$, and $j \in \{1,2\}$, and in exactly one in the case of decompositions. Thus the union over the (natural) clique decompositions of all the $\mathcal{S}^1_{K,i}$ together with the $t$-cliques of all the $\mathcal{S}^2_{K,i}$ forms an integer $t$-clique cover/decomposition of $G$. The $\mathbf{c}$-cost of this cover/decomposition is at most
\begingroup
\allowdisplaybreaks
\begin{align*}
&\sum_{\substack{K \in \cliques{R} \\ \hat{f}(K)/C \geq \alpha_0 n^t}} \sum_{i \in [C]} \left( c_{|K|} \frac{|\mathcal{S}_{K,i}^1|}{\binom{|K|}{t}} + c_t |\mathcal{S}_{K,i}^2| \right) + \sum_{\substack{K \in \cliques{R} \\ \hat{f}(K)/C < \alpha_0 n^t}} \sum_{i \in [C]} c_t |\mathcal{S}_{K,i}^2| \\
&\leq \sum_{\substack{K \in \cliques{R} \\ \hat{f}(K)/C \geq \alpha_0 n^t}} \sum_{i \in [C]} \left( c_{|K|} \left(1+ \frac{\rho}{3c_t} \right) + c_t \frac{\rho c_{|K|}}{3c_t^2} \right) \frac{\hat{f}(K)}{C} + \sum_{\substack{K \in \cliques{R} \\ \hat{f}(K)/C < \alpha_0 n^t}} \sum_{i \in [C]} c_t \cdot 2 \binom{|K|}{t} \alpha_0 n^t \\
&= \sum_{\substack{K \in \cliques{R} \\ \hat{f}(K)/C \geq \alpha_0 n^t}} c_{|K|} \left(1+\frac{2\rho}{3c_t}\right) \hat{f}(K) + \sum_{\substack{K \in \cliques{R} \\ \hat{f}(K)/C < \alpha_0 n^t}} C c_t \cdot 2 \binom{|K|}{t} \alpha_0 n^t \\
&\leq \ (1+2\rho/3c_t) \| \hat{f} \|_\mathbf{c} + \binom{\ell}{\leq b} C c_t 2 \binom{b}{t} \alpha_0 n^t \ \leq \ \norm{f}_\mathbf{c} + \rho n^t,
\end{align*}
\endgroup
where the final inequality follows from $\| \hat{f} \|_\mathbf{c} = \norm{f}_\mathbf{c} \leq c_t n^t$ and the definition of $\alpha_0$.
\end{proof}

\paragraph{Remark.} There are two main technical challenges in the proof of Lemma~\ref{lem:frac_to_int} that may be difficult to see through the details. First, we need the cliques in the cover/decomposition to have size bounded independently of $\ep$ and (and hence $\ell$) so that Lemma~\ref{lem:random_nibble} may be applied. Second, when we are not able to apply Lemma~\ref{lem:random_nibble}, that is, when $\hat{f}(K)/C < \alpha_0 n^t$, we need that the total cost of the $t$-cliques covered is small. Since there may be many such $K$ (polynomial in $\ell$, which depends on $\ep$), we need to be able to take $\alpha_0$ close to $0$ depending on $\ep$ and $\ell$. This requires a delicate order of quantifiers in Lemma~\ref{lem:random_nibble}.

\subsection{Proof of Theorem~\ref{thm:asymptotic_bound}}\label{sec:asymptotic:final_proof}

We first establish some preliminary lemmas which allow us to reduced to `cleaned' graphs with bounded clique number.

\begin{lemma}\label{lem:remove_edges_cover}
Let $G$ be an $n$-vertex graph, $t \geq 2$, and $\mathbf{c}$ be a nonnegative real sequence. If $G'$ is a subgraph of $G$, then
\[ \coveropt{t}{G,\mathbf{c}} \leq \coveropt{t}{G',\mathbf{c}} + c_t |E(G) \setminus E(G')| n^{t-2} \]
and
\[ \decompositionopt{t}{G,\mathbf{c}} \leq \decompositionopt{t}{G',\mathbf{c}} + c_t |E(G) \setminus E(G')| n^{t-2}. \]
\end{lemma}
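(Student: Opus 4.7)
The plan is to extend an optimal $t$-clique cover/decomposition of $G'$ to one of $G$ by accounting only for the `new' $t$-cliques, namely those in $\tcliques{t}{G} \setminus \tcliques{t}{G'}$. Since $G'$ is a subgraph of $G$, every clique of $G'$ is a clique of $G$, so any cover/decomposition of $G'$ may be regarded as a collection of cliques in $G$; moreover, every $t$-clique of $G$ that also lies in $G'$ is automatically covered (and, in the decomposition case, covered exactly once) by this inherited family.

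Next I would bound the number of new $t$-cliques. Each $T \in \tcliques{t}{G} \setminus \tcliques{t}{G'}$ must contain at least one edge of $E(G) \setminus E(G')$, and any fixed edge $e$ is contained in at most $\binom{n-2}{t-2} \le n^{t-2}$ copies of $K_t$ in $G$. Summing over $e \in E(G) \setminus E(G')$ yields
\[ |\tcliques{t}{G} \setminus \tcliques{t}{G'}| \le |E(G) \setminus E(G')|\, n^{t-2}. \]

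To finish, for each new $t$-clique $T$ I would add $T$ itself (as a clique of size $t$) to the cover/decomposition. Each such addition contributes $c_t$ to the total cost, giving the claimed additive term $c_t |E(G) \setminus E(G')| n^{t-2}$. For the decomposition statement, one must verify that augmenting with these singleton $t$-cliques preserves the decomposition property: this holds because each added $T$ contains an edge of $E(G) \setminus E(G')$ and hence cannot be a subclique of any $K \in \cliques{G'}$, so no $t$-clique of $G'$ is covered a second time, and conversely each new $t$-clique is covered exactly once by its own copy. There is no real obstacle here; the statement is essentially a counting observation combined with the trivial `add the missing cliques directly' construction.
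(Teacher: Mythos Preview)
Your proposal is correct and follows essentially the same approach as the paper: take an optimal cover/decomposition of $G'$, regard it as a family of cliques in $G$, and then add each $t$-clique of $\tcliques{t}{G}\setminus\tcliques{t}{G'}$ individually, bounding their number by $|E(G)\setminus E(G')|\,n^{t-2}$ via the observation that every such $t$-clique contains a deleted edge. The paper's proof is the same construction written out as an explicit $\{0,1\}$-valued function $f$ on $\cliques{G}$.
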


\begin{proof}
Let $f' : \cliques{G'} \to \mathbb{R}_{\geq 0}$ be an integer $t$-clique cover/decomposition of $G'$. Define $f : \cliques{G} \to \mathbb{R}_{\geq 0}$ by
\[ f(K) = \begin{cases}
            f'(K) &\text{ if } K \in \cliques{G'} \subseteq \cliques{G} ,\\
            1 &\text{ if } K \in \tcliques{t}{G} \setminus \tcliques{t}{G'},\\
            0 &\text{ otherwise}.
        \end{cases} \]
It is easy to check that $f$ is an integer $t$-clique cover/decomposition of $G$. Since 
\[ \norm{f}_{\mathbf{c}} \leq \norm{f'}_{\mathbf{c}} + c_t |\tcliques{t}{G} \setminus \tcliques{t}{G'}| \leq \norm{f'}_{\mathbf{c}} + c_t|E(G) \setminus E(G')| n^{t-2} ,\]
we reach the desired conclusion.
\end{proof}

\begin{lemma}\label{lem:no_big_cliques}
Let $G$ be an $n$-vertex graph, and $b \geq 2$ such that $n \ge 4b$. Then there exists an $n$-vertex subgraph $G'$ of $G$ such that $\omega(G') \leq b$ and $|E(G) \setminus E(G')| \leq n^2/b$.
\end{lemma}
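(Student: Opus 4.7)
The plan is to use the probabilistic method applied to a uniformly random $b$-coloring of $V(G)$. Specifically, I would let $c \colon V(G) \to [b]$ assign each vertex independently and uniformly at random to one of $b$ classes, and let $G'$ be the spanning subgraph of $G$ obtained by deleting every edge whose endpoints receive the same color. By construction the color classes are independent sets in $G'$, so $G'$ is $b$-partite; any $b+1$ vertices must contain two in the same class, hence nonadjacent in $G'$, so $\omega(G') \leq b$.

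For the edge count, fix $uv \in E(G)$: the probability that $c(u) = c(v)$ equals $1/b$, so by linearity
\[
\E\bigl[|E(G) \setminus E(G')|\bigr] \;=\; \frac{|E(G)|}{b} \;\leq\; \frac{\binom{n}{2}}{b} \;\leq\; \frac{n^2}{2b}.
\]
Therefore some realization of $c$ yields $|E(G) \setminus E(G')| \leq n^2/(2b) \leq n^2/b$, as desired. (The hypothesis $n \geq 4b$ is not actually needed for this probabilistic argument; it is likely an artifact of a deterministic balanced-partition proof, where one partitions $V(G)$ into $b$ classes of sizes $\lfloor n/b\rfloor$ or $\lceil n/b\rceil$ and bounds $\sum_i \binom{|V_i|}{2}$ by $n^2/(2b) + O(b)$, the hypothesis then absorbing the additive error into the $n^2/b$ bound.)

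There is no real obstacle here: the statement reduces to the well-known fact that every graph has a ``large'' $b$-partite subgraph, and the only subtlety is verifying the $K_{b+1}$-free consequence, which is immediate from $b$-partiteness.
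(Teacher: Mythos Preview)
Your proof is correct and follows essentially the same idea as the paper: both intersect $G$ with a complete $b$-partite graph on $V(G)$ to force $\omega(G') \le b$, then bound the number of deleted edges. The paper does this deterministically by taking $H \cong T_{n,b}$ and setting $G' = G \cap H$, bounding $|E(G)\setminus E(G')| \le |K_n \setminus H| \le b\lceil n/b\rceil^2/2 \le n^2/b$ (which is where the hypothesis $n \ge 4b$ is used to absorb the rounding error, exactly as you guessed); you do it probabilistically and get the slightly stronger bound $n^2/(2b)$ without needing $n \ge 4b$.
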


\begin{proof}
Let $H$ be isomorphic to $\turangraph{n}{b}$ on $V(G)$, and let $G' = G \cap H$. Then $\omega(G') \leq \omega(H) = b$ and
\[ |E(G) \setminus E(G')| \leq |K_n \setminus H| \leq b \frac{\ceil{n/b}^2}{2} \leq \frac{n^2}{b} . \qedhere\]
\end{proof}

\begin{proof}[Proof of Theorem~\ref{thm:asymptotic_bound}]
We prove the theorem for covers, although the proof goes through identically for decompositions in place of covers.

Without loss of generality, we may suppose  $\rho < 1$. Let $b$ be a positive integer such that $100 c_t/\rho \le b$. Choose real number $d>0$ and positive integer $m_0$ such that $1/m_0 + d \le \frac{\rho}{100c_t}$. Let $\ep_0(b,d,(c_i)_{i = t}^b,\rho/100)$ and $n_0(b,d,(c_i)_{i = t}^b,\rho/100)$ be as in Lemma~\ref{lem:frac_to_int}. Let $G$ be an $n$-vertex graph such that $n \ge \max\{4b,\allowbreak n_0(b,d,(c_i)_{i = t}^b,\rho/100)(1 - \ep)^{-1}\}$.  By Lemma~\ref{lem:no_big_cliques}, there exists an $n$-vertex subgraph $G'$ of $G$ such that $\omega(G') \le b$ and $|E(G) \setminus E(G')| \le n^2/b$. Let  $\ep < \min\{\ep_0(b,d,(c_i)_{i = t}^b,\frac{\rho}{100}), \frac{\rho}{100c_t}\}$. By Lemma~\ref{lem:cleaning}, $G'$ has a subgraph $\tilde{G}$ such that $\tilde{G}$ is an $(n,\ell,\ep,d)$-graph, $\tilde{n} = |V(\tilde{G})| \ge (1 - \ep)n$, and $|E(G')\setminus E(\tilde{G})| \le (2\ep + 1/m_0 + d)n^2$  where $m_0 \le \ell \le M(\ep)$. As $\ep < \ep_0(b,d,c,\rho/100)$ and $\tilde{n} = (1-\ep)n \ge n_0(b,d,(c_i)_{i=t}^b,\rho/100)$, by Lemma~\ref{lem:frac_to_int} and Theorem~\ref{thm:frac},
\begin{align*}
    \coveropt{t}{\tilde{G},\mathbf{c}} \le \coveroptfrac{t}{\tilde{G},\mathbf{c}} + \rho \tilde{n}^t/100 \le \coveroptfrac{t}{\turangraph{\tilde{n}}{t},\mathbf{c}} + \rho \tilde{n}^t/100 \le \coveropt{t}{\turangraph{n}{t},\mathbf{c}} + \rho n^t/100.
\end{align*}
Let $S = V(G') \setminus V(\tilde{G})$. As $|V(G') \setminus V(\tilde{G})| \le \ep n$ and by  Lemma~\ref{lem:cleaning}, $|E(G' - S) \setminus E(\tilde{G})| \le (2 \ep + 1/m_0 + d)n^2$. Then, Lemma~\ref{lem:remove_edges_cover} implies that
\begin{align*}
    \coveropt{t}{G',\mathbf{c}} \le \coveropt{t}{\tilde{G},\mathbf{c}} + c_t(3 \ep + 1/m_0 + d)n^t \le \coveropt{t}{\tilde{G},\mathbf{c}} + 4 \rho n^t/100.
\end{align*}
Finally, as $|E(G) \setminus E(G')| \le n^2/b$ by Lemma \ref{lem:remove_edges_cover}, 
\begin{align*}
    \coveropt{t}{G,\mathbf{c}} \le \coveropt{t}{G',\mathbf{c}} + c_tn^t/b \le \coveropt{t}{G',\mathbf{c}} + \rho n^t/100.
\end{align*}
Thus we conclude
\[ \coveropt{t}{G,\mathbf{c}} \le \coveropt{t}{\turangraph{n}{t},\mathbf{c}} + \rho n^t. \qedhere \]

\end{proof}

\section{Integrality gaps and packings}\label{sec:gap}

Let $G$ be a graph. In our proof of Theorem~\ref{thm:asymptotic_bound}, we prove that for a suitable cleaned graph $\tilde{G}$, $\tcoveroptfrac{\tilde{G},\mathbf{c}}$ is close to $\tcoveropt{G,\mathbf{c}}$ in an appropriate sense. The ratio/difference between $\coveroptfrac{t}{G,\mathbf{c}}$ and $\coveropt{t}{G,\mathbf{c}}$ ($\decompositionoptfrac{t}{G,\mathbf{c}}$ and $\decompositionopt{t}{G,\mathbf{c}}$) is known as the multiplicative/additive \emph{integrality gap} for the $t$-clique cover (decomposition) number. 

\subsection{Upper bounds}

Theorem~\ref{thm:asymptotic_bound_decomposition} states that the additive integrality gap for decompositions is $o(n^t)$. To prove this, we first need an analog of Lemma~\ref{lem:remove_edges_cover} just for decompositions, but in the reverse direction.

\begin{lemma}\label{lem:remove_edges_decomposition}
    Let $G$ be an $n$-vertex graph, let $t \ge 2$ and $\mathbf{c}$ be a nonnegative real sequence such that $c_{i+1} - c_i \leq c_t$ for all $i \geq t$. If $G'$ is a subgraph of $G$, then for all integers $b \ge t+1$,
    \[ \decompositionoptfrac{t}{G',\mathbf{c}} \le \decompositionoptfrac{t}{G,\mathbf{c}} + \frac{2c_tn^t}{(b-t)^{t-1}} + c_t\binom{b}{t}|E(G) \setminus E(G')|n^{t-2}. \]
\end{lemma}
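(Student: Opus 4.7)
The plan is to construct a fractional $t$-clique decomposition $f'$ of $G'$ in two stages, starting from an optimal fractional decomposition $f^*$ of $G$; each stage contributes one of the two error terms.

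\emph{Stage 1: truncate to bounded clique size.} For every $K \in \cliques{G}$ with $|K| > b$ in the support of $f^*$, I would remove the weight $f^*(K)$ on $K$ and redistribute it uniformly over the $b$-subsets of $V(K)$, assigning each $B \in \binom{V(K)}{b}$ (which is a $b$-clique of $G$ since $K$ is) a weight of $f^*(K)/\binom{|K|-t}{b-t}$. Because every $t$-subset $T \subseteq V(K)$ lies in exactly $\binom{|K|-t}{b-t}$ of these $B$'s, the decomposition equality at $T$ is preserved, so the new assignment $f$ remains a fractional $t$-clique decomposition of $G$, now supported on cliques of size at most $b$. Using the identity $\binom{|K|}{b}/\binom{|K|-t}{b-t} = \binom{|K|}{t}/\binom{b}{t}$, the cost increase per replaced $K$ equals $\big(c_b\binom{|K|}{t}/\binom{b}{t} - c_{|K|}\big)f^*(K)$. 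Combining the global decomposition identity $\sum_K \binom{|K|}{t}f^*(K) = |\tcliques{t}{G}| \le n^t/t!$ with $c_b \le c_t(b-t+1)$ (iterated from the hypothesis $c_{i+1}-c_i \le c_t$) then bounds the total Stage 1 cost increase by $\frac{c_t n^t}{(b-t)^{t-1}} \le \frac{2c_tn^t}{(b-t)^{t-1}}$.

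\emph{Stage 2: restrict to $G'$.} Starting from the size-bounded $f$, I would set $f'(K) := f(K)$ for $K \in \cliques{G'} \subseteq \cliques{G}$. For each $T' \in \tcliques{t}{G'}$ the partial sum $\sum_{T' \subseteq K' \in \cliques{G'}} f'(K')$ then undershoots $1$ by the deficit $D(T') := \sum_{T' \subseteq K \in \cliques{G}\setminus \cliques{G'}} f(K)$, so I would add $D(T')$ to $f'(T')$ (valid, since $T' \in \cliques{G'}$) to restore equality. The added cost is $c_t\sum_{T' \in \tcliques{t}{G'}}D(T') \le c_t\binom{b}{t}\sum_{K \in \cliques{G}\setminus \cliques{G'}} f(K)$, using $|K|\le b$. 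Each removed $K$ contains some $e \in E(G)\setminus E(G')$, and for any fixed such $e$ the decomposition identity $\sum_{K \ni e,\,|K|\ge t} \binom{|K|-2}{t-2}f(K) = |\{T \in \tcliques{t}{G} : T \ni e\}| \le \binom{n-2}{t-2}$, together with $\binom{|K|-2}{t-2} \ge 1$, yields $\sum_{K \ni e,\,|K|\ge t} f(K) \le n^{t-2}$. Summing over bad edges gives Stage 2 cost at most $c_t\binom{b}{t}|E(G)\setminus E(G')| n^{t-2}$, matching the third term.

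\emph{The main obstacle} is Stage 1. A naive replacement of every large $K$ by its $t$-subsets would incur cost $\sum_{|K|>b} c_t\binom{|K|}{t}f^*(K)$, which the decomposition identity only bounds by $c_t n^t/t!$, \emph{independent of $b$} and thus useless for the desired decay. The crucial choice is to redistribute onto $b$-cliques rather than $t$-cliques: under the linear-growth hypothesis on $\mathbf{c}$, a $b$-clique covers $\binom{b}{t}$ many $t$-subsets at cost $c_b \le c_t(b-t+1)$, giving a savings factor of order $\binom{b}{t}/(b-t+1) \approx b^{t-1}/(t-1)!$, which is exactly what produces the $1/(b-t)^{t-1}$ decay in the error term.
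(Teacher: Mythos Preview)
Your proposal is correct and follows essentially the same two-stage argument as the paper: first redistribute the weight of each clique of size $>b$ uniformly onto its $b$-subcliques (using the identity $\binom{|K|}{b}/\binom{|K|-t}{b-t}=\binom{|K|}{t}/\binom{b}{t}$ together with $c_b\le (b-t+1)c_t$ and $\sum_K\binom{|K|}{t}f^*(K)=|\tcliques{t}{G}|$), then replace each remaining clique containing a deleted edge by its $t$-subcliques, bounding the resulting cost via the decomposition identity at each removed edge. Your bookkeeping in Stage~1 is in fact slightly sharper than the paper's (you obtain $c_t n^t/(b-t)^{t-1}$ directly, whereas the paper first gets $c_t(b-t+1)n^t/(b-t)^t$ and then uses $(b-t+1)/(b-t)\le 2$), and your Stage~2 justification of $\sum_{K\ni e}f(K)\le n^{t-2}$ makes explicit what the paper leaves implicit.
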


\begin{proof}
Let $b \ge t+1$. Let $g : \cliques{G} \to \mathbb{R}_{\ge 0}$ be an optimal fractional $t$-clique decomposition of $G$.  Let $\tcliques{>b}{G} = \{K \in \cliques{G} : |K| > b\}$, and recall $\tcliques{b}{G}$ is the set of all cliques of $G$ of size $b$.  We modify $g$ in the following way. Consider the map $\hat{g} : \cliques{G} \to \mathbb{R}_{\ge 0}$, such that
    \begin{align*}
       \hat{g}(Q) = \begin{cases}
             0 &\text{ if } Q \in \tcliques{>b}{G},\\
            g(Q) &\text{ if } Q \in \tcliques{<b}{G},\\
            g(Q) + \sum\limits_{\substack{K \in \tcliques{>b}{G}\\ Q \subseteq K}} g(K)\binom{|K| - t}{b-t}^{-1}  &\text{ if } Q \in \tcliques{b}{G}.
    \end{cases} 
    \end{align*}
    Essentially, we take each clique on more than $b$ vertices and replace it by all of its $b$-cliques, appropriately weighted. Note by construction, $\hat{g}$ has support on cliques of size at most $b$, and $\hat{g}$ is a $t$-clique decomposition of $G$, since for all  $T \in \tcliques{t}{G}$, we have
    \begin{align*}
        \sum_{\substack{Q \in \cliques{G} \\ T \subseteq Q}} \hat{g}(Q) &= \sum_{\substack{Q \in \tcliques{b}{G}\\ T \subseteq Q}}\sum_{\substack{K \in \tcliques{>b}{G}\\ T \subseteq Q \subseteq K}} g(K)\binom{|K| - t}{b-t}^{-1} - \sum_{\substack{K \in \tcliques{>b}{G}\\ T \subseteq K}} g(K) + \sum_{\substack{Q \in \cliques{G} \\ T \subseteq Q}} g(Q)\\
        &= \sum_{\substack{K \in \tcliques{>b}{G}\\ T \subseteq K}} g(K) -   \sum_{\substack{K \in \tcliques{>b}{G}\\ T \subseteq K}} g(K) + \sum_{\substack{Q \in \cliques{G} \\ T \subseteq Q}} g(Q) = \sum_{\substack{Q \in \cliques{G} \\ T \subseteq Q}} g(Q) = 1.
    \end{align*}

    Furthermore,
    \begingroup
\allowdisplaybreaks
    \begin{align}
    \norm{\hat{g}}_{\mathbf{c}} &= \norm{g}_{\mathbf{c}} + \sum_{Q \in \tcliques{b}{G}}\sum_{\substack{K \in \tcliques{>b}{G}\\Q \subseteq K}} c_b g(K)\binom{|K| - t}{b-t}^{-1} - \sum_{K \in \tcliques{>b}{G}} c_{|K|}g(K) \notag\\
    &\le \norm{g}_{\mathbf{c}} + c_b \sum_{Q \in \tcliques{b}{G}}\sum_{\substack{K \in \tcliques{>b}{G}\\  Q \subseteq K}}  g(K)\binom{|K| - t}{b-t}^{-1} \notag\\
    &= \norm{g}_{\mathbf{c}} + c_b \sum_{K \in \tcliques{>b}{G}}  g(K)\binom{|K|}{b}\binom{|K| - t}{b-t}^{-1} \notag\\
    &\le \norm{g}_{\mathbf{c}} + (b-t+1)c_t \sum_{K \in \tcliques{>b}{G}}  g(K)\binom{|K|}{b}\binom{|K| - t}{b-t}^{-1} \label{eq:decomposition_edge_deletion_equation}\\
    &= \norm{g}_\mathbf{c} + (b-t+1)c_t \sum_{K \in \tcliques{>b}{G}}  g(K)\binom{|K|}{t}\binom{b}{t}^{-1} \notag\\
    &\leq \norm{g}_\mathbf{c} + (b-t+1)c_t |\tcliques{t}{G}| \binom{b}{t}^{-1} \label{eq:decomposition_edge_deletion_equation2}\\
    &\le \norm{g}_{\mathbf{c}} + c_t(b-t+1)\frac{n^t}{(b-t)^t}
    \le \norm{g}_{\mathbf{c}} + 2c_t\frac{n^t}{(b-t)^{t-1}}, \notag
    \end{align}
    \endgroup
    where \eqref{eq:decomposition_edge_deletion_equation} follows from $c_{i+1} - c_i \leq c_t$ for all $i \geq t$, and \eqref{eq:decomposition_edge_deletion_equation2} follows from $g$ being a $t$-clique decomposition, meaning that each $t$-clique of $G$ is covered by cliques of weight totaling at most one.

    Let $\mathcal{Q}$ be the set of cliques of $G$, which contain an edge of $E(G) \setminus E(G')$, i.e.,
    \[ \mathcal{Q} = \{ K \in \cliques{G} : E(K) \cap (E(G) \setminus E(G')) \neq \emptyset\}.\]
    We modify $\hat{g}$ to a $t$-clique decomposition of $g'$ of $G'$ by replacing each clique of $\mathcal{Q}$ with its $t$-cliques: 
    \begin{align*}
       g'(Q) = \begin{cases}
             0 &\text{ if } Q \in \mathcal{Q},\\
            \hat{g}(Q) &\text{ if } Q \in \cliques{G} \setminus( \mathcal{Q} \cup \tcliques{t}{G}),\\
            \hat{g}(Q) + \sum\limits_{\substack{K \in \mathcal{Q}\\ Q \subseteq K}} \hat{g}(K)  &\text{ if } Q \in  \tcliques{t}{G} \setminus \mathcal{Q}.
    \end{cases} 
    \end{align*}
    Indeed, $g'$ has support on $\cliques{G'}$ and for any $T \in \tcliques{t}{G'}$, we have, 
    \[ \sum_{\substack{Q \in \cliques{G'} \\ T \subseteq Q}} g'(Q) = \sum_{\substack{Q \in \cliques{G} \setminus \mathcal{Q}\\ T \subsetneq Q}}\hat{g}(Q) + \sum_{\substack{Q \in \mathcal{Q}\\ T \subsetneq Q}} \hat{g}(Q) + \hat{g}(T)   = \sum_{\substack{Q \in \cliques{G}\\ T \subseteq Q}} \hat{g}(Q) = 1. \]
    Thus, restricting the domain of $g'$ to $\cliques{G'}$ yields a fractional $t$-clique decomposition of $G'$. Now, as $\hat{g}$ was supported on cliques of size at most $b$, we have, 
    \begin{align*}
        \norm{g'}_{\mathbf{c}} &\le \norm{\hat{g}}_{\mathbf{c}} + \sum_{Q \in \mathcal{Q}}\left( c_t\binom{|Q|}{t} \hat{g}(Q) - c_{|Q|}\hat{g}(Q) \right) \le \norm{\hat{g}}_{\mathbf{c}} + c_t \binom{b}{t} \sum_{Q \in \mathcal{Q}} \hat{g}(Q)\\
        &\le \norm{\hat{g}}_{\mathbf{c}} + c_t\binom{b}{t}|E(G) \setminus E(G')|n^{t-2} \\
        &\le \norm{g}_{\mathbf{c}} +  2c_t\frac{n^t}{(b-t)^{t-1}} + c_t\binom{b}{t}|E(G) \setminus E(G')|n^{t-2}. \qedhere
    \end{align*}
\end{proof}

\begin{proof}[Proof of Theorem~\ref{thm:asymptotic_bound_decomposition}]
We may suppose $\rho < 1$. Let $b,b' > t$ be integers such that
\[  \frac{2 c_t}{(b'-t)^{t-1}} \le \frac{\rho}{100} \text{ and } \binom{b'}{t} \frac{100 c_t}{\rho} \le b.\]
Let  $d>0$ and  $m_0$  be a positive integer such that
\[  1/m_0 + d \le \frac{\rho}{100 c_t}\binom{b'}{t}^{-1}.  \]
 Let $\ep_0(b,d,(c_i)_{i = t}^b,\rho/100)$ and $n_0(b,d,(c_i)_{i = t}^b,\rho/100)$ be as in Lemma~\ref{lem:frac_to_int}. Let $G$ be an $n$-vertex graph such that $n \ge \max\{4b, n_0(b,d,(c_i)_{i=t}^b,\rho/100)(1-\ep)^{-1}\}$.  Let  
 \[\ep < \min\left\{\ep_0\left(b,d,(c_i)_{i=t}^b,\frac{\rho}{100}\right),\frac{\rho}{100 c_t}\binom{b'}{t}^{-1},\right\}.\]
 By Lemma \ref{lem:no_big_cliques}, there exists $G' \subseteq G$ such that $|V(G')| = n$, $\omega(G') \le b$, and $|E(G) \setminus E(G')| \le n^2/b$. By Lemma~\ref{lem:remove_edges_decomposition} and our choice of $b$, 
 \[  \decompositionopt{t}{G} \le \decompositionopt{t}{G'} + c_t|E(G) \setminus E(G')|n^{t-2} \le \decompositionopt{t}{G'} +  \rho n^t/100.\]
 By Lemma~\ref{lem:cleaning}, $G'$ has a subgraph $\tilde{G}$ such that $\tilde{G}$ is an $(n,\ell,\ep,d)$-graph, $\tilde{n} = |V(\tilde{G})| \ge (1 - \ep)n$, and $|E(G')\setminus E(\tilde{G})| \le (2\ep + 1/m_0 + d)n^2$  where $m_0 \le \ell \le M(\ep)$. By Lemma~\ref{lem:remove_edges_decomposition} and our choice of $\varepsilon$, $m_0$, and $d$, 
\[ \decompositionopt{t}{G'} \le \decompositionopt{t}{\tilde{G}} + c_t|V(G') \setminus V(\tilde{G})|n^{t-1} + c_t|E(G') \setminus E(\tilde{G})|n^{t-2} \le  \decompositionopt{t}{\tilde{G}} + 4\rho n^t/100. \]

    As $\ep < \ep_0(b,d,c,\rho/100)$ and $\tilde{n} = (1-\ep)n \ge n_0(b,d,c,\rho/100)$, by Lemma~\ref{lem:frac_to_int},
    \[ \decompositionopt{t}{\tilde{G}} \le \decompositionoptfrac{t}{\tilde{G}} + \rho \tilde{n}^t/100 \le \decompositionoptfrac{t}{\tilde{G}} + \rho n^t/100.\]
    By Lemma~\ref{lem:remove_edges_decomposition} (with $b'$ serving the role of $b$),
    \begin{align*}
    \decompositionoptfrac{t}{\tilde{G}} &\le \decompositionoptfrac{t}{G'} + \frac{4c_tn^t}{(b'-t)^{t-1}} + c_t\binom{b'}{t}|E(G') \setminus E(\tilde{G})|n^{t-2} + c_t\binom{b'}{t}|V(G') \setminus V(\tilde{G})|n^{t-1}\\
    &\le \decompositionoptfrac{t}{G'} + 6\rho n^t/100 \\
    &\le \decompositionoptfrac{t}{G} + 6\rho n^t/100 + \frac{2c_tn^t}{(b'-t)^{t-1}} + c_t\binom{b'}{t}|E(G) \setminus E(G')|n^{t-2}\\
    &\le \decompositionoptfrac{t}{G} + 8\rho n^t/100.
    \end{align*}
    Combining this sequence of inequalities, we conclude that
    \[ \decompositionopt{t}{G} \le \decompositionoptfrac{t}{G} + \rho n^t.\qedhere\]    
\end{proof}

We conjecture that the additive gap for the $t$-clique cover number is small.
\begin{conjecture}\label{conj:gap}
For every $\ep>0$ and positive integer $t$, there exists $n_0$ such that for every graph $G$ on $n \geq n_0$ vertices
\[ \tcoveropt{G} \leq \tcoveroptfrac{G} + \ep n^t .\]
\end{conjecture}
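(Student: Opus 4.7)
The plan is to mirror the proof of Theorem~\ref{thm:asymptotic_bound_decomposition}, replacing decompositions by covers throughout. Given $\ep > 0$ and an $n$-vertex graph $G$, I would fix constants $b$, $d$ and a small $\ep_1$, first apply Lemma~\ref{lem:no_big_cliques} to pass to a subgraph $G' \subseteq G$ with $\omega(G') \le b$, and then Lemma~\ref{lem:cleaning} to extract an $(n,\ell,\ep_1,d)$-graph $\tilde{G} \subseteq G'$. Two applications of Lemma~\ref{lem:remove_edges_cover} give $\coveropt{t}{G} \le \coveropt{t}{\tilde{G}} + O(\ep n^t)$, and Lemma~\ref{lem:frac_to_int} yields $\coveropt{t}{\tilde{G}} \le \coveroptfrac{t}{\tilde{G}} + O(\ep n^t)$. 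Chaining these reductions, it suffices to prove the cover analog of Lemma~\ref{lem:remove_edges_decomposition}, namely that $\coveroptfrac{t}{\tilde{G}} \le \coveroptfrac{t}{G} + O(\ep n^t)$ whenever $\tilde{G} \subseteq G$ differs from $G$ in only $O(\ep n^2)$ edges.

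The natural attempt at this last inequality follows the decomposition argument verbatim. First I would start from an optimal fractional cover $g$ of $G$ and perform two substitutions: for each $K \in \mathrm{supp}(g)$ with $|K| > b$, replace $K$ by its $b$-subcliques, each weighted by $g(K)/\binom{|K|-t}{b-t}$, producing $\hat{g}$ supported on cliques of size at most $b$; then for each $Q \in \mathrm{supp}(\hat{g})$ meeting an edge of $E(G)\setminus E(\tilde{G})$, replace $Q$ by its $t$-subcliques contained in $\tcliques{t}{\tilde{G}}$, each of weight $\hat{g}(Q)$. Both operations preserve the cover inequality $\sum_{T \subseteq K} f(K) \ge 1$, and using $c_{i+1} - c_i \le c_t$ (together with $|Q| \le b$ after the first step) the total cost increase is bounded by a constant depending on $b$, $t$, $c_t$ times
\[
  \sum_{|K|>b} g(K)\binom{|K|}{t}\ +\ \sum_{\substack{Q \in \mathrm{supp}(\hat{g}) \\ Q \text{ meets a bad edge}}} \hat{g}(Q).
\]

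The difficulty, and this is the main obstacle, is that the control used in the decomposition case now fails. For a fractional decomposition the equality $\sum_{T \subseteq K} g(K) = 1$ forces $\sum_K g(K)\binom{|K|}{t} = |\tcliques{t}{G}|$ and, more delicately, $\sum_{Q \supseteq e} \hat{g}(Q) \le n^{t-2}$ for every edge $e$, which together bound the displayed expression by $O(\ep n^t)$. For covers only the one-sided inequality $\sum_K g(K)\binom{|K|}{t} = \sum_T w(T) \ge |\tcliques{t}{G}|$ holds, where $w(T) := \sum_{T \subseteq K} g(K)$; the weight $w(T)$ can be much larger than $1$ at non-tight $t$-cliques, and the per-edge sum $\sum_{Q \supseteq e} \hat{g}(Q)$ is similarly uncontrolled.

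To get around this, a natural route is to first replace $g$ by a near-optimal fractional cover in which $w(T) = 1 + o(1)$ at essentially every $t$-clique, so that $g$ behaves like a decomposition for the purposes of the substitutions above. Starting from an extreme-point optimum of the cover LP --- for which $g(K) \in [0,1]$ and the tight-$t$-clique argument hidden inside Lemma~\ref{lem:frac_to_int} yields $\sum_K g(K) \le |\tcliques{t}{G}|$ --- one would iteratively shift weight from cliques lying in over-covered $t$-cliques down into proper subcliques, paying at most $c_t$ per unit weight moved by virtue of $c_{i+1} - c_i \le c_t$. Whether this redistribution can be carried out globally without accumulating an $\Omega(n^t)$ cost overhead appears to be the crux of the problem; an alternative, more ambitious approach is to extend Lemma~\ref{lem:frac_to_int} so that the Frankl--R\"odl nibble is run directly on $G$ using $G$'s own optimal fractional cover as input, bypassing the comparison step altogether, though this would require reworking the uniform density bounds that the cleaning step currently provides.
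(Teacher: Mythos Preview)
The statement you are trying to prove is Conjecture~\ref{conj:gap}, which the paper leaves \emph{open}; there is no proof in the paper to compare against. Your write-up is therefore a proof sketch for an open problem, and you have correctly isolated where it breaks down.

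Your reduction to a cover analogue of Lemma~\ref{lem:remove_edges_decomposition} is exactly the step the authors single out as the obstruction. Immediately after stating the conjecture they write that ``it is not clear if the cover analogue of Lemma~\ref{lem:remove_edges_decomposition} is true,'' and they give a concrete witness to the failure of any naive per-edge bound: $\coveroptfrac{2}{K_{1,1,n}} = n$ while $\coveroptfrac{2}{K_{2,n}} = 2n$, so deleting a single edge can raise the fractional cover number by $n$. This example kills precisely the inequality you call ``the last inequality'': for $t=2$ you would need the increase per deleted edge to be $O(1)$, but it is $\Theta(n)$ here. So the substitution argument you outline---even after restricting to cliques of size at most $b$---cannot be salvaged by a bound of the form (constant)$\cdot|E(G)\setminus E(\tilde G)|\cdot n^{t-2}$.

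Your two proposed workarounds are reasonable directions but neither is a proof. The first (redistribute weight to force $w(T)=1+o(1)$ everywhere) amounts to turning an optimal fractional cover into a near-optimal fractional decomposition at $o(n^t)$ extra cost; if you could do that, you would have $\decompositionoptfrac{t}{G,\mathbf{c}} \le \coveroptfrac{t}{G,\mathbf{c}} + o(n^t)$, which combined with Theorem~\ref{thm:asymptotic_bound_decomposition} would indeed settle the conjecture, but no mechanism is given and the $K_{1,1,n}$ example shows the redistribution cannot be purely local. The second (run the nibble directly on $G$) is essentially asking for a version of Lemma~\ref{lem:frac_to_int} without the $(n,\ell,\ep,d)$-graph hypothesis, which is what the entire regularity apparatus is there to supply. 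In short: your diagnosis matches the paper's, and the gap you identify is genuine and, as far as the paper is concerned, unresolved.
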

Certainly Conjecture~\ref{conj:gap} could be made for non-uniform costs, although it is not clear what (mild) conditions on the cost vector are necessary. It is not clear if the cover analogue of Lemma~\ref{lem:remove_edges_decomposition} is true. We note that removing a single edge could increase the cover number by a significant amount: $\coveroptfrac{2}{K_{1,1,n}} = n$, while $\coveroptfrac{2}{K_{2,n}} = 2n$, despite $K_{1,1,n}$ and $K_{2,n}$ differing in only one edge.

\subsection{Lower bounds}

We  show that the additive gap could be as big as $n^{2-o(1)}$ for some graphs. To do this, we first construct a graph with a large multiplicative gap using a construction of Scheinerman and Trenk~\cite{ST}.
\begin{proposition}\label{prop:gap}
For every $t \geq 2$ and even $n \geq 2t$, there exists a graph $G_n$ on $n$ vertices such that $\coveropt{t}{G_n}/\coveroptfrac{t}{G_n}  = \Omega_t(\log(n))$. 
\end{proposition}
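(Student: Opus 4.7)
The plan is to take $G_n = K_n \setminus M$, the cocktail party graph on $n$ vertices, where $M$ is a perfect matching (possible since $n$ is even); this adapts the Scheinerman--Trenk construction to all $t \geq 2$. The goal is to establish $\coveroptfrac{t}{G_n} \leq 2^t$ and $\coveropt{t}{G_n} \geq \log_3(n+1)$, which together give the desired $\Omega_t(\log n)$ multiplicative gap.

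For the fractional upper bound, the idea is to weight maximum cliques uniformly. The maximum cliques of $G_n$ are the $2^{n/2}$ transversals of $M$, each of size $n/2$; assigning weight $2^{t-n/2}$ to each gives a function $f$ with $\norm{f}_{\mathbf{1}} = 2^t$. A short counting check confirms that every $t$-clique of $G_n$ is a transversal of exactly $t$ matched pairs, so sits inside exactly $2^{n/2-t}$ maximum cliques and receives total $f$-weight $1$. Hence $f$ is a valid fractional $t$-clique cover.

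For the integer lower bound, the plan is to translate any $t$-clique cover $\{Q_1, \ldots, Q_k\}$ into a labeling $\phi: V(G_n) \to 2^{[k]}$ via $\phi(v) = \{l : v \in Q_l\}$, and then encode each matched pair $p = \{u_p, v_p\}$ as a word $c_p \in \{u, v, \ast\}^k$ recording which (if any) of $u_p, v_p$ lies in each $Q_l$. The key step is to reduce the $t$-wise covering condition to a pairwise one: because $n/2 \geq t$, any two non-matched vertices extend to some $t$-clique, so the cover forces $\phi(x) \cap \phi(y) \neq \emptyset$ for any non-matched $x, y$. Running through all four choices $x \in \{u_p, v_p\}$, $y \in \{u_{p'}, v_{p'}\}$ shows that all four patterns in $\{u, v\}^2$ must appear among the $k$ positions of $(c_p, c_{p'})$; consequently no two $c_p, c_{p'}$ can be equal, no two can be related by the involution $\mathrm{swap}$ (transposing $u \leftrightarrow v$, fixing $\ast$), and no $c_p$ can be $\ast^k$. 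Counting orbits of this involution on $\{u, v, \ast\}^k$ (only $\ast^k$ is a fixed point) then gives $n/2 \leq (3^k-1)/2$, i.e., $k \geq \log_3(n+1)$.

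The main obstacle is the $t$-wise to pairwise reduction, which hinges on the clique-extension step enabled by $n \geq 2t$; once that is in place, the lower bound reduces to a clean orbit count under the natural swap involution on $3^k$ codewords, and combining the two bounds yields $\coveropt{t}{G_n}/\coveroptfrac{t}{G_n} \geq \log_3(n+1)/2^t = \Omega_t(\log n)$.
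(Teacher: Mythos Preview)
Your proof is correct and uses the same graph as the paper (the cocktail party graph $K_n\setminus M$, which is the complete multipartite graph with $n/2$ parts of size~$2$). For the fractional upper bound the paper quotes the LP discussion in Section~\ref{sec:LP} to get $\coveroptfrac{t}{G_n}=2^t$, while you give the explicit uniform weighting of the $2^{n/2}$ maximum cliques; these amount to the same thing.

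The genuine difference is in the integer lower bound. The paper first reduces (WLOG) to covers by \emph{maximal} cliques, encodes each by a binary vector in $\{0,1\}^{n/2}$, and uses a halving argument: any $k$ such vectors are simultaneously constant on at least $\lfloor n/2^k\rfloor$ coordinates, so if $k\le\lfloor\log_2(n/t)\rfloor$ one finds $t$ matched pairs on which the cover realises only two of the $2^t$ transversals. This yields $\coveropt{t}{G_n}>\lfloor\log_2(n/t)\rfloor$. Your route instead keeps arbitrary cliques, encodes pairs by ternary words in $\{u,v,\ast\}^k$, reduces the $t$-clique covering condition to a pairwise one via the clique-extension step (this is exactly where $n\ge 2t$ is used), and then counts orbits under the $u\leftrightarrow v$ swap to get $k\ge\log_3(n+1)$. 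The paper's bound has the better base ($\log_2$ versus $\log_3$), but your argument avoids the WLOG reduction to maximal cliques and replaces the inductive halving by a one-line orbit count; both deliver the required $\Omega_t(\log n)$.
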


\begin{proof}
Let $t \geq 2$ and $n$ be given such that $n$ is even and $n \geq 2t$. Let $G$ be a complete multipartite graph with $n/2$ classes of size $2$ each. By the discussion on~\eqref{LP:cover} and~\eqref{LP:coverdual}, when $c_i = 1$ for all $i$, the optimal value of~\eqref{LP:cover} is $\prod_{i=1}^t x_i$ (see the penultimate paragraph of Section~\ref{sec:LP}). Thus we have $\coveroptfrac{t}{G} = 2^t$. We claim that $\coveropt{t}{G} > \floor{\log_2(n/t)}$.

Let $\{v_i,u_i\}$ for $i \in [n/2]$ be the parts of $G$, and let $f$ be an optimal $t$-clique cover of $G$, i.e., $\norm{f} = \coveropt{t}{G}$, and assume the support of $f$ consists only of maximal cliques of $G$. For each maximal clique $K \in \maximalcliques{G}$ we associate a binary vector $x$ of length $n/2$ such that
\begin{align*}
x_i = \begin{cases}
0 \quad\quad\quad \text{ if } u_i \in K,\\
1 \quad\quad\quad  \text{ if } v_i \in K.
\end{cases}
\end{align*}
By induction, for every $k \ge 1$, for every set $B$ of $k$ binary vectors of length $n/2$, there exists a set of coordinates $L_k$  of size at least $\floor{n/2^{k}}$  such that for every   $x \in B$, every coordinate of  $x$ is the same on $L_k$ (but for $x\ne y \in L_k$ they could be different).
Let $k = \floor{\log_2(n/t)}$ so that $n/2^{k} \ge t$.

Let $B$ be the set of binary vectors associated to the support of $f$. If $\norm{f} = |B| \leq k$, then there exists $t$ coordinates such that for every  $x \in B$, on those $t$ coordinates $x$ has the same value. With other words,  there is a set of $t$ classes of $G$ on which the support of $f$ covers only two cliques. Since $t \geq 2$, this implies that there is a $t$-clique not covered by $f$, and hence $\coveropt{t}{G} = \norm{f} > k = \floor{\log_2(n/t)}$.
\end{proof}

The gap exhibited in Proposition~\ref{prop:gap} is best possible up to the constant factor. To see this, we can interpret the $t$-clique cover problem as a set cover problem in the following way. We let the ground set be $\tcliques{t}{G}$ and let our sets consist of $\{ T \in \tcliques{t}{G} : T \subseteq K \}$ for each $K \in \cliques{G}$. The classical greedy algorithm of Chv\'{a}tal \cite{C} for the set cover problem shows $\coveropt{t}{G}/\coveroptfrac{t}{G} = O(\log(|\tcliques{t}{G}|))$. As $|\tcliques{t}{G}| \le \binom{n}{t}$ for all $n$-vertex graphs $G$, we conclude $\coveropt{t}{G}/\coveroptfrac{t}{G} = O_t(\log(n))$.

To show that the worst additive gap is at least $n^{2-o(1)}$, we make use of Proposition~\ref{prop:gap} and a strengthening of a classical result of Erd\H{o}s, Frankl, and R\"{o}dl \cite{EFR} due to Mubayi and Verstra\"ete \cite{MV}.

\begin{theorem}[Mubayi and Verstra\"ete  \cite{MV}]\label{thm:EFR}
    For every $R,N \ge 3$ with $N \ge R \ge \log N$, there exists an $N$-vertex $R$-uniform hypergraph $\cH$ with the following properties:
    \begin{enumerate}[(i)]
        \item $|E(\cH)| \ge N^2/R^{8\sqrt{\log_R N}}$.
        \item $\cH$ is linear, i.e., for every distinct edges $e,f \in \cH$ we have $|e \cap f| \le 1$.
        \item $\cH$ is triangle-free, that is, for every three distinct edges, $e,f,g \in \cH$, if $|e \cap f| = |f \cap g| = |g \cap e| = 1$, then $|e \cap f \cap g| = 1$.
    \end{enumerate}
\end{theorem}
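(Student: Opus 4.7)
My plan is to adapt the Behrend--Ruzsa--Szemer\'edi construction to the $R$-uniform setting, following the strategy pioneered by Erd\H{o}s--Frankl--R\"odl. The starting point is a Behrend-type set $B \subseteq [m]$ of size $|B| \ge m \cdot 2^{-C\sqrt{\log m}}$ containing no nontrivial $3$-term arithmetic progression. Setting $m = \lfloor N/R \rfloor$, I would partition the vertex set as $V(\mathcal{H}) = V_0 \sqcup V_1 \sqcup \cdots \sqcup V_{R-1}$ with each $V_i$ identified with $[m]$, and work inside $\mathbb{Z}_p$ for a prime $p \ge 3Rm$ to avoid wrap-around issues. For each pair $(x,d) \in \mathbb{Z}_p \times B$ I would include the hyperedge
\[ e_{x,d} = \{(x + id \bmod p,\, i) : 0 \le i \le R-1\}, \]
treating the second coordinate as the part label.

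For property (i), a positive fraction of pairs produces a valid edge, so $|E(\mathcal{H})| = \Omega(m \cdot |B|) = \Omega(m^2 \cdot 2^{-C\sqrt{\log m}})$. Substituting $m = N/R$, the target bound $N^2/R^{8\sqrt{\log_R N}} = N^2 / 2^{8\sqrt{\log N \log R}}$ reduces to showing $2\log R + C\sqrt{\log(N/R)} \lesssim 8\sqrt{\log N \log R}$, which holds uniformly under the hypothesis $\log N \le R \le N$. Property (ii), linearity, follows immediately: if $e_{x_1,d_1}$ and $e_{x_2,d_2}$ share vertices in distinct parts $V_i, V_j$, then $(j-i)(d_1-d_2) \equiv 0 \pmod p$ with $|j-i| < R < p$, forcing $d_1 = d_2$ and then $x_1 = x_2$.

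The technical heart is property (iii). Suppose three edges $e_{x_s,d_s}$, $s=1,2,3$, pairwise share exactly one vertex each, with the shared vertices lying in parts $V_{a_{12}}, V_{a_{13}}, V_{a_{23}}$. Writing out the three identifications and eliminating the $x_s$ yields a linear relation
\[ (a_{13} - a_{12})d_1 + (a_{12} - a_{23})d_2 + (a_{23} - a_{13})d_3 \equiv 0 \pmod p, \]
whose coefficients sum to zero. The aim is to argue that this dependency, combined with the $3$-AP-freeness of $B$, forces $d_1 = d_2 = d_3$; then the argument of (ii) collapses the three edges to a single one, contradicting the triangle hypothesis. Making this work in the ``plain'' Behrend construction only handles the case where $a_{12}, a_{13}, a_{23}$ are in arithmetic progression, so I would replace $B$ by a higher-dimensional Behrend-type set embedded in a product group, ensuring that every admissible linear dependency among three differences is precluded.

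The main obstacle, and the source of the improvement over the original Erd\H{o}s--Frankl--R\"odl bound, is precisely this last step: the naive construction is too weak to rule out triangles whose shared vertices lie in non-AP positions, and forcing the correct exponent $8$ in $R^{8\sqrt{\log_R N}}$ requires a simultaneous optimization of the ambient dimension of the Behrend set, the modulus $p$, and the density of $B$. Handling the full range $R \ge \log N$ uniformly (rather than just small $R$, where the original Ruzsa--Szemer\'edi argument already suffices) is the reason the constant becomes explicit and the parameter balance delicate.
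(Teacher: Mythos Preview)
This theorem is not proved in the paper. It is quoted verbatim as a result of Mubayi and Verstra\"ete~\cite{MV} and used as a black box in the proofs of Theorems~\ref{thm:additive_lower_bound} and the subsequent packing lower bound. There is therefore no proof in the paper to compare your proposal against.

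For what it is worth, your outline follows the expected Behrend--Ruzsa--Szemer\'edi template, and the linearity verification in (ii) is correct. But your own sketch already identifies the real difficulty: the relation $(a_{13}-a_{12})d_1 + (a_{12}-a_{23})d_2 + (a_{23}-a_{13})d_3 \equiv 0$ is \emph{not} a $3$-AP condition on the $d_s$ unless the $a_{ij}$ are themselves in arithmetic progression, and your proposed fix---passing to a ``higher-dimensional Behrend-type set'' that kills all such coefficient patterns simultaneously---is asserted rather than carried out. That step is where the entire strength of the bound lives, and as written the proposal is a plan, not a proof. If you want to fill this in, you should consult \cite{MV} directly rather than this paper.
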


Now we are ready to present our graph with large additive gap.

\begin{theorem}\label{thm:additive_lower_bound}
  For every $t\ge 2$ there is  some constant $c > 0$ such that  for every $n \ge e^4+t$ there exists a graph $G_n$ such that
    \[ \coveropt{t}{G_n} - \coveroptfrac{t}{G_n} > \frac{c \log\log n}{\left(\log n\right)^{8 \sqrt{\frac{\log n}{\log \log n}}}} n^2=n^{2-o(1)}.\]
    
\end{theorem}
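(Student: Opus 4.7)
The plan is to amplify the $\Theta_t(\log n)$ multiplicative gap of Proposition~\ref{prop:gap} into a large additive gap by placing one copy of the Scheinerman--Trenk construction on each edge of a carefully chosen linear triangle-free hypergraph supplied by Theorem~\ref{thm:EFR}. Concretely, set $N = \lfloor n/2 \rfloor$ and $R = \lceil \log N \rceil$, let $\cH$ be the $N$-vertex, $R$-uniform, linear, triangle-free hypergraph from Theorem~\ref{thm:EFR} with $|E(\cH)| \ge N^2/R^{8\sqrt{\log_R N}}$, and take $V(G_n) = V(\cH) \times [2]$ (padded with at most one isolated vertex). For each hyperedge $e = \{v_1, \dots, v_R\} \in E(\cH)$, put on $\bigcup_{i=1}^R (\{v_i\} \times [2])$ the complete $R$-partite graph $G_e$ with parts $\{v_i\} \times [2]$, which is the Scheinerman--Trenk gadget of Proposition~\ref{prop:gap}. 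Equivalently, $(u,i)$ and $(v,j)$ are adjacent in $G_n$ iff $u \ne v$ and $\{u,v\}$ is contained in some edge of $\cH$.

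The crux of the proof will be a clique-separation lemma: for every $t \ge 2$, every $t$-clique of $G_n$ is contained in $G_e$ for a unique $e \in E(\cH)$. Uniqueness is immediate from linearity, since any two adjacent vertices of $G_n$ project to two distinct vertices of $\cH$ that span a unique hyperedge. For containment, given a clique $K$ with vertex projection $\pi(K) \subseteq V(\cH)$ of size at least three, any three distinct $u, v, w \in \pi(K)$ generate hyperedges $e_{uv}, e_{vw}, e_{uw}$. If these were all distinct they would pairwise meet in the singletons $\{v\}, \{w\}, \{u\}$ by linearity, contradicting triangle-freeness (which would force a common vertex). Hence all pairs in $\pi(K)$ lie in a common hyperedge $e$, and so $K \subseteq V(G_e)$.

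Given this lemma, both the integer and fractional $t$-clique cover problems on $G_n$ decompose as independent subproblems on the $G_e$:
\[ \coveropt{t}{G_n} = \sum_{e \in E(\cH)} \coveropt{t}{G_e}, \qquad \coveroptfrac{t}{G_n} = \sum_{e \in E(\cH)} \coveroptfrac{t}{G_e}. \]
Each $G_e$ has $2R$ vertices, so by (the proof of) Proposition~\ref{prop:gap}, $\coveroptfrac{t}{G_e} = 2^t$ and $\coveropt{t}{G_e} > \lfloor \log_2(2R/t) \rfloor$, giving $\coveropt{t}{G_e} - \coveroptfrac{t}{G_e} \ge c'_t \log R$ once $R$ is large enough in terms of $t$. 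Summing over $|E(\cH)|$ edges and plugging in $R = \lceil \log N \rceil$, $N = \lfloor n/2 \rfloor$ yields an additive gap of
\[ \Omega_t\!\left( \frac{\log\log n}{(\log n)^{8\sqrt{\log n/\log\log n}}}\, n^2 \right), \]
as required; the finitely many small values $n \ge e^4 + t$ not yet covered can be handled by shrinking $c$. I expect the main obstacle to be the clique-separation lemma, whose proof relies delicately on the simultaneous use of linearity and triangle-freeness of $\cH$.
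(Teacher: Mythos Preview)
Your proposal is correct and follows essentially the same strategy as the paper: amplify the $\Theta_t(\log n)$ multiplicative gap of Proposition~\ref{prop:gap} by placing a copy of the Scheinerman--Trenk gadget on each edge of the linear triangle-free hypergraph from Theorem~\ref{thm:EFR}, and use linearity plus triangle-freeness to ensure that no clique of the resulting graph can span two gadgets, so that both the integer and fractional cover problems decompose over the hyperedges. The one difference is that the paper first reduces to $t=2$ (by adjoining a $(t-2)$-clique completely to the $t=2$ construction) and places the gadget directly on the $R$ hyperedge vertices, whereas you handle general $t$ in one shot by blowing up each hypergraph vertex into a pair so that each hyperedge carries the full $2R$-vertex gadget; your route is arguably cleaner since it avoids the reduction step, but the two constructions are equivalent up to this blow-up and yield the same bound.
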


\begin{proof} First, observe that it is sufficient to prove the statement for $t=2$, as such an example is given on $n-t+2$ vertices, we can add a clique on $t-2$ vertices, joined with a complete bipartite graph to the rest of the  $n-t+2$ vertices. Hence, we assume now that $t=2$.
    As $\log n \ge 4$, by Proposition~\ref{prop:gap}, there exists a graph $H_n$ on $\ceil{\log n}$ vertices  and a constant $c' > 0$, such that $\coveropt{2}{H_n} \ge c' \log\log n \cdot  \coveroptfrac{2}{H_n}$. Let $\cH$ be a hypergraph as in Theorem~\ref{thm:EFR}, with $N = n$ and $R = \ceil{\log n}$. Define the graph
    $G_n = \bigcup_{S \in E(\cH)} H_S$, where for each $S \in E(\cH)$, $H_S$ is an isomorphic copy of $H$ such that $V(H_S) = S$.

    The linearity of  $\cH$ implies that  for every $e \in E(G_n)$ we have  $e \in E(H_S)$ for exactly one $S \in E(\cH)$. As $\cH$ is triangle-free, for all cliques $Q \subseteq G_n$, $Q \subseteq H_S$ for exactly one $S \in E(\cH)$. It follows that $\coveropt{2}{G_n} = |E(\cH)| \coveropt{2}{H}$ and $\coveroptfrac{2}{G_n} = |E(\cH)| \coveroptfrac{2}{H}$. As $|E(\cH)| \ge n^2/R^{8\sqrt{\log_R n}}$, the claim follows.
\end{proof}

\subsection{Packings}

In general, decomposition problems can be relaxed in two directions: covering, as we have addressed up to now, and packing. For a graph $G$, and a (possibly infinite) family of graphs $\mathcal{F}$, let $\mathcal{F}(G)$ be the collection of copies of elements of $\mathcal{F}$ in $G$. The $\mathcal{F}$-packing number, denoted $\nu_{\mathcal{F}}(G)$ is the maximum number of pairwise edge-disjoint elements of $\mathcal{F}(G)$. The fractional packing number $\nu^*_{\mathcal{F}}(G)$ is the maximum $\norm{w}$ where $w : \mathcal{F}(G) \to [0,1]$ such that
\[ \sum_{\substack{e\in H \in \mathcal{F}(G) }} w(H) \le 1\]
for every $e \in G$. Yuster \cite{Y} strengthened an earlier result of Haxell and R\"{o}dl \cite{HR} giving a bound on the additive error of $\nu^*_{\mathcal{F}}(G) - \nu_{\mathcal{F}}(G)$. 
\begin{theorem}\label{thm:Yuster}
    If $\mathcal{F}$ is a fixed family of graphs and $G$ is a graph with $n$ vertices, then $\nu^*_{\mathcal{F}}(G) - \nu_{\mathcal{F}}(G) = o(n^2)$. 
\end{theorem}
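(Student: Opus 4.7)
The plan is to adapt the proof of Theorem~\ref{thm:asymptotic_bound_decomposition} to the edge-disjoint packing setting. Let $h$ denote the maximum number of vertices of any $H \in \mathcal{F}$, which is finite since $\mathcal{F}$ is fixed.

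First, given $\rho > 0$, apply Szemer\'edi's regularity lemma (Theorem~\ref{thm:regular}) and the cleaning procedure (Lemma~\ref{lem:cleaning}) to $G$ with sufficiently small parameters to obtain an $(n,\ell,\varepsilon,d)$-subgraph $\tilde{G}$ of $G$ with $|E(G) \setminus E(\tilde{G})| \leq \rho n^2/4$. Every integer $\mathcal{F}$-packing of $\tilde{G}$ is also a packing of $G$, so $\nu_{\mathcal{F}}(\tilde{G}) \leq \nu_{\mathcal{F}}(G)$. Conversely, starting from an optimal fractional packing of $G$ and zeroing out all copies that use a deleted edge loses at most $|E(G) \setminus E(\tilde{G})|$ in total weight (the packing constraint bounds the weight on each deleted edge by $1$), hence $\nu^*_{\mathcal{F}}(G) \leq \nu^*_{\mathcal{F}}(\tilde{G}) + \rho n^2/4$. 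It therefore suffices to show $\nu^*_{\mathcal{F}}(\tilde{G}) - \nu_{\mathcal{F}}(\tilde{G}) \leq \rho n^2/2$.

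Take an optimal fractional packing $w^*$ of $\tilde{G}$. Each copy in its support lies in some \emph{pattern} $(H,\phi)$, where $H \in \mathcal{F}$ and $\phi : V(H) \to [\ell]$ is an injection sending each edge of $H$ to an edge of the reduced graph $R$ of $\tilde{G}$. Let $W_{H,\phi}$ be the total $w^*$-weight on copies following $(H,\phi)$, so $\sum_{(H,\phi)} W_{H,\phi} = \nu^*_{\mathcal{F}}(\tilde{G})$. Mirroring Lemma~\ref{lem:frac_to_int}, for each edge $e$ of $\tilde{G}$ lying in a regular pair $(V_i,V_j)$ and each pattern $(H,\phi)$ with $ij \in \phi(E(H))$, assign $e$ to pattern $(H,\phi)$ with probability equal to the local fractional weight $\sum_{C \ni e,\, C \text{ follows } (H,\phi)} w^*(C)$ (these probabilities sum to at most one by the packing constraint; the leftover goes to an ``unused'' bin). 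By a Chernoff bound and the counting lemma (Lemma~\ref{lem:regularity_counting}), with high probability the bipartite graph dedicated to pattern $(H,\phi)$ on each required regular pair inherits a regular structure of nearly the prescribed density.

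For each pattern $(H,\phi)$ with $W_{H,\phi} \geq \alpha_0 n^2$, apply the Frankl-R\"odl nibble (Theorem~\ref{thm:nibble}) to the $|E(H)|$-uniform hypergraph whose vertices are the edges of the pattern's dedicated subgraph and whose hyperedges are the edge sets of copies of $H$ in $\tilde{G}$ following $\phi$. As in the proof of Lemma~\ref{lem:random_nibble}, the degree and codegree conditions of Theorem~\ref{thm:nibble} follow from the counting lemma applied to the dedicated regular structure, yielding an edge-disjoint family of copies of size $(1-o(1)) W_{H,\phi}$; matchings from distinct patterns are automatically edge-disjoint since each edge of $\tilde{G}$ is assigned to at most one pattern. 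Patterns with $W_{H,\phi} < \alpha_0 n^2$ are discarded, contributing at most $\binom{\ell}{\leq h} \alpha_0 n^2 \leq \rho n^2/4$ once $\alpha_0$ is sufficiently small in terms of $\ell$ and $h$.

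The main obstacle is the quantifier ordering highlighted in the remark at the end of Section~\ref{sec:asymptotic:frac_to_int}: the threshold $\alpha_0$ must be allowed to shrink with $\ell$ (which itself depends on $\varepsilon$), while the nibble parameters must remain uniform in $n$. This is handled exactly as in Lemma~\ref{lem:random_nibble}, choosing parameters in the order $\rho \to \varepsilon, d, m_0 \to \ell \to \alpha_0 \to n$. A secondary difficulty is verifying, simultaneously over the $\binom{\ell}{\leq h}$ patterns, that the random edge allocation preserves $\varepsilon$-regularity of the dedicated pairs --- this is a standard union-bound Chernoff computation, made possible because the number of patterns and regular pairs is bounded in $n$.
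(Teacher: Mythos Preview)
The paper does not give its own proof of Theorem~\ref{thm:Yuster}; it is quoted as a result of Yuster~\cite{Y} (strengthening Haxell--R\"odl~\cite{HR}) and is stated without proof in Section~6.3. There is therefore nothing in the paper to compare your proposal against. Your sketch is in the spirit of Yuster's actual argument and of the machinery developed in Sections~\ref{sec:asymptotic:prelim}--\ref{sec:asymptotic:frac_to_int} of this paper, so as a reconstruction of the cited result it is reasonable.

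Two small points if you intend this as a self-contained proof. First, your claim that every copy in the support of $w^*$ follows a pattern $(H,\phi)$ with $\phi$ an \emph{injection} into $[\ell]$ is not literally true: non-adjacent vertices of $H$ can land in the same part $V_i$ of the cleaned graph, since the $V_i$ are only required to be independent sets. The contribution of such non-injective copies is $o(n^2)$ once $m_0$ is large (or one re-cleans within parts), but this step needs to be said. Second, ``$\mathcal{F}$ is fixed'' does not by itself guarantee that $h = \max_{H \in \mathcal{F}} |V(H)|$ is finite; Yuster's theorem assumes the graphs in $\mathcal{F}$ have bounded order, and you should state this hypothesis explicitly rather than infer it from the word ``fixed''.
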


Yuster also  gave a lower bound for the additive error by a probabilistic construction in a similar strategy to our proof of Theorem~\ref{thm:additive_lower_bound}.

\begin{proposition}[Yuster \cite{Y}]\label{prop:packing_additive_error_lower_bound}
    For every $\ep > 0$, there exists $t = t(\ep)$ and $N = N(\ep)$ such that for all $n > N$, there exists a graph $G$ with $n$ vertices for which $\nu_{K_t}^*(G) - \nu_{K_t}(G) > n^{2 - \ep}$. 
\end{proposition}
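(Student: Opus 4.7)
The plan is to adapt the template of Theorem~\ref{thm:additive_lower_bound} to the packing setting: exhibit a small graph with a positive additive packing gap, and paste many edge-disjoint copies of it together via a sparse linear triangle-free hypergraph. The natural local gadget for $t \ge 3$ is $H = K_{t+1}$ (the case $t=2$ is trivial since matchings are integral). Deleting one vertex of $K_{t+1}$ produces exactly $t+1$ copies of $K_t$; each edge of $K_{t+1}$ lies in exactly $t-1$ of them, and any two such copies share $t-1 \ge 2$ vertices and hence an edge. Hence $\nu_{K_t}(K_{t+1}) = 1$, while assigning weight $1/(t-1)$ to every $K_t$ subgraph yields a feasible fractional packing of total weight $(t+1)/(t-1)$, so $\nu^*_{K_t}(K_{t+1}) - \nu_{K_t}(K_{t+1}) = 2/(t-1) > 0$.

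For the global construction, set $t = t(\ep) \ge 3$ and select a $(t+1)$-uniform linear triangle-free hypergraph $\cH$ on $n$ vertices with $|E(\cH)| \ge n^{2-o(1)}$. Such an $\cH$ is delivered by Theorem~\ref{thm:EFR} with $R = t+1$ (permitting $t$ to grow mildly with $n$), and more generally by Ruzsa--Szemer\'edi-type constructions from Behrend-like progression-free sets, which yield $n^{2-o(1)}$ edges for any fixed $R \ge 3$. Define
\[ G \;:=\; \bigcup_{S \in E(\cH)} K_{t+1}[S], \]
placing a copy of $K_{t+1}$ on each hyperedge. Linearity of $\cH$ makes these copies pairwise edge-disjoint, and linearity together with triangle-freeness forces every $K_t$ of $G$ to lie inside a single $K_{t+1}[S]$: otherwise three of its edges would belong to three distinct hyperedges pairwise sharing single vertices, which is precisely a linear triangle in $\cH$ --- the same argument used in the proof of Theorem~\ref{thm:additive_lower_bound}.

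Since both $\nu^*_{K_t}$ and $\nu_{K_t}$ are additive over the edge-disjoint pieces $K_{t+1}[S]$ (because the family of $K_t$-copies of $G$ partitions accordingly), we obtain
\[ \nu^*_{K_t}(G) - \nu_{K_t}(G) \;=\; |E(\cH)|\cdot\bigl(\nu^*_{K_t}(K_{t+1}) - \nu_{K_t}(K_{t+1})\bigr) \;=\; \frac{2\,|E(\cH)|}{t-1}, \]
which exceeds $n^{2-\ep}$ once $n$ is large enough in terms of $\ep$. The one delicate ingredient is the existence of a $(t+1)$-uniform linear triangle-free hypergraph with $n^{2-o(1)}$ edges for the chosen $t$ and arbitrarily large $n$; granted this, the remainder is a short packing computation on $K_{t+1}$ combined with the linearity/triangle-freeness accounting of the pasted copies.
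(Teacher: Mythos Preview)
The paper does not prove this proposition itself: it is quoted from Yuster, who (the paper remarks) used a probabilistic construction. What the paper \emph{does} prove is the stronger theorem immediately following, valid for every fixed $t \ge 2$, and your argument is essentially that proof specialised to a single copy of $K_{t+1}$ per hyperedge rather than several.

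The only substantive wrinkle is the hypergraph ingredient. Theorem~\ref{thm:EFR} as stated in the paper requires $R \ge \log N$, so it does not directly deliver a $(t+1)$-uniform system for fixed $t$; and your parenthetical ``permitting $t$ to grow mildly with $n$'' conflicts with the quantifier order in the proposition, where $t = t(\ep)$ must be chosen before $n$. Your fallback to Ruzsa--Szemer\'edi/Behrend constructions for fixed uniformity $R$ is correct and standard, though it is an appeal outside what the paper states. The paper's proof of the stronger theorem sidesteps this cleanly by taking $R$ to be the least multiple of $t+1$ exceeding $\log n$ and planting $R/(t+1)$ vertex-disjoint copies of $K_{t+1}$ on each hyperedge; then Theorem~\ref{thm:EFR} applies verbatim, and the per-hyperedge gap is simply multiplied by $R/(t+1)$. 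Your route buys a slightly simpler gadget at the price of needing an external hypergraph result; the paper's route stays self-contained at the price of a two-line modification of the gadget.

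One minor aside: the $t=2$ case is not ``trivial since matchings are integral'' --- fractional matchings do beat integer ones on odd components --- but it \emph{is} true that the edge-matching gap is $O(n)$ and so cannot reach $n^{2-\ep}$, so your restriction to $t \ge 3$ is harmless.
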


Yuster remarked that there is  an infinite family of graphs such that $\nu_{K_3}^*(G) - \nu_{K_3}(G) = \Theta(n^{1.5})$ and asked if $\nu_{K_3}^*(G) - \nu_{K_3}(G) = O(n^{1.5})$ for all graphs $G$.  We  strengthen Proposition~\ref{prop:packing_additive_error_lower_bound} utilizing Theorem~\ref{thm:EFR}, and show that the integrality gap for packing triangles is even larger.

\begin{theorem}
    For all integers $t \ge 2$ and  $n \ge e^{t+1}$, there exists a graph $G_n$ such that
    \[ \nu^*_{K_t}(G_n) - \nu_{K_t}(G_n) > \frac{c n^2}{\left(\log n\right)^{8 \sqrt{\frac{\log n}{\log \log n}}}}\]
    for some constant $c = c(t) > 0$.
\end{theorem}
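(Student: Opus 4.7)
The plan is to mirror the proof of Theorem~\ref{thm:additive_lower_bound}: exhibit a small graph $H$ on roughly $\log n$ vertices with a constant additive $K_t$-packing integrality gap, then blow $H$ up along the edges of a linear, triangle-free hypergraph from Theorem~\ref{thm:EFR} to amplify the constant gap to $n^{2-o(1)}$. (Note that for $t=2$ every $K_2$-packing is simply a set of edges, so $\nu_{K_2}^*(G)=\nu_{K_2}(G)=|E(G)|$ for every graph $G$; thus the interesting range is $t\geq 3$, which is what I treat.)

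For the local gadget I would take $H$ to be the disjoint union of a copy of $K_{t+1}$ and $R-(t+1)$ isolated vertices on a vertex set of size $R:=\lceil\log n\rceil$; the hypothesis $n\geq e^{t+1}$ ensures $R\geq t+1$. Any two copies of $K_t$ inside $K_{t+1}$ share $t-1\geq 2$ vertices and hence at least one edge, so $\nu_{K_t}(H)=1$. On the other hand, assigning weight $1/(t-1)$ to each of the $t+1$ copies of $K_t$ is a feasible fractional packing, since every edge of $K_{t+1}$ lies in exactly $t-1$ of these $K_t$'s, giving $\nu_{K_t}^*(H)\geq (t+1)/(t-1)$ and therefore
\[ \nu_{K_t}^*(H)-\nu_{K_t}(H)\ \geq\ \frac{2}{t-1}. \]

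Next I would apply Theorem~\ref{thm:EFR} with $N=n$ and $R=\lceil\log n\rceil$ to obtain an $n$-vertex, $R$-uniform, linear, triangle-free hypergraph $\mathcal{H}$ with $|E(\mathcal{H})|\geq n^2/R^{8\sqrt{\log_R n}}$, and then define $G_n=\bigcup_{S\in E(\mathcal{H})}H_S$, where each $H_S$ is a copy of $H$ with $V(H_S)=S$ (the placement of the $K_{t+1}$ inside $S$ being arbitrary). Exactly as in the proof of Theorem~\ref{thm:additive_lower_bound}, linearity of $\mathcal{H}$ ensures that every edge of $G_n$ lies in a unique $H_S$, while linearity together with triangle-freeness forces every clique of $G_n$ to be contained in a single $H_S$; in particular every copy of $K_t$ in $G_n$ belongs to exactly one $H_S$.

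The proof is completed by observing that both the integer and the fractional $K_t$-packing LPs decouple across the $H_S$'s: the variables (copies of $K_t$ in $G_n$) and the constraints (edges of $G_n$) each partition according to which $H_S$ they lie in, so the $K_t$-packing LP on $G_n$ is literally the disjoint sum of the $K_t$-packing LPs on the individual $H_S$'s. Hence $\nu_{K_t}(G_n)=|E(\mathcal{H})|\cdot\nu_{K_t}(H)$ and $\nu_{K_t}^*(G_n)=|E(\mathcal{H})|\cdot\nu_{K_t}^*(H)$, and substituting $\log_R n=\log n/\log\log n$ yields
\[ \nu_{K_t}^*(G_n)-\nu_{K_t}(G_n)\ \geq\ \frac{2}{t-1}\cdot\frac{n^2}{(\log n)^{8\sqrt{\log n/\log\log n}}}, \]
which is the desired bound with $c(t)=2/(t-1)$. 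There is no real obstacle: Theorem~\ref{thm:EFR} does all the heavy lifting, and the only minor subtlety is the LP decoupling, which follows immediately from the fact that neither edges nor copies of $K_t$ straddle two distinct $H_S$'s.
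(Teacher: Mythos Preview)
Your proof is correct and follows essentially the same strategy as the paper: place a gadget with a constant additive $K_t$-packing gap on each hyperedge of the linear triangle-free hypergraph from Theorem~\ref{thm:EFR}, and use linearity plus triangle-freeness to decouple the packing LPs. The only difference is in the gadget: the paper uses $R/(t+1)$ vertex-disjoint copies of $K_{t+1}$ (taking $R$ to be the least multiple of $t+1$ exceeding $\log n$), giving a per-hyperedge gap of order $\log n$ rather than your constant $2/(t-1)$; this extra $\log n$ factor is absorbed by the denominator anyway, so both yield the stated bound. Your observation that the statement is vacuous for $t=2$ is also correct.
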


\begin{proof}
     Let $\cH$ be a hypergraph as in Theorem~\ref{thm:EFR}, with $N = n$ and $R$ be the smallest integer greater than $\log n$ such that $t+1$ divides $R$. Let $H$ consist of the graph of $R/(t+1)$ pairwise vertex disjoint copies of $K_{t+1}$.
     Define the graph $G_n = \bigcup_{S \in E(\cH)} H_S$ where for all $S \in E(\cH)$, $H_S$ is an isomorphic copy of $H$ and $V(H_S) = S$. Note that $$\nu_{K_t}^*(H_S) = \frac{t+1}{t-1}\cdot \frac{R}{t+1}  = \frac{t+1}{t-1}\nu_{K_t}(H_S) .$$
     As $\cH$ is linear, for all $e \in E(G_n)$, we have that $e \in E(H_S)$ for exactly one $S \in E(\cH)$. Furthermore, as $\cH$ is triangle-free, all cliques of size $t+1$ of $G$ are contained in exactly one $S \in E(\cH)$. As  $\nu_{K_t}(G_n) = |E(\cH)| \nu_{K_t}(H)$, $\nu^*_{K_t}(G_n) = |E(\cH)| \nu^*_{K_t}(H)$, and $|E(\cH)| \ge n^2/R^{8\sqrt{\log_R n}}$, the claim follows.
\end{proof}

\section{Acknowledgments}
    This work was partially done when 
    three of the authors (JB, JH, MCW)
     visited the ECOPRO group at the Institute for Basic Science (IBS),
      where the project was initiated. We are very grateful for the kind hospitality of IBS and other visitors who participated in
fruitful discussions at the beginning of this project.

\end{document}